\renewcommand{\a}{\mathfrak{a}}
\newcommand{\n}{\mathfrak{n}}
\renewcommand{\v}{\mathfrak{v}}
\newcommand{\w}{\mathfrak{w}}
\newcommand{\x}{{\rm x}}
\newcommand{\C}{\mathbb{C}}
\newcommand{\N}{\mathbb{N}}
\newcommand{\R}{\mathbb{R}}
\newcommand{\boC}{\mathcal{C}}
\newcommand{\boE}{\mathcal{E}}
\newcommand{\boF}{\mathcal{F}}
\newcommand{\boM}{\mathcal{M}}
\newcommand{\boN}{\mathcal{N}}
\newcommand{\boO}{\mathcal{O}}
\newcommand{\boP}{\mathcal{P}}
\newcommand{\boR}{\mathcal{R}}
\newcommand{\eps}{\varepsilon}
\newcommand{\ch}{{\rm ch}}
\renewcommand{\Im}{{\rm Im}}
\newtheorem{cor}{Corollary}
\newtheorem{lemma}{Lemma}
\newtheorem{prop}{Proposition}
\newtheorem{step}{Step}
\newtheorem{theorem}{Theorem}
\theoremstyle{definition}
\newtheorem*{merci}{Acknowledgements}
\newtheorem{remark}{Remark}
\begin{document}

\title{On the Korteweg-de Vries long-wave approximation of the Gross-Pitaevskii equation I}
\author{
\renewcommand{\thefootnote}{\arabic{footnote}}
Fabrice B\'ethuel \footnotemark[1], Philippe Gravejat \footnotemark[2], Jean-Claude Saut \footnotemark[3], Didier Smets \footnotemark[4]}
\footnotetext[1]{UPMC, Universit\'e Paris 06, UMR 7598, Laboratoire Jacques-Louis Lions, F-75005, Paris, France. E-mail: bethuel@ann.jussieu.fr}
\footnotetext[2]{Centre de Recherche en Math\'ematiques de la D\'ecision, Universit\'e Paris Dauphine, Place du Mar\'echal De Lattre De Tassigny, 75775 Paris Cedex 16, France, and \'Ecole Normale Sup\'erieure, DMA, UMR 8553, F-75005, Paris, France. E-mail: gravejat@ceremade.dauphine.fr}
\footnotetext[3]{Laboratoire de Math\'ematiques, Universit\'e Paris Sud and CNRS UMR 8628, B\^atiment 425, 91405 Orsay Cedex, France. E-mail: Jean-Claude.Saut@math.u-psud.fr}
\footnotetext[4]{UPMC, Universit\'e Paris 06, UMR 7598, Laboratoire Jacques-Louis Lions, F-75005, Paris, France, and \'Ecole Normale Sup\'erieure, DMA, UMR 8553, F-75005, Paris, France. E-mail: smets@ann.jussieu.fr}
\maketitle

\begin{abstract}
The fact that the Korteweg-de-Vries equation offers a good approximation of long-wave solutions of small amplitude to the one-dimensional Gross-Pitaevskii equation was derived several years ago in the physical literature (see e.g. \cite{KuznZak1}). In this paper, we provide a rigorous proof of this fact, and compute a precise estimate for the error term. Our proof relies on the integrability of both the equations. In particular, we give a relation between the invariants of the two equations, which, we hope, is of independent interest.
\end{abstract}

\section{Introduction}

In this paper, we consider the one-dimensional Gross-Pitaevskii equation
\renewcommand{\theequation}{GP}
\begin{equation}
\label{GP}
i \partial_t \Psi + \partial_\x^2 \Psi = \Psi (|\Psi|^2 - 1) \ {\rm on} \ \R \times \R,
\end{equation}
which is a version of the defocusing cubic nonlinear Schr\"odinger equation and appears as a relevant model in various areas of physics: Bose-Einstein condensation, fluid mechanics (see e.g. \cite{GinzPit1,Pitaevs1,Gross1,Coste1}), nonlinear optics (see e.g. \cite{KivsLut1}).

We supplement this equation with the boundary condition at infinity
\renewcommand{\theequation}{\arabic{equation}}
\setcounter{equation}{0}
\begin{equation}
\label{bdinfini}
|\Psi(\x, t)| \to 1, \ {\rm as} \ |\x| \to + \infty.
\end{equation}
This boundary condition is suggested by the formal conservation of the energy (see \eqref{GLE} below), and by the use of the Gross-Pitaevskii equation as a physical model, e.g. for the modelling of ``dark solitons'' in nonlinear optics (see \cite{KivsLut1}). Note that boundary condition \eqref{bdinfini} ensures that \eqref{GP} has a truly nonlinear dynamics, contrary to the case of null condition at infinity where the dynamics is governed by dispersion and scattering. In particular, \eqref{GP} has nontrivial localized coherent structures called ``solitons''.

At least on a formal level, the Gross-Pitaevskii equation is hamiltonian. The conserved Hamiltonian is a Ginzburg-Landau energy, namely
\begin{equation}
\label{GLE}
E(\Psi) = \frac{1}{2} \int_{\R} |\partial_\x \Psi|^2 + \frac{1}{4} \int_{\R} (1 - |\Psi|^2)^2 \equiv \int_{\R} e(\Psi).
\end{equation}
In this paper, we will only consider finite energy solutions to \eqref{GP}. Similarly, as far as it might be defined, the momentum
\begin{equation}
\label{VectP}
P(\Psi) = \frac{1}{2} \int_{\R} \langle i \partial_\x \Psi, \Psi \rangle
\end{equation}
is formally conserved. Another quantity which is formally conserved by the flow is the mass
\begin{equation}
\label{alamasse}
m(\Psi) = \frac{1}{2} \int_{\R} \Big( |\Psi|^2 - 1 \Big).
\end{equation}

Equation \eqref{GP} is integrable by means of the inverse scattering method, and it has been formally analyzed within this framework in \cite{ShabZak2}, and rigorously in \cite{GeraZha1}. The formalism of inverse scattering provides an infinite number of invariant functionals for the Gross-Pitaevskii equation and our proofs rely crucially on several of them. Concerning the Cauchy problem, it can be shown (see \cite{Zhidkov1,Gerard2}) that \eqref{GP} is locally well-posed in the spaces
$$X^k(\R) = \{ u \in L^1_{\rm loc}(\R, \C), \ {\rm s.t.} \ E(u) < + \infty, \ {\rm and} \ \partial_\x u \in H^{k - 1}(\R) \},$$
for any $k \geq 1$, and globally well-posed for $k = 1.$ In the one-dimensional case considered here, it is also globally well-posed for $k\geq 2$.

\begin{theorem}
\label{thm:existe}
Let $k \in \N^*$ and $\Psi_0 \in X^k(\R)$. Then, there exists a unique solution $\Psi(\cdot, t)$ in $\boC^0(\R, X^k(\R))$ to \eqref{GP} with initial data $\Psi_0$. If $\Psi_0$ belongs to $X^{k+2}(\R)$, then the map $t \mapsto \Psi(\cdot, t)$ belongs to $\boC^1(\R, X^k(\R))$ and $\boC^0(\R, X^{k+2}(\R))$. Moreover, the flow map $\Psi_0 \mapsto \Psi(\cdot, T)$ is continuous on $X^k(\R)$ for any fixed $T \in \R$.
\end{theorem}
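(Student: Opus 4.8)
The plan is to solve the Cauchy problem by a fixed-point argument on the Duhamel formulation associated with the free Schr\"odinger group $S(t) = e^{it \partial_\x^2}$, the main conceptual difficulty being that $\Psi$ does not belong to $L^2(\R)$ because of the boundary condition \eqref{bdinfini}. The structural fact that makes this tractable is that $S(t)$ commutes with $\partial_\x$ and acts as an isometry on every $H^s(\R)$. I would first check that $S(t) \Psi_0 - \Psi_0 \in H^k(\R)$ for each $t$: its derivatives of order $\geq 1$ equal $(S(t) - 1) \partial_\x^j \Psi_0$ with $\partial_\x^j \Psi_0 \in L^2(\R)$ for $1 \leq j \leq k$, while the function itself has Fourier transform $(e^{-it\xi^2} - 1) \widehat{\Psi_0}(\xi)$ and the multiplier $(e^{-it\xi^2} - 1)/\xi$ is bounded, so the finite-energy bound $\xi \widehat{\Psi_0} \in L^2$ is transferred. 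In one dimension $H^k(\R) \hookrightarrow L^\infty(\R) \cap L^4(\R)$, whence $1 - |S(t)\Psi_0|^2 \in L^2(\R)$ and $S(t) \Psi_0 \in X^k(\R)$ uniformly on bounded time intervals. I can therefore look for the solution in the form $\Psi = S(t) \Psi_0 + v$ with $v \in \boC^0([0,T], H^k(\R))$ and $v(0) = 0$ solving the Duhamel equation $v(t) = - i \int_0^t S(t-s) \big( \Psi (|\Psi|^2 - 1) \big)(s) \, ds$.

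The crux is then the nonlinear estimate for $F(\Psi) = \Psi(|\Psi|^2 - 1)$, and this is where the one-dimensional setting is used decisively. I would show that $F(\Psi) \in H^k(\R)$ with norm controlled polynomially by $\|\Psi\|_{X^k(\R)}$: the finite-energy condition together with $\partial_\x \Psi \in H^{k-1}(\R)$ gives $\Psi \in L^\infty(\R)$ and $1 - |\Psi|^2 \in H^k(\R)$ (indeed $\partial_\x(|\Psi|^2) = 2 \langle \partial_\x \Psi, \Psi \rangle \in H^{k-1}(\R)$, while $1 - |\Psi|^2 \in L^2(\R)$ by the energy bound), so that the $L^2$-integrability of $F(\Psi) = - \Psi (1 - |\Psi|^2)$ is provided by the factor $1 - |\Psi|^2$ and its higher derivatives are controlled by Moser-type product estimates valid in one dimension. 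The same computation yields a Lipschitz bound for $F$ on balls of $X^k(\R)$. Combined with the isometry property of $S(t)$, the map $v \mapsto -i \int_0^t S(t-s) F(S(s) \Psi_0 + v)(s) \, ds$ is then a contraction on a ball of $\boC^0([0,T], H^k(\R))$ for $T$ small depending only on $\|\Psi_0\|_{X^k(\R)}$, giving local existence and uniqueness.

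To globalize, I would use the conserved quantities furnished by the Hamiltonian structure. For $k = 1$, conservation of the energy $E(\Psi)$ from \eqref{GLE} controls $\|\partial_\x \Psi(t)\|_{L^2(\R)}$ and $\|1 - |\Psi(t)|^2\|_{L^2(\R)}$ uniformly in time, hence $\|\Psi(t)\|_{X^1(\R)}$, so that the local solution extends to all of $\R$. For $k \geq 2$ in one dimension, I would propagate the higher norm $\|\partial_\x \Psi(t)\|_{H^{k-1}(\R)}$ by an energy estimate: differentiating the equation, integrating by parts to remove the top-order loss, and closing a Gronwall inequality whose coefficients depend only on the already-controlled $X^1(\R)$ norm and the resulting $L^\infty(\R)$ bounds. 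This yields an a priori bound on $\|\Psi(t)\|_{X^k(\R)}$ on arbitrary time intervals and hence global existence.

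Finally, the smoothness and continuity statements follow from the same estimates. When $\Psi_0 \in X^{k+2}(\R)$, the solution lies in $\boC^0(\R, X^{k+2}(\R))$ by the construction above, so that $\partial_\x \Psi \in H^{k+1}(\R)$ and $F(\Psi) \in H^{k}(\R)$; the equation itself then reads $\partial_t \Psi = i \big( \partial_\x^2 \Psi - \Psi(|\Psi|^2 - 1) \big)$ with a right-hand side in $H^k(\R)$ depending continuously on $t$. Since $H^k(\R)$ is the model (tangent) space of the affine space $X^k(\R)$, this gives $\Psi \in \boC^1(\R, X^k(\R))$. Continuity of the flow map $\Psi_0 \mapsto \Psi(\cdot, T)$ on $X^k(\R)$ is then obtained by applying the Lipschitz estimate for $F$ to the difference of two solutions and closing a Gronwall inequality on $[0, T]$. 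The recurring obstacle throughout is the boundary condition \eqref{bdinfini}: one must consistently work in the affine space $S(t) \Psi_0 + H^k(\R)$ rather than in $H^k(\R)$ itself, and verify at each stage that both the linear and the nonlinear terms respect the finite-energy constraint, which is precisely where the one-dimensional Sobolev embeddings play an essential role.
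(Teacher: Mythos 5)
Your proposal is correct in outline, but it is considerably more self-contained than the paper's argument, which is largely built on citation. The paper does not reprove the local theory: it invokes Theorem \ref{Augalop} from \cite{Gallo3,Gerard2}, which already contains local well-posedness in $X^k(\R)$, the blow-up alternative \eqref{impossible}, persistence of $X^{k+2}(\R)$ regularity, the $\boC^1$ statement, and continuity of the flow map; the only thing actually proved in Section \ref{Gwp} is that $\| \partial_\x \Psi(\cdot, t) \|_{H^{k-1}(\R)}$ cannot blow up in finite time. That a priori bound is obtained by exactly the mechanism you use for globalization: the Duhamel inequality \eqref{grogne}, the tame product estimate of Lemma \ref{Tamise}, the uniform $L^\infty$ bound \eqref{holtz} furnished by the conserved energy (itself cited, Theorem \ref{ConsE}), and Gronwall. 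You instead reconstruct the local theory from scratch --- contraction on the Duhamel formulation in the affine space $S(t)\Psi_0 + H^k(\R)$, after checking that $S(t)$ preserves $X^k(\R)$, a fact the paper also imports from \cite{Gallo3,Gerard1} --- and then globalize by differentiating the equation rather than estimating the integral formulation, an immaterial variant of the same Gronwall argument. The paper's route buys brevity; yours buys independence from the quoted results and delivers the $\boC^1$ and flow-continuity assertions as byproducts of the construction. Two points deserve more care than your sketch gives them: first, the ``Moser-type'' estimates must be applied to functions that do not decay at infinity ($\Psi \notin L^2(\R)$), which is precisely why the paper proves Lemma \ref{Tamise} via a cutoff-and-limit argument instead of quoting standard tame estimates; your Leibniz expansion does avoid the difficulty, since every term carries either the factor $1 - |\Psi|^2 \in L^2(\R)$ or at least one derivative of $\Psi$ with all remaining factors bounded, but this should be made explicit. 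Second, conservation of $E$ for data merely in $X^1(\R)$ is not free from the Hamiltonian structure; it requires a regularization and density argument, and since your flow-map continuity is established only at the end, you should check that this approximation uses nothing beyond the local-in-time Lipschitz bounds from the contraction step, so as to avoid circularity.
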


Furthermore, the energy is conserved along the flow, as well as the momentum, at least under suitable assumptions (see e.g. \cite{BeGrSaS1}). On the other hand, the rigorous derivation of conservation of mass raises some difficulties.

If $\Psi$ does not vanish, one may write
$$\Psi = \sqrt{\rho} \exp i \varphi.$$
This leads to the hydrodynamic form of the equation, with $v = 2 \partial_\x \varphi,$
\begin{equation}
\label{HDGP}
\left\{ \begin{array}{ll} \partial_t \rho + \partial_\x (\rho v) = 0,\\
\rho (\partial_t v + v.\partial_\x v) + \partial_\x (\rho^2) = \rho \partial_\x \Big( \frac{ \partial_\x^2 \rho}{\rho} - \frac{|\partial_\x \rho|^2}{2 \rho^2} \Big). \end{array} \right.
\end{equation}
If one neglects the right-hand side of the second equation, which is often referred to as the quantum pressure, system \eqref{HDGP} yields the Euler equation for a compressible fluid, with pressure law $p(\rho) = \rho^2$. Since the right-hand side of \eqref{HDGP} contains third order derivatives, this approximation is only relevant in the long-wave limit. A rigorous derivation of this asymptotics was derived by Grenier in \cite{Grenier1} for different conditions at infinity.

Recall that linearizing the compressible Euler equation with pressure $p(\rho) = \rho^2$, around the constant solution $\rho = 1$ and $v = 0$, one obtains the system
\begin{equation}
\label{ondes}
\left\{ \begin{array}{ll} \partial_t \uprho + \partial_\x \v = 0,\\
\partial_t \v + 2 \partial_\x \uprho = 0, \end{array} \right.
\end{equation}
which is equivalent to the wave equation with speed $c_s$ given by
$$c_s^2 = 2.$$
This speed is referred as the sound speed for the Gross-Pitaevskii equation. In this setting, the wave equation \eqref{ondes} appears as an approximation of the Gross-Pitaevskii equation. As mentioned above, this amounts however to neglect the quantum pressure, coming from the dispersive properties of the Schr\"odinger equation, as well as to restrict ourselves to small long-wave data, so that the wave equation approximates the Euler equation. Rigorous mathematical evidence of this fact is provided in \cite{BetDaSm1}.

In order to specify the nature of the perturbation as well as of the long-wave asymptotics, we introduce a small parameter $0 < \eps < 1$ and set
$$\left\{ \begin{array}{ll} \rho(\x, t) = 1 + \frac{\eps}{\sqrt{2}} a_\eps(\eps \x,\eps t),\\
v(\x, t) = \eps v_\eps(\eps \x, \eps t), \end{array} \right.$$
so that system \eqref{HDGP} translates into
\begin{equation}
\label{eq:dynaslow}
\left\{ \begin{array}{ll} \partial_t a_\eps + \sqrt{2} \partial_\x v_\eps = - \eps \partial_\x (a_\eps v_\eps),\\
\partial_t v_\eps + \sqrt{2} \partial_\x a_\eps = \eps \bigg( - v_\eps \cdot \partial_\x v_\eps + 2 \partial_\x \Big(
\frac{\partial_\x^2 \sqrt{\sqrt{2} + \eps a_\eps}}{\sqrt{\sqrt{2} + \eps a_\eps}} \Big) \bigg). \end{array} \right.
\end{equation}

Specifying a result of \cite{BetDaSm1} in dimension one, we are led to

\begin{theorem}[\cite{BetDaSm1}]
\label{thm:un}
Let $s \geq 2$. There exists some positive constant $K(s)$ such that, given any initial datum $(a^0_\eps, v^0_\eps) \in H^{s + 1}(\R) \times H^s(\R)$ verifying
$$K(s) \eps \| (a^0_\eps, v^0_\eps) \|_{H^{s + 1}(\R) \times H^s(\R)} \leq 1,$$
there exists some real number
$$T_\eps \geq \frac{1}{K(s) \eps^2 \|(a^0_\eps, v^0_\eps) \|_{H^{s+1}(\R) \times H^s(\R)}},$$
such that system \eqref{eq:dynaslow} has a unique solution $(a_\eps, v_\eps) \in \boC^0([0, \eps T_\eps], H^{s+1}(\R) \times H^s(\R))$ satisfying
$$\| (a_\eps(\cdot, \eps t), v_\eps(\cdot, \eps t)) \|_{H^{s + 1}(\R) \times H^s(\R)} \leq K(s) \| (a^0_\eps, v^0_\eps) \|_{H^{s + 1}(\R) \times H^s(\R)}, \ {\rm and} \ \frac{1}{2} \leq \rho(\cdot, t) \leq 2,$$
for any $t \in [0, T_\eps]$. Let $(\a, \v)$ denote the solution of the free-wave equation
\begin{equation}
\label{eq:wamu0}
\left\{ \begin{array}{ll} \partial_t \a + \sqrt{2} \partial_\x \v = 0,\\
\partial_t \v + \sqrt{2} \partial_\x \a = 0, \end{array} \right.
\end{equation}
with initial datum $(a^0_\eps, v^0_\eps)$, then for any $0 \leq t \leq T_\eps$, we have
\begin{align*}
& \| (a_\eps, v_\eps)(\cdot, \eps t) - (\a, \v)(\cdot, \eps t) \|_{H^{s-2}(\R) \times H^{s-2}(\R)}\\
\leq K(s) \Big( \eps^2 t \| (a^0_\eps, & v^0_\eps) \|_{H^{s + 1}(\R) \times H^s(\R)}^2 + \eps^3 t \|(a^0_\eps, v^0_\eps)\|_{H^{s + 1}(\R) \times H^s(\R)} \Big).
\end{align*}
\end{theorem}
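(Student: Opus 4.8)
The plan is to treat \eqref{eq:dynaslow} as a dispersive perturbation of the free-wave system \eqref{eq:wamu0}. Setting $U_\eps = (a_\eps, v_\eps)$ and $J = \left( \begin{smallmatrix} 0 & 1 \\ 1 & 0 \end{smallmatrix} \right)$, the system reads $\partial_t U_\eps + \sqrt{2}\, J \partial_\x U_\eps = \boR_\eps(U_\eps)$ with $J$ symmetric. My first step is to expand the quantum-pressure term: with $g = \sqrt{2} + \eps a_\eps$, a direct computation yields
\[ 2 \partial_\x \Big( \frac{\partial_\x^2 \sqrt{g}}{\sqrt{g}} \Big) = \eps\, \partial_\x \big( g^{-1} \partial_\x^2 a_\eps \big) - \frac{\eps^2}{2}\, \partial_\x \big( g^{-2} (\partial_\x a_\eps)^2 \big), \]
so that $\boR_\eps = \eps\, \boR^{(2)}(U_\eps) + \eps^2\, \boR^{(d)}(U_\eps)$, where $\boR^{(2)} = \big( - \partial_\x(a_\eps v_\eps),\, - v_\eps \partial_\x v_\eps \big)$ is quadratic and loses one derivative, while $\boR^{(d)}$ carries the dispersion, loses up to three derivatives on $a_\eps$, and has coefficients that are smooth bounded functions of $\eps a_\eps$ as long as $g$ stays away from $0$. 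This accounting already foretells the two terms of the final estimate, the quadratic source entering at order $\eps$ and the dispersive source at order $\eps^2$. It also explains the asymmetric regularity $H^{s+1} \times H^s$: under the Madelung transform $\Psi = \sqrt{\rho}\, e^{i\varphi}$, a datum $(a^0_\eps, v^0_\eps) \in H^{s+1} \times H^s$ corresponds to $\Psi_0 \in X^{s+1}$, the density being one derivative smoother than the velocity.

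Second, I would obtain the local well-posedness, the lower bound for $T_\eps$, and the uniform bound from a high-order energy estimate on a functional $\boE_s(U_\eps)$ equivalent to $\|a_\eps\|_{H^{s+1}}^2 + \|v_\eps\|_{H^s}^2$. The principal part $\sqrt{2}\, J \partial_\x$ is skew-adjoint and harmless; the delicate point is the third-order term $\eps^2 \partial_\x(g^{-1} \partial_\x^2 a_\eps)$, which a crude integration by parts cannot absorb without losing a derivative. The remedy is to exploit the Hamiltonian, and in fact integrable, structure: $\boE_s$ must be chosen as a modified energy that incorporates the quantum pressure---essentially a higher conserved functional of \eqref{GP} written in hydrodynamic variables---so that the top-order dispersive contribution cancels and $\tfrac{d}{dt}\boE_s \lesssim \eps\, \boE_s^{3/2}$, the cubic right-hand side stemming solely from $\boR^{(2)}$. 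Grönwall's inequality then keeps $\boE_s(t) \le 4\, \boE_s(0)$ on a slow-time interval of length $\sim (\eps \|U^0_\eps\|)^{-1}$, which is exactly $\eps T_\eps$ with $T_\eps \gtrsim (\eps^2 \|U^0_\eps\|)^{-1}$, and provides the uniform bound. The pointwise bound $\tfrac12 \le \rho \le 2$ follows from $\tfrac{\eps}{\sqrt{2}} \|a_\eps\|_{L^\infty} \lesssim \eps \|a_\eps\|_{H^{s+1}} \lesssim \eps \|U^0_\eps\|$ and the smallness assumption $K(s)\eps \|U^0_\eps\| \le 1$. Existence and uniqueness could alternatively be imported from Theorem \ref{thm:existe} through the Madelung transform, which is licit precisely because $\rho$ remains in $[\tfrac12, 2]$.

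Third comes the error estimate. Since $(\a, \v)$ solves \eqref{eq:wamu0} with the same datum, the difference $W = U_\eps - (\a, \v)$ satisfies $\partial_t W + \sqrt{2}\, J \partial_\x W = \boR_\eps(U_\eps)$ with $W(\cdot, 0) = 0$. The wave group $S(t) = \exp(-\sqrt{2}\, t\, J \partial_\x)$ is unitary on every Sobolev space, so Duhamel's formula gives
\[ \| W(\cdot, t) \|_{H^{s-2}} \le \int_0^t \| \boR_\eps(U_\eps(\cdot, \sigma)) \|_{H^{s-2}}\, d\sigma. \]
I would then bound the two pieces with the uniform bound $M := \|U_\eps\|_{H^{s+1} \times H^s} \lesssim \|U^0_\eps\|_{H^{s+1} \times H^s}$. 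The algebra and Moser estimates in $H^{s-1}$, valid for $s \ge 2$, give $\|\eps \boR^{(2)}\|_{H^{s-2}} \lesssim \eps\, M^2$, with a one-derivative loss. For the dispersive part, the bound $\tfrac12 \le \rho \le 2$ keeps $g^{-1}$ and its derivatives under control, whence $\|\eps^2 \boR^{(d)}\|_{H^{s-2}} \lesssim \eps^2 \|a_\eps\|_{H^{s+1}} \lesssim \eps^2 M$; the three-derivative loss here is exactly what forces the error to be measured in $H^{s-2}$ rather than $H^s$. Integrating over the slow-time interval $[0, \eps t]$ produces $\eps^2 t\, M^2 + \eps^3 t\, M$, which is the claimed bound once $M$ is expressed through the initial data.

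I expect the main obstacle to be the second step, and specifically the uniform-in-$\eps$ control of $\|a_\eps\|_{H^{s+1}}$ against the third-order dispersion. A naive linear energy estimate does not close at top order---indeed the linearized flow alone would let high frequencies of $v_\eps$ feed those of $a_\eps$ with an $\eps^{-1}$ amplification---so it is only the full nonlinear, Hamiltonian structure that prevents this and keeps $\|a_\eps\|_{H^{s+1}}$ bounded over the long $\sim \eps^{-1}$ slow-time scale. Once this modified-energy bound is secured, the remaining steps, and in particular the error estimate, are comparatively soft, resting only on the unitarity of the wave group and on product estimates in a norm two derivatives below the control norm.
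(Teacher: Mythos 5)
First, a point of context: the present paper does not prove Theorem \ref{thm:un} at all --- it is quoted from \cite{BetDaSm1}, whose proof is a quasilinear energy method for the Euler--Korteweg structure of \eqref{eq:dynaslow}, valid in every dimension. Your Steps 1 and 3 are fine in themselves: the expansion of the quantum pressure is exactly right, and, \emph{granted} a uniform bound $M$ on $\|(a_\eps,v_\eps)\|_{H^{s+1}\times H^s}$, the Duhamel argument with the unitary wave group and product estimates in $H^{s-2}$ delivers the stated error. Note already that your appeal to integrability in Step 2 is off-target, since \cite{BetDaSm1} covers dimensions where \eqref{GP} is not integrable; the relevant structure is quasilinear symmetrization, not inverse scattering.

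The genuine gap is Step 2, and it is not merely an omitted construction: the object you postulate cannot exist. You ask for a functional $\boE_s$, equivalent \emph{uniformly in $\eps$} to $\|a_\eps\|_{H^{s+1}}^2+\|v_\eps\|_{H^s}^2$, with $\frac{d}{dt}\boE_s\lesssim\eps\,\boE_s^{3/2}$. For data of size $\delta$ this would propagate the unweighted norm, up to a fixed constant, uniformly as $\delta\to0$; since for fixed $\eps$ the nonlinear flow agrees with the linearized flow up to $O_\eps(\delta^2)$ on bounded slow-time intervals, the same uniform propagation would then hold for the linearized system
\begin{equation*}
\partial_t a+\sqrt{2}\,\partial_\x v=0,\qquad \partial_t v+\sqrt{2}\,\partial_\x a=\tfrac{\eps^2}{\sqrt{2}}\,\partial_\x^3 a .
\end{equation*}
But in Fourier variables the quadratic form conserved by this system has symbol $\big(1+\tfrac{\eps^2\xi^2}{2}\big)|\hat a|^2+|\hat v|^2$: what the linear flow propagates uniformly is $\|a\|_{H^s}+\eps\|\partial_\x^{s+1}a\|_{L^2}+\|v\|_{H^s}$, whereas for $a^0=0$ and $\hat v^0$ concentrated at $|\xi|\sim\eps^{-1}$ one gets $\|a(t)\|_{H^{s+1}}\sim\eps^{-1}\|v^0\|_{H^s}$ within a slow time $t\sim\eps$. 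This is precisely the $\eps^{-1}$ amplification you yourself point out in your last paragraph --- but your inference is backwards: no nonlinear or Hamiltonian mechanism can cancel an amplification already present at the linearized level for arbitrarily small data. The repair, which is what \cite{BetDaSm1} actually implements, is to take the modified energy equivalent to the $\eps$-weighted quantity $\|a_\eps\|_{H^s}^2+\eps^2\|\partial_\x^{s+1}a_\eps\|_{L^2}^2+\|v_\eps\|_{H^s}^2$ (with $\rho$-dependent coefficients to absorb $g^{-1}$); the top-order bound of the theorem must then be read in this weighted sense, which is exactly the form of every top-order estimate in the present paper --- compare \eqref{grinzing1}, \eqref{grinzing1bis} and Lemma \ref{Controlhk}, where $N_\eps$ is controlled in $H^3(\R)$ but its fourth derivative only through $\eps\|\partial_x^4 N_\eps\|_{L^2(\R)}$. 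Finally, be aware of the knock-on effect on your Step 3: with only the weighted bound one gets $\|\eps^2\boR^{(d)}\|_{H^{s-2}}\lesssim\eps^2\|a_\eps\|_{H^{s+1}}\lesssim\eps M$ rather than $\eps^2 M$, so the $\eps^3 t$ term of the error estimate does not follow as you wrote it; it is recovered, for instance, by measuring the error one derivative lower, where $\|\eps^2\boR^{(d)}\|_{H^{s-3}}\lesssim\eps^2\|a_\eps\|_{H^{s}}\lesssim\eps^2 M$.
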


\begin{remark}
\label{rbs}
Notice that the bounds on $K(s)$ provided by the proof of Theorem \ref{thm:un} in \cite{BetDaSm1} blow up as $s$ tends to $+ \infty$. An interesting question is therefore to determine whether the constant $K(s)$ may be bounded independently of $s$. In particular, it would be of interest to extend the result to the limiting case $s = + \infty$.
\end{remark}

The purpose of the present paper is to consider even smaller perturbations of the constant one, and to characterize the deviation from the wave equation on larger time scales. Our initial data has the form
$$\left\{ \begin{array}{ll}
\rho(\x, 0) = 1 - \frac{\eps^2}{6} N_\eps^0(\eps \x),\\
v(\x, 0) = \frac{\eps^2}{6 \sqrt{2}} W_\eps^0(\eps \x), \end{array} \right.$$
where $N_\eps^0$ and $W_\eps^0$ are uniformly bounded in some Sobolev space $H^s(\R)$ for sufficiently large $s$. Applying Theorem \ref{thm:un} to such data, that is for $a_\eps^0 = - \frac{\eps \sqrt{2} }{6} N_\eps^0$ and $v_\eps^0 = \frac{\eps}{6 \sqrt{2}} W_\eps^0$, yields uniform bounds on a time scale $T_\eps = \boO(\eps^{- 3})$. More precisely, setting
$$n_\eps(\eps \x, \eps t) = - \frac{6}{\eps \sqrt{2}} a_\eps(\eps \x, \eps t), \ {\rm and} \ w_\eps(\eps \x, \eps t) = \frac{6 \sqrt{2}}{\eps} v_\eps(\eps \x, \eps t),$$
it follows for such initial data from Theorem \ref{thm:un} that we have

\begin{prop}
\label{prop:propre}
Assume $s \geq 2$ and $K \eps^2 \| (N_\eps^0, W_\eps^0) \|_{H^{s + 1(\R)} \times H^s(\R)} \leq 1$. Let $(\n, \w)$ denote the solution of the free wave equation
\begin{equation}
\label{eq:wamu}
\left\{ \begin{array}{ll} \partial_t \big( \sqrt{2} \n \big) - \partial_\x \w = 0,\\
\partial_t \w - 2 \partial_\x \big( \sqrt{2} \n \big) = 0, \end{array} \right.
\end{equation}
with initial datum $(N_\eps^0, W_\eps^0)$. Then, for any $0 \leq t \leq T_\eps$, we have
\begin{equation}
\begin{split}
\label{eq:wavegood}
& \|(n_\eps, w_\eps)(\cdot, \eps t) - (\n, \w)(\cdot, \eps t)\|_{H^{s-2}(\R) \times H^{s-2}(\R)}\\
\leq K \eps^3 t \Big( \|(N^0_\eps, & W^0_\eps) \|_{H^{s + 1}(\R) \times H^s(\R)} + \| (N^0_\eps, W^0_\eps) \|^2_{H^{s + 1}(\R) \times H^s(\R)} \Big),
\end{split}
\end{equation}
where
$$T_\eps = \frac{1}{K \eps^3 \| (N_\eps^0, W_\eps^0) \|_{H^{s + 1}(\R) \times H^s(\R)}}.$$
\end{prop}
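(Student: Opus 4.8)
The plan is to deduce Proposition \ref{prop:propre} directly from Theorem \ref{thm:un} by inverting the change of unknowns that defines $(n_\eps, w_\eps)$ and $(\n, \w)$; no new analysis is required, the whole content being a careful tracking of the two $\eps$-scalings involved.

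First I would apply Theorem \ref{thm:un} to the rescaled data $a_\eps^0 = -\frac{\eps\sqrt{2}}{6}N_\eps^0$ and $v_\eps^0 = \frac{\eps}{6\sqrt{2}}W_\eps^0$, which lie in $H^{s+1}(\R)\times H^s(\R)$. Because
$$\|(a_\eps^0, v_\eps^0)\|_{H^{s+1}(\R)\times H^s(\R)} \leq \frac{\eps\sqrt{2}}{6}\|(N_\eps^0, W_\eps^0)\|_{H^{s+1}(\R)\times H^s(\R)},$$
the hypothesis $K\eps^2\|(N_\eps^0,W_\eps^0)\|_{H^{s+1}(\R)\times H^s(\R)}\leq 1$ forces $K(s)\eps\|(a_\eps^0,v_\eps^0)\|_{H^{s+1}(\R)\times H^s(\R)}\leq 1$, once $K$ is taken at least $\frac{\sqrt{2}}{6}K(s)$; and the same inequality turns the lower bound $T_\eps\geq (K(s)\eps^2\|(a_\eps^0,v_\eps^0)\|)^{-1}$ of Theorem \ref{thm:un} into the announced $T_\eps=(K\eps^3\|(N_\eps^0,W_\eps^0)\|)^{-1}$.

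Next I would set $\n=-\frac{6}{\eps\sqrt{2}}\a$ and $\w=\frac{6\sqrt{2}}{\eps}\v$, where $(\a,\v)$ is the solution of \eqref{eq:wamu0} with datum $(a_\eps^0,v_\eps^0)$. Substituting these identities into \eqref{eq:wamu0} and clearing the common powers of $\eps$, a one-line computation turns the two equations into $\partial_t(\sqrt{2}\n)-\partial_\x\w=0$ and $\partial_t\w-2\partial_\x(\sqrt{2}\n)=0$, that is exactly \eqref{eq:wamu}; since the scaling inverts the definitions of $a_\eps^0,v_\eps^0$, the initial datum of $(\n,\w)$ is $(N_\eps^0,W_\eps^0)$, so by uniqueness for the linear system this is the wave appearing in the statement. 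The error estimate then follows by writing $n_\eps-\n=-\frac{6}{\eps\sqrt{2}}(a_\eps-\a)$ and $w_\eps-\w=\frac{6\sqrt{2}}{\eps}(v_\eps-\v)$, whence
$$\|(n_\eps-\n,w_\eps-\w)(\cdot,\eps t)\|_{H^{s-2}(\R)\times H^{s-2}(\R)}\leq\frac{6\sqrt{2}}{\eps}\|(a_\eps-\a,v_\eps-\v)(\cdot,\eps t)\|_{H^{s-2}(\R)\times H^{s-2}(\R)};$$
inserting the bound of Theorem \ref{thm:un} and replacing $\|(a_\eps^0,v_\eps^0)\|$ by its majorant $\frac{\eps\sqrt{2}}{6}\|(N_\eps^0,W_\eps^0)\|$, the factor $\eps^{-1}$ from the unknowns combines with the factors $\eps^2$ and $\eps$ carried by the quadratic and linear norm terms on the right, so that the $\eps^2 t$ and $\eps^3 t$ prefactors both become $\eps^3 t$, which is precisely \eqref{eq:wavegood}.

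I do not expect any genuine obstacle: the argument is a change of variables, and the only point needing attention is that $N_\eps^0$ and $W_\eps^0$ enter with different numerical prefactors ($\frac{\sqrt{2}}{6}$ against $\frac{1}{6\sqrt{2}}$), so the relation between $\|(a_\eps^0,v_\eps^0)\|$ and $\eps\|(N_\eps^0,W_\eps^0)\|$ is a two-sided bound rather than an equality. One simply absorbs this discrepancy into $K$, taking care to use each inequality in the direction compatible with the smallness hypothesis, the time-scale lower bound, and the error estimate simultaneously.
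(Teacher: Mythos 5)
Your proof is correct and takes essentially the same route as the paper: the paper presents Proposition \ref{prop:propre} as an immediate consequence of Theorem \ref{thm:un} applied to the data $a_\eps^0 = -\frac{\eps\sqrt{2}}{6}N_\eps^0$, $v_\eps^0 = \frac{\eps}{6\sqrt{2}}W_\eps^0$, together with exactly the rescaling $n_\eps = -\frac{6}{\eps\sqrt{2}}a_\eps$, $w_\eps = \frac{6\sqrt{2}}{\eps}v_\eps$ that you invert. Your bookkeeping of the $\eps$-powers (both error terms collapsing to the $\eps^3 t$ prefactor), of the sign flip turning \eqref{eq:wamu0} into \eqref{eq:wamu}, and of the single constant $K$ serving the smallness hypothesis, the time scale and the error bound simultaneously, simply supplies the details the paper leaves implicit.
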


In particular, if $N_\eps^0$ and $W_\eps^0$ are required to be uniformly bounded in $H^{s + 1}(\R) \times H^s(\R)$, then in view of \eqref{eq:wavegood}, the wave equation provides a good approximation on time scales of order $o(\eps^{- 3})$. This approximation ceases to be valid for times of order $\boO(\eps^{- 3})$ as the subsequent analysis will show.

The general solution to \eqref{eq:wamu} may be written as
$$(\n, \w) = (\n^+, \w^+) + (\n^-, \w^-),$$
where the functions $(\n^\pm, \w^\pm)$ are solutions to \eqref{eq:wamu} given by the d'Alembert formulae,
\begin{align*}
\big( \n^+(\x, t), \w^+(\x, t) \big) = \big( N^+(\x - \sqrt{2} t), W^+( \x - \sqrt{2} t) \big),\\
\big( \n^-(\x, t), \w^-(\x, t) \big) = \big( N^-(\x + \sqrt{2} t), W^-( \x + \sqrt{2} t) \big),
\end{align*}
where the profiles $N^\pm$ and $W^\pm$ are real-valued functions on $\R$.
Solutions may therefore be split into right and left going waves of speed
$\sqrt{2}$. Since the functions $(\n^\pm, \w^\pm)$ are solutions to \eqref{eq:wamu}, it follows that
\begin{equation}
\big( 2 N^+ + W^+ \big)_\x = 0, \ {\rm and} \ \big( 2 N^- - W^- \big)_\x = 0,
\end{equation}
so that, if the functions decay to zero at infinity, then
\begin{equation}
\label{heuriger}
2 N^\pm = \mp W^\pm = \frac{2 N_\eps^0 \mp W_\eps^0}{2}.
\end{equation}
At this stage, it is worthwhile to notice that the Gross-Pitaevskii equation, as well as the wave equation, is invariant under the symmetry $x \to - x$.

It remains to derive the appropriate approximation for time scales of order $\boO(\eps^{- 3})$. On a formal level, this was performed in \cite{KuznZak1}. We wish to give here a rigorous proof of that approximation. In view of the previous discussion, and following the approach of \cite{KuznZak1}, we introduce the slow variables
\begin{equation}
\label{martinsepp}
x = \varepsilon(\x + \sqrt{2} t), \ {\rm and} \ \tau = \frac{\varepsilon^3}{2 \sqrt{2}} t.
\end{equation}
The definition of the new variable $x$ corresponds to a reference frame travelling to the left with speed $\sqrt{2}$ in the original variables $(\x, t)$. In this frame, the wave $(\n^-, \w^-)$, originally travelling to the left, is now stationary, whereas the wave $(\n^+,\w^+)$ travelling to the right now has a speed equal to $8 \eps^{- 2}$. This change of variable is therefore particularly appropriate for the study of waves travelling to the left. This will lead us to impose some additional assumptions which will imply the smallness of $N^+$ and $W^+$. Notice that the change of frame breaks the symmetry of the original equations.

In view of \eqref{martinsepp}, we then define the rescaled functions $N_\varepsilon$ and $\Theta_\varepsilon$ as follows
\begin{equation}
\label{slow-var}
\begin{split}
N_\varepsilon(x, \tau) & = \frac{6}{\varepsilon^2} \eta(\x, t) = \frac{6}{\varepsilon^2} \eta \Big( \frac{x}{\varepsilon} - \frac{4 \tau}{\varepsilon^3}, \frac{2 \sqrt{2} \tau}{\varepsilon^3} \Big),\\
\Theta_\varepsilon(x, \tau) & = \frac{6 \sqrt{2}}{\varepsilon} \varphi(\x, t) =
\frac{6 \sqrt{2}}{\varepsilon} \varphi \Big( \frac{x}{\varepsilon} - \frac{4
\tau}{\varepsilon^3}, \frac{2 \sqrt{2} \tau}{\varepsilon^3} \Big),
\end{split}
\end{equation}
where $\Psi = \varrho \exp i \varphi$ and $\eta = 1 - \varrho^2= 1 - |\Psi|^2$.

Our main theorem is

\begin{theorem}
\label{cochon}
Let $\varepsilon > 0$ be given and assume that the initial data $\Psi_0(\cdot) = \Psi(\cdot, 0)$ belongs to $X^4(\R)$ and satisfies the assumption
\begin{equation}
\label{grinzing1}
\| N_\varepsilon^0 \|_{H^3(\R)} + \eps \| \partial^4_x N_\varepsilon^0 \|_{L^2(\R)}+ \|\partial_x \Theta_\eps^0\|_{H^3(\R)} \leq K_0.
\end{equation}
Let $\boN_\varepsilon$ and $\boM_\eps$ denote the solutions to the Korteweg-de Vries equation
\renewcommand{\theequation}{KdV}
\begin{equation}
\label{KdV}
\partial_\tau N + \partial_x^3 N + N \partial_x N = 0
\end{equation}
with initial data $N_\varepsilon^0$, respectively $\partial_x \Theta_\eps^0$. There exists positive constants $\varepsilon_0$ and $K_1$, depending possibly only on $K_0$ such that, if $\varepsilon \leq \varepsilon_0$, we have for any $\tau \in \R$,
\renewcommand{\theequation}{\arabic{equation}}
\setcounter{equation}{18}
\begin{equation}
\begin{split}
\label{eq:fortis}
\| \boN_\varepsilon & (\cdot, \tau) - N_\varepsilon(\cdot, \tau ) \|_{L^2(\R)} + \| \boM_\eps(\cdot, \tau) - \partial_x \Theta_\varepsilon(\cdot, \tau ) \|_{L^2(\R)}\\
& \leq K_1 \big( \eps + \| N_\eps^0 - \partial_x \Theta_\eps^0 \|_{H^3(\R)} \big) \exp (K_1 |\tau|).
\end{split}
\end{equation}
\end{theorem}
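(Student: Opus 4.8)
The plan is to recast the statement as a consistency-plus-stability comparison between the \emph{exact} dynamics of the rescaled hydrodynamic quantities $(N_\eps,\partial_x\Theta_\eps)$ and the \eqref{KdV} flow, and then to close the gap by an energy estimate fed with global-in-time, $\eps$-uniform Sobolev bounds extracted from the integrable structure. First I would derive the exact equations satisfied by $N_\eps$ and $\partial_x\Theta_\eps$ in the slow variables \eqref{martinsepp}--\eqref{slow-var}. Substituting $\Psi=\varrho\exp i\varphi$, $\eta=1-\varrho^2$ into the hydrodynamic system \eqref{HDGP} and inserting the scalings, the leading order reproduces two decoupled copies of \eqref{KdV}, while the remainder is a coupled system of the schematic form $\partial_\tau N_\eps+\partial_x^3 N_\eps+N_\eps\partial_x N_\eps = R_\eps$, together with an analogous equation for $\partial_x\Theta_\eps$, where the source $R_\eps$ collects the corrections produced by the quantum pressure and by the nonlinear coupling. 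Two features are essential and I would isolate them carefully: that $R_\eps$ is of size $\boO(\eps)$ in the relevant norms (this is where the weighted term $\eps\|\partial_x^4 N_\eps^0\|_{L^2}$ in \eqref{grinzing1} enters), and that the coupling between the two unknowns degenerates on the diagonal, so that $R_\eps$ is controlled by $\eps$ plus a multiple of the mismatch $N_\eps-\partial_x\Theta_\eps$. This accounts for the two terms on the right-hand side of \eqref{eq:fortis}.

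Second---and this is the heart of the matter---I would establish bounds on $\|N_\eps(\cdot,\tau)\|_{H^3}$ and $\|\partial_x\Theta_\eps(\cdot,\tau)\|_{H^3}$ that are uniform in both $\eps\le\eps_0$ and $\tau\in\R$. A naive energy estimate on the perturbed KdV system only survives on a bounded $\tau$-interval, whereas \eqref{eq:fortis} is global in $\tau$; the exponential factor there should arise from Gronwall applied with \emph{time-independent} Sobolev norms. To obtain such norms I would exploit the integrability of \eqref{GP}: the inverse-scattering formalism furnishes an infinite hierarchy of conserved functionals, and I would show that, when expressed in the variables \eqref{slow-var} and expanded in $\eps$, suitable combinations of the lowest Gross-Pitaevskii invariants reproduce, to leading order, the conserved quantities of \eqref{KdV} (the $L^2$ norm, the KdV Hamiltonian, and the next invariant, which controls $H^2$). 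Propagating this correspondence up the hierarchy would yield functionals that stay coercive for the $H^3$ norm of $(N_\eps,\partial_x\Theta_\eps)$ with constants independent of $\tau$ and $\eps$, hence the desired uniform bounds. The companion bounds for the genuine KdV solutions $\boN_\eps,\boM_\eps$ follow from the classical KdV conservation laws together with the $H^3$ control of the data furnished by \eqref{grinzing1}.

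With these a priori estimates in hand, the conclusion is a Gronwall argument. Setting $u=\boN_\eps-N_\eps$ and $z=\boM_\eps-\partial_x\Theta_\eps$, and subtracting \eqref{KdV} from the equations derived in the first step, one obtains a linear system for $(u,z)$ driven by $R_\eps=\boO(\eps+\|N_\eps-\partial_x\Theta_\eps\|)$ and by quadratic terms of the form $N_\eps\partial_x u+u\,\partial_x\boN_\eps$. An $L^2$ energy estimate eliminates the dispersive term $\partial_x^3$ by skew-adjointness, bounds the transport-type contributions through the uniform $H^3$ control of the previous two steps (via $\|\partial_x N_\eps\|_{L^\infty}$ and Sobolev embedding), and treats the mismatch $N_\eps-\partial_x\Theta_\eps$ by a preliminary energy estimate on that difference alone, showing it remains of order $\eps+\|N_\eps^0-\partial_x\Theta_\eps^0\|_{H^3}$ for all $\tau$. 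Integrating the resulting differential inequality produces the factor $\exp(K_1|\tau|)$ and the prefactor $\eps+\|N_\eps^0-\partial_x\Theta_\eps^0\|_{H^3}$ of \eqref{eq:fortis}.

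The step I expect to be the main obstacle is the second one: making precise and rigorous the correspondence between the Gross-Pitaevskii and Korteweg-de Vries invariants. The difficulty is both algebraic---identifying which combinations of GP invariants carry the KdV conserved densities at leading order, and tracking the $\eps$-expansion of the remainders so that they do not destroy coercivity---and analytic, since the GP invariants are defined under the nonvanishing boundary condition \eqref{bdinfini}, and one must ensure that the hydrodynamic substitution $\Psi=\varrho\exp i\varphi$ is legitimate (that is, $\varrho$ bounded away from $0$) on the whole time axis, which itself relies on the very bounds one is trying to establish. Decoupling this apparent circularity, presumably by a continuity or bootstrap argument in $\tau$, is the delicate point on which the global-in-$\tau$ nature of the theorem ultimately rests.
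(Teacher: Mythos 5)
Your overall architecture --- consistency plus stability, with time-uniform Sobolev bounds extracted by matching Gross-Pitaevskii invariants against Korteweg-de Vries invariants --- is indeed the paper's, and your second step corresponds to Sections \ref{Invariants}--\ref{Rescaledinv} and Proposition \ref{H3-control}; you also correctly identify it as the bulk of the work, including the circularity in justifying the lifting $\Psi = \varrho \exp i \varphi$. The gap lies in your first and third steps. It is not true that $N_\eps$ and $\partial_x\Theta_\eps$ each satisfy a \eqref{KdV} equation up to a source of size $\boO(\eps)$ plus an $\boO(1)$ multiple of the mismatch. The exact equation for $N_\eps$, obtained from \eqref{slow1-0}, reads
\begin{equation*}
\partial_\tau N_\eps + \frac{4}{\eps^2}\,\partial_x\big(N_\eps - \partial_x\Theta_\eps\big) + \frac{2}{3}\,\partial_x\big(N_\eps\,\partial_x\Theta_\eps\big) = 0 ;
\end{equation*}
it contains no dispersive term at all (the third derivative enters only through the phase equation \eqref{slow2-0}), and the coupling to the mismatch carries the singular factor $\eps^{-2}$. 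Since the mismatch is, at best, of order $\eps$ in $H^3$ (and of order $\eps^2|\tau|$ in $L^2$, by Theorem \ref{H3-controlbis}), the source you would feed into Gronwall is of size $\boO(\eps^{-1})$ or worse, even when $N_\eps^0 = \partial_x\Theta_\eps^0$, and the argument collapses. This is precisely why the paper introduces $U_\eps = \frac{N_\eps + \partial_x\Theta_\eps}{2}$ and $V_\eps = \frac{N_\eps - \partial_x\Theta_\eps}{2}$: the singular terms cancel in the equation for the half-sum, which is the genuinely perturbed \eqref{KdV} equation \eqref{slow1}, while $V_\eps$ obeys the fast transport equation \eqref{slow2} with speed $8/\eps^2$. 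The Gronwall argument is run on $Z_\eps = U_\eps - \boN_\eps$ only, and the theorem follows from the triangle inequality applied to $N_\eps - \boN_\eps = Z_\eps + V_\eps$ (and $\partial_x \Theta_\eps - \boM_\eps = Y_\eps - V_\eps$ with $Y_\eps = U_\eps - \boM_\eps$).

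A related mis-attribution: you propose to control the mismatch by ``a preliminary energy estimate on that difference alone''. A direct energy estimate on the equation for $V_\eps$ cannot yield a time-uniform bound of order $\eps + \|V_\eps(\cdot,0)\|_{H^3}$, because its right-hand side contains the $\boO(1)$ term $\partial_x^3 U_\eps$; one would only get linear growth at an $\boO(1)$ rate. In the paper, the time-uniform $H^3$ bound \eqref{dobling1bis} on $V_\eps$ comes from the invariants: by Proposition \ref{Controluv} the combinations $\boE_k - \sqrt{2}\,\boP_k$ are $\eps^2$-close to $E_{k-1}^{KdV}(V_\eps)$, they are conserved, and the coercivity of the \eqref{KdV} invariants (Lemma \ref{Kdvcontrol}) converts this into the uniform bound. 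The sharper $L^2$ estimate of Theorem \ref{H3-controlbis} additionally requires exploiting the fast transport: one substitutes $\partial_x V_\eps = \frac{\eps^2}{8}\big(-\partial_\tau V_\eps + \cdots\big)$ back into the energy identity and integrates by parts in time to gain the factor $\eps^2$. Nothing in your sketch produces either mechanism, and without one of them the prefactor $\eps + \|N_\eps^0 - \partial_x\Theta_\eps^0\|_{H^3}$ in \eqref{eq:fortis} is out of reach.
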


Theorem \ref{cochon} yields a convergence result to the \eqref{KdV} equation for appropriate initial data. Since the norms involved in \eqref{eq:fortis} are translation invariant, the \eqref{KdV} approximation can only be relevant if the waves travelling to the right are negligible. In view of our previous discussion, this is precisely the role of the term $\| N_\eps^0 -\partial_x \Theta_\eps^0 \|_{H^3(\R)}$ in the right-hand side of \eqref{eq:fortis}. Indeed, in the setting of Theorem \ref{cochon}, the right going waves $N^+$ and $W^+$ are given by
$$2 N^+ = - W^+ = N_\eps^0 - \partial_x \Theta_\eps^0.$$
If the term $\| N_\eps^0 -\partial_x \Theta_\eps^0 \|_{H^3(\R)}$ is small, then the \eqref{KdV} approximation is valid on a time interval (in the original time variable) $t \in [0, S_\eps]$ with
$$S_\eps = o \bigg( \min \bigg\{ \frac{|\log(\eps)|}{\eps^3}, \frac{|\log(\|N_\eps^0 - \partial_x \Theta_\eps^0 \|_{H^3(\R)})|}{\eps^3} \bigg\} \bigg).$$
In particular, if $\| N_\eps^0 - \partial_x \Theta_\eps^0 \|_{H^3(\R)} \leq C \eps^\alpha$, with $\alpha > 0$, then the approximation is valid on a time interval $t \in [0, S_\eps]$ with $S_\eps = o(\eps^{- 3} |\log(\eps)|)$. Moreover, if $\| N_\eps^0 - \partial_x \Theta_\eps^0 \|_{H^3(\R)}$ is of order $\boO(\eps)$, then the approximation error remains of order $\boO(\eps)$ on a time interval $t \in [0, S_\eps]$ with $S_\eps = \boO(\eps^{- 3})$.

\begin{remark}
We also show in the course of our proofs (see Proposition \ref{H3-control} below) that, under the assumptions of Theorem \ref{cochon}, the $H^3$-norms of $N_\eps$ and $\partial_x \Theta_\eps$ remain uniformly bounded in time. Since the same property holds for the solutions $\boN_\eps$ and $\boM_\eps$, it follows by interpolation that the difference of the two solutions may also be computed in terms of $H^s$-norm as
\begin{align*}
\| \boN_\varepsilon & (\cdot, \tau) - N_\varepsilon(\cdot, \tau ) \|_{H^s(\R)} + \| \boM_\eps(\cdot, \tau) - \partial_x \Theta_\varepsilon(\cdot, \tau ) \|_{H^s(\R)}\\
& \leq K \big( \eps + \| N_\eps^0 - \partial_x \Theta_\eps^0 \|_{H^3(\R)} \big)^{\alpha(s)} \exp (\alpha(s) K_1 |\tau|).
\end{align*}
for any $0 \leq s < 3$ and any $\tau \in \R$, where $\alpha(s) = 1 - \frac{s}{3}$, and where the constant $K$ depends possibly on $K_0$ and $s$.
\end{remark}

\begin{remark}
As a matter of fact, we believe that for any $s \geq 0$, the following inequality holds
 \begin{equation}
\label{fortisse3}
\| \boN_\varepsilon(\cdot, \tau) - N_\varepsilon(\cdot, \tau ) \|_{H^s(\R)} \leq K(s) \big( \eps^2 + \| N_\eps^0 - \partial_x \Theta_\eps^0 \|_{H^{s+3}(\R)} \big) \exp ( K_1 |\tau|),
\end{equation}
for any $\tau \in \R$. To prove inequality \eqref{fortisse3} along the lines of the proof of Theorem \ref{cochon} would require a more general treatment of the invariants of the Gross-Pitaevskii equation, whereas in this paper, we have only handled the lower order ones (at the cost of sometimes tedious computations). In a forthcoming paper \cite{BeGrSaS3}, we make use of a different strategy avoiding invariants but at the cost of a higher loss of derivatives. Here also, as in Remark \ref{rbs}, it would be of interest to prove a result in $H^\infty(\R)$.
\end{remark}

The functions $N_\eps$ and $\partial_x \Theta_\eps$ are rigidly constrained one to the other as shown by the following

\begin{theorem}
\label{H3-controlbis}
Let $\Psi$ be a solution to \eqref{GP} in $\boC^0(\R, H^4(\R))$ with initial data $\Psi^0$. Assume that \eqref{grinzing1} holds. Then, there exists some positive constant $K$, which does not depend on $\varepsilon$ nor $\tau$, such that
\begin{equation}
\label{dobling1ter}
\| N_\varepsilon(\cdot, \tau) \pm \partial_x \Theta_\varepsilon(\cdot, \tau) \|_{L^2(\R)} \leq \| N_\varepsilon^0 \pm \partial_x \Theta_\varepsilon^0 \|_{L^2(\R)} + K \eps ^2 \big( 1 + |\tau| \big),
\end{equation}
for any $\tau \in \R$.
\end{theorem}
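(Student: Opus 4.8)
The plan is to work with the hydrodynamic form \eqref{HDGP} of \eqref{GP} rewritten in the slow variables \eqref{martinsepp}, to diagonalize the singular transport operator into Riemann-type invariants, and then to exploit one structural trick that trades a spatial derivative of the fast mode for a time derivative, thereby gaining the two powers of $\eps$.

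First I would derive the equations satisfied by $N_\eps$ and $\Phi := \partial_x\Theta_\eps$ in the variables $(x,\tau)$. Substituting $\eta = \tfrac{\eps^2}{6}N_\eps$ and $v = 2\partial_\x\varphi = \tfrac{\eps^2}{3\sqrt2}\Phi$ into \eqref{HDGP}, and using $\partial_\x = \eps\partial_x$ together with $\partial_t = \eps\sqrt2\,\partial_x + \tfrac{\eps^3}{2\sqrt2}\partial_\tau$, a computation gives, schematically,
\begin{align*}
\partial_\tau N_\eps & = \tfrac{4}{\eps^2}\partial_x(\Phi - N_\eps) - \tfrac23\partial_x(N_\eps\Phi),\\
\partial_\tau \Phi & = \tfrac{4}{\eps^2}\partial_x(N_\eps - \Phi) - \tfrac13\partial_x(\Phi^2) - 2\partial_x^3 N_\eps + \eps^2 \mathcal R,
\end{align*}
where $\mathcal R$ collects the subleading contributions of the quantum pressure and of the factors $\rho\neq1$. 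A careful expansion of $\tfrac{\partial_\x^2\rho}{\rho}-\tfrac{(\partial_\x\rho)^2}{2\rho^2}$ (using $\tfrac1\rho = 1+\eta+\cdots$) shows that each such contribution is genuinely of order $\eps^2$ in the $H^3$-controlled norms; certifying $\mathcal R = O(\eps^2)$ rather than $O(1)$ is one delicate bookkeeping point. As a consistency check, imposing $\Phi = N_\eps$ in the sum of the two equations and using $S^+ = 2N_\eps$ recovers exactly \eqref{KdV} for $N_\eps$.

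Second, I would introduce the combinations $S^\pm := N_\eps \pm \partial_x\Theta_\eps$, which are precisely the quantities to be estimated. Adding and subtracting the two equations, the singular $\tfrac{4}{\eps^2}$ transport cancels in $S^+$ and reproduces a pure transport of speed $8\eps^{-2}$ in $S^-$,
\begin{align*}
\partial_\tau S^+ & = R^+,\\
\partial_\tau S^- & = - \tfrac{8}{\eps^2}\partial_x S^- + R^-,
\end{align*}
where $R^\pm$ are $O(1)$ combinations of the quadratic nonlinearities and of $\partial_x^3 N_\eps$, plus an $O(\eps^2)$ remainder. In the energy identity for $\tfrac12\|S^\pm\|_{L^2}^2$, the singular term drops since $\partial_x$ is skew-adjoint, and the Burgers self-interaction $\langle S^\pm,\partial_x(S^\pm)^2\rangle$ as well as the self-dispersion $\langle S^\pm,\partial_x^3 S^\pm\rangle$ vanish after integration by parts. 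The key observation is that every surviving contribution is a genuine cross term carrying a factor $S^-$, which by integration by parts in $x$ may always be arranged to be the differentiated one, $\partial_x S^-$.

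Third, the heart of the matter: in each cross term, after exposing $\partial_x S^-$, I substitute from the $S^-$ equation
\begin{equation*}
\partial_x S^- = -\tfrac{\eps^2}{8}\,\partial_\tau S^- + \tfrac{\eps^2}{8} R^-,
\end{equation*}
which produces the explicit $\eps^2$. Integrating the energy identity over $[0,\tau]$ and integrating by parts in time in the $\partial_\tau S^-$ contributions yields $O(\eps^2)$ boundary terms and a bulk term of size $O(\eps^2)$ per unit time, all estimated through the uniform-in-$\tau$ $H^3$ bounds on $N_\eps$ and $\partial_x\Theta_\eps$ furnished by Proposition \ref{H3-control} (which, via the equations, also control $\|\partial_\tau S^\pm\|_{L^2}$). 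One is thus led to
\begin{equation*}
\tfrac12\|S^\pm(\cdot,\tau)\|_{L^2}^2 \le \tfrac12\|S^\pm(\cdot,0)\|_{L^2}^2 + K\eps^2(1+|\tau|)\big(1 + \|S^\pm(\cdot,\tau)\|_{L^2}\big),
\end{equation*}
and solving this quadratic inequality in $\|S^\pm(\cdot,\tau)\|_{L^2}$ gives the announced estimate \eqref{dobling1ter}. The hard part will be Step three: verifying that after the substitution and the time integration by parts, every boundary and bulk term is controlled at order $\eps^2$ with at most one power of $\|S^\pm\|_{L^2}$, which is exactly what forces the use of the uniform $H^3$ bounds and a clean split of $R^\pm$ into its $O(1)$ and $O(\eps^2)$ parts.
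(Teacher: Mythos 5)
Your proposal is, in outline, the paper's own proof: the same rescaled system (your two equations are \eqref{slow1-0} and the $x$-derivative of \eqref{slow2-0}), the same diagonalization into $U_\eps=S^+/2$, $V_\eps=S^-/2$ satisfying \eqref{slow1}--\eqref{slow2}, the same $L^2$ energy identity in which the singular transport and the self-interactions drop out, and above all the same key trick of substituting $\partial_x S^-=-\tfrac{\eps^2}{8}\partial_\tau S^-+\tfrac{\eps^2}{8}R^-$ into the cross terms and then integrating by parts in time, with all terms controlled by the uniform $H^3$ bounds of Proposition \ref{H3-control}; this is exactly the paper's passage \eqref{soleillevant}--\eqref{marseille} and its treatment of the terms $J_1,\dots,J_4$. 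Your remark that in the $S^+$ identity every cross term can be rearranged to carry $\partial_x S^-$ is also correct and is what is needed for the $+$ sign, which the paper does not write out (it only treats $V_\eps$).

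There is, however, one claim in your Step three that is false as stated, and it hides the only delicate point of the argument: the $H^3$ bounds do \emph{not} control $\|\partial_\tau S^-\|_{L^2}$ uniformly. By the $S^-$ equation, $\partial_\tau S^-=-\tfrac{8}{\eps^2}\partial_x S^-+R^-$ is of size $\boO(\eps^{-2})$ in $L^2$ (only $\eps^2\partial_\tau S^-$, and $\partial_\tau S^+$, are uniformly bounded). This bites precisely in your bulk term: after the time integration by parts, $\partial_\tau$ falls on the other factor of the cross term, and whenever that factor contains $N_\eps$ or $S^-$ (for instance the factor $\partial_x^2 N_\eps$ coming from the dispersive cross term), its time derivative again produces a singular $\boO(\eps^{-2})$ piece which, estimated crudely by an $L^2$ bound as you propose, destroys the $\eps^2$ just gained and leaves an error $\boO(|\tau|)$ instead of $\boO(\eps^2|\tau|)$. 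The repair is exactly what the paper does in \eqref{persil}--\eqref{omo}: (i) the singular piece of the time-differentiated factor pairs with $S^-$ into an exact spatial derivative, $\int_\R \partial_x^3 S^-\,S^-=0$, so it vanishes identically; and (ii) the remaining dangerous contribution, of the form $\int U_\eps\,\partial_\tau S^-\,S^-=\tfrac12\int U_\eps\,\partial_\tau\big((S^-)^2\big)$, is integrated by parts in time a \emph{second} time so that only $\partial_\tau U_\eps$ appears, and that quantity is uniformly bounded in $L^2$ by \eqref{slow1} and \eqref{grinzing1bis}. For the $+$ sign the analogous repair is to shuffle the spatial derivatives onto the factor that is not time-differentiated (e.g. $\int\partial_x^2\partial_\tau U_\eps\,S^-=\int\partial_\tau U_\eps\,\partial_x^2 S^-$), again using only the boundedness of $\partial_\tau U_\eps$. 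With these two corrections your argument closes and coincides with the paper's.
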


The approximation errors provided by Theorem \ref{cochon} and \ref{H3-controlbis} diverge as time increases. Concerning the weaker notion of consistency, we have the following result whose peculiarity is that the bounds are independent of time.

\begin{theorem}
\label{Bobby}
Let $\Psi$ be a solution to \eqref{GP} in $\boC^0(\R, H^4(\R))$ with initial data $\Psi^0$. Assume that \eqref{grinzing1} holds. Then, there exists some positive constant $K$, which does not depend on $\varepsilon$ nor $\tau$, such that
\begin{equation}
\label{jerrard}
\| \partial_\tau U_\eps + \partial^3_x U_\eps + U_\eps \partial_x U_\eps \|_{L^2(\R)} \leq K(\eps + \|N_\eps^0 - \partial_x \Theta_\eps^0 \|_{H^3(\R)}),
\end{equation}
for any $\tau \in \R$, where $U_\eps = \frac{N_\eps + \partial_x \Theta_\eps}{2}$.
\end{theorem}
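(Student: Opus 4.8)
The plan is to work in the hydrodynamic (Madelung) formulation and to track how the two real unknowns $N_\eps$ and $\partial_x\Theta_\eps$ evolve once the long-wave scaling \eqref{martinsepp}--\eqref{slow-var} has been inserted. Writing $\Psi = \varrho\exp i\varphi$ and $\eta = 1-\varrho^2$, the imaginary part of \eqref{GP} (the continuity equation) and the real part (the phase/Bernoulli equation, in which $\frac{\partial_\x^2\varrho}{\varrho}=-\frac{\partial_\x^2\eta}{2(1-\eta)}-\frac{(\partial_\x\eta)^2}{4(1-\eta)^2}$) become, after substituting $\eta=\frac{\eps^2}{6}N_\eps$, $\varphi=\frac{\eps}{6\sqrt2}\Theta_\eps$ and using $\partial_\x=\eps\partial_x$, $\partial_t=\sqrt2\,\eps\,\partial_x+\frac{\eps^3}{2\sqrt2}\partial_\tau$, two evolution equations. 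The first is exact,
\begin{equation*}
\partial_\tau N_\eps=\frac{4}{\eps^2}\partial_x\big(\partial_x\Theta_\eps-N_\eps\big)-\frac{2}{3}\partial_x\big(N_\eps\,\partial_x\Theta_\eps\big),
\end{equation*}
while differentiating the phase equation in $x$ and expanding $\frac1{1-\eta}$ yields
\begin{equation*}
\partial_\tau\,\partial_x\Theta_\eps=\frac{4}{\eps^2}\partial_x\big(N_\eps-\partial_x\Theta_\eps\big)-2\partial_x^3N_\eps-\frac{1}{3}\partial_x\big((\partial_x\Theta_\eps)^2\big)+\eps^2\boE_\eps,
\end{equation*}
where $\boE_\eps$ is an explicit remainder (schematically $\partial_x(N_\eps\partial_x^2N_\eps)+\dots$) bounded in $L^2$ by Sobolev norms of $N_\eps$ and $\partial_x\Theta_\eps$. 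The decisive structural feature is the singular term $\pm\frac{4}{\eps^2}\partial_x(N_\eps-\partial_x\Theta_\eps)$, which transports the counter-propagating (right-going) wave at speed $8\eps^{-2}$.

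Setting $R_\eps:=N_\eps-\partial_x\Theta_\eps$ (the wave $2N^+=-W^+=N_\eps^0-\partial_x\Theta_\eps^0$ carried by the fast frame), I would then average the two equations to form $\partial_\tau U_\eps=\frac12(\partial_\tau N_\eps+\partial_\tau\,\partial_x\Theta_\eps)$ and add $\partial_x^3U_\eps+U_\eps\partial_xU_\eps$. Two cancellations occur simultaneously: the singular $\frac1{\eps^2}$ transport terms are exactly opposite and disappear, and the KdV nonlinearity $U_\eps\partial_xU_\eps=\frac12\partial_xU_\eps^2$ cancels the $U_\eps^2$ part of the quadratic contributions coming from $N_\eps\partial_x\Theta_\eps$ and $(\partial_x\Theta_\eps)^2$. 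Substituting $N_\eps=U_\eps+\frac12R_\eps$, $\partial_x\Theta_\eps=U_\eps-\frac12R_\eps$ then leaves
\begin{equation*}
\partial_\tau U_\eps+\partial_x^3U_\eps+U_\eps\partial_xU_\eps=-\tfrac12\partial_x^3R_\eps+\tfrac16\partial_x(U_\eps R_\eps)+\tfrac1{24}\partial_x R_\eps^2+\tfrac{\eps^2}{2}\boE_\eps.
\end{equation*}
The entire consistency residual is thus expressed through the right-going wave $R_\eps$ and the genuine $O(\eps^2)$ remainder, with no secular prefactor; this is the reason one may hope for a time-independent right-hand side in \eqref{jerrard}.

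It then remains to estimate the right-hand side in $L^2$. The remainder $\frac{\eps^2}{2}\boE_\eps$ is bounded by $K\eps^2\le K\eps$ using the uniform-in-time $H^3$ bounds on $N_\eps$ and $\partial_x\Theta_\eps$ furnished by Proposition \ref{H3-control} (with the regularity guaranteed by \eqref{grinzing1}). For the three remaining terms one has, crudely, $\|\partial_x^3R_\eps\|_{L^2}\le\|R_\eps\|_{H^3}$ and $\|\partial_x(U_\eps R_\eps)\|_{L^2}+\|\partial_x R_\eps^2\|_{L^2}\le K\big(\|U_\eps\|_{H^3}+\|R_\eps\|_{H^3}\big)\|R_\eps\|_{H^3}$, so that, the norm $\|U_\eps\|_{H^3}$ being uniformly controlled, everything reduces to estimating $\|R_\eps(\cdot,\tau)\|_{H^3}$.

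The main obstacle is exactly this last point: one must establish that the right-going wave remains small \emph{uniformly in time},
\begin{equation*}
\|R_\eps(\cdot,\tau)\|_{H^3(\R)}\le K\big(\eps+\|N_\eps^0-\partial_x\Theta_\eps^0\|_{H^3(\R)}\big)\qquad\text{for every }\tau\in\R.
\end{equation*}
A direct energy estimate on the transport equation $\partial_\tau R_\eps+\frac8{\eps^2}\partial_x R_\eps=\partial_x G_\eps$, with $G_\eps=2\partial_x^2N_\eps-\frac23N_\eps\partial_x\Theta_\eps+\frac13(\partial_x\Theta_\eps)^2$, only produces a secularly growing control of the form $\|R_\eps^0\|_{H^3}+K\eps^2(1+|\tau|)$, exactly as in Theorem \ref{H3-controlbis}; since $\partial_\tau G_\eps$ does not decay, this growth is genuine at the level of the energy method and is too weak for a time-uniform residual bound. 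Removing it is where the integrability of \eqref{GP} must be used: the uniform smallness of $R_\eps$ has to be read off from the conserved functionals of the Gross-Pitaevskii flow, whose leading order in the slow variables controls the relevant norm of the right-going wave and replaces the $\eps^2|\tau|$ defect by a bounded boundary contribution. I expect this conservation-law input — the ``relation between the invariants'' of the two equations — to be the delicate step; once it is available, combining it with the two displayed estimates immediately gives \eqref{jerrard}.
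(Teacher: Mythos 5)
Your derivation reproduces the paper's equation \eqref{slow1} exactly (your $R_\eps$ equals $2V_\eps$ in the paper's notation; beware that the paper uses the symbol $R_\eps$ for the remainder \eqref{grouin}), and your reduction of \eqref{jerrard} to (i) a uniform-in-time $H^3$ bound on $N_\eps$, $\partial_x\Theta_\eps$ and (ii) a time-uniform smallness estimate $\|N_\eps(\cdot,\tau)-\partial_x\Theta_\eps(\cdot,\tau)\|_{H^3(\R)}\leq K\big(\eps+\|N_\eps^0-\partial_x\Theta_\eps^0\|_{H^3(\R)}\big)$ is precisely the paper's argument, as is your observation that (ii) cannot come from an energy estimate on the fast transport equation (which only yields the secular bound of Theorem \ref{H3-controlbis}) but must be extracted from the conserved quantities of \eqref{GP}. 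The only thing you have missed is that the step you leave open as the ``delicate'' one is not open: it is exactly estimate \eqref{dobling1bis}, taken with the minus sign, of Proposition \ref{H3-control} --- the very proposition you already invoke for the uniform bounds \eqref{grinzing1bis}. The paper's entire proof of Theorem \ref{Bobby} consists of quoting \eqref{dobling1bis} and applying it, together with the Sobolev embedding theorem, to the right-hand side of \eqref{slow1}; the conservation-law mechanism you sketch (the relation $\boE_k\pm\sqrt{2}\,\boP_k=E_{k-1}^{KdV}\big(\tfrac{N_\eps\pm\partial_x\Theta_\eps}{2}\big)+\boO(\eps^2)$ of Proposition \ref{Controluv}) is indeed how \eqref{dobling1bis} is established in Section \ref{Cestpresquefini}, but within the proof of Theorem \ref{Bobby} it may simply be cited. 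So once your conditional closing sentence is replaced by an invocation of \eqref{dobling1bis}, your proof is complete and coincides with the paper's.
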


The relevance of the function $U_\eps$ will be discussed below.

A typical example where the assumptions of Theorem \ref{cochon} apply is provided by travelling wave solutions to \eqref{GP}, i.e. solutions of the form $\Psi(\x, t) = v_c(\x + c t)$, where the profile $v_c$ is a complex-valued function defined on $\R$ satisfying a simple ordinary differential equation which may be integrated explicitly. Solutions then do exist for any value of the speed $c$ in the interval $[0, \sqrt{2})$. Next, we choose the wave-length parameter to be $\varepsilon = \sqrt{2 - c^2}$, and take as initial data $\Psi_\varepsilon$ the corresponding wave $v_c$. We consider the rescaled function
$$\upnu_\varepsilon(x) = \frac{6}{\varepsilon^2} \eta_c \Big( \frac{\x}{\varepsilon} \Big),$$
where $\eta_c \equiv 1 - |v_c|^2$. The explicit integration of the travelling wave equation for $v_c$ leads to the formula
$$\upnu_\varepsilon(x) = \upnu(x) \equiv \frac{3}{\ch^2 \big( \frac{x}{2} \big)}.$$
The function $\upnu$ is the classical soliton to the Korteweg-de Vries equation, which is moved by the \eqref{KdV} flow with constant speed equal to $1$, so that
$$\boN_\varepsilon(x, \tau) = \upnu(x - \tau).$$
On the other hand, we deduce from \eqref{slow-var} that $N^0_\varepsilon = \upnu$, so that
$$N_\varepsilon(x, \tau) = \upnu \Big( x -\frac{4}{\eps^2} \Big( 1 - \sqrt{1 - \frac{\eps^2}{2}} \Big) \tau \Big).$$
Therefore, we have for any $\tau\in \R$,
$$\| \boN_\varepsilon(\cdot, \tau) - N_\varepsilon(\cdot, \tau) \|_{L^2(\R)} = \boO(\eps^2 \tau).$$
Concerning the phase $\varphi_c$ of $v_c$, we consider the scale change
$$\Theta_\varepsilon^0(x) = \frac{6 \sqrt{2}}{\varepsilon} \varphi_c \Big( \frac{x}{\varepsilon} \Big),$$
so that, in view of \cite{Graveja4},
$$\partial_x \Theta_\varepsilon^0(x) = \sqrt{1 - \frac{\varepsilon^2}{2}} \frac{\upnu(x)}{1 - \frac{\varepsilon^2}{6} \upnu(x)},$$
and hence,
$$\| N_\varepsilon^0 - \partial_x \Theta_\varepsilon^0 \|_{H^3(\R)} = \boO(\varepsilon^2).$$
This may suggest that the $\varepsilon$ error in inequality \eqref{eq:fortis} is not optimal. As a matter of fact, we believe that the optimal error term would be of order $\varepsilon^2$ (as mentioned in formula \eqref{fortisse3}). A proof of this claim would require to have higher order bounds on $N_\eps$ and $\partial_x \Theta_\eps$.

We next present some ideas in the proofs.
We infer from \eqref{GP} the equations for $N_\varepsilon$ and
$\Theta_\varepsilon$, namely
\begin{equation}
\label{slow1-0}
\partial_x N_\varepsilon - \partial_x^2 \Theta_\varepsilon + \frac{\varepsilon^2}{2} \Big( \frac{1}{2} \partial_\tau N_\varepsilon + \frac{1}{3} N_\varepsilon \partial_x^2 \Theta_\varepsilon + \frac{1}{3} \partial_x N_\varepsilon \partial_x \Theta_\varepsilon \Big) = 0,
\end{equation}
and
\begin{equation}
\label{slow2-0}
\partial_x \Theta_\varepsilon - N_\varepsilon + \frac{\varepsilon^2}{2} \Big( \frac{1}{2} \partial_\tau \Theta_\varepsilon + \frac{\partial_x^2 N_\varepsilon}{1 - \frac{\varepsilon^2}{6} N_\varepsilon} + \frac{1}{6} (\partial_x \Theta_\varepsilon)^2 \Big) + \frac{\varepsilon^4}{24} \frac{(\partial_x N_\varepsilon)^2}{(1 - \frac{\varepsilon^2}{6} N_\varepsilon)^2} = 0.
\end{equation}
The leading order in this expansion is provided by $N_\varepsilon - \partial_x \Theta_\varepsilon$ and its spatial derivative, so that an important step is to keep control on this term. In view of \eqref{slow1-0} and \eqref{slow2-0} and d'Alembert decomposition \eqref{heuriger}, we are led to introduce the new variables $U_\varepsilon$ and $V_\varepsilon$ defined by
$$U_\varepsilon = \frac{N_\varepsilon + \partial_x \Theta_\varepsilon}{2}, \ {\rm and} \ V_\varepsilon = \frac{N_\varepsilon - \partial_x \Theta_\varepsilon}{2},$$
and compute the relevant equations for $U_\varepsilon$ and $V_\varepsilon$,
\begin{equation}
\label{slow1}
\partial_\tau U_\varepsilon + \partial_x^3 U_\varepsilon + U_\varepsilon \partial_x U_\varepsilon = - \partial_x^3 V_\varepsilon + \frac{1}{3}\partial_x \big( U_\varepsilon V_\varepsilon \big) + \frac{1}{6} \partial_x \big( V_\varepsilon^2 \big) - \varepsilon^2 R_{\varepsilon},
\end{equation}
and
\begin{equation}
\label{slow2}
\partial_\tau V_\varepsilon + \frac{8}{\varepsilon^2} \partial_x V_\varepsilon = \partial_x^3 U_\varepsilon + \partial_x^3 V_\eps + \frac{1}{2} \partial_x (V_\varepsilon^2) - \frac{1}{6} \partial_x (U_\varepsilon)^2 -\frac{1}{3} \partial_x (U_\varepsilon V_\varepsilon) + \varepsilon^2 R_{\varepsilon},
\end{equation}
where the remainder term $R_{\varepsilon}$ is given by the formula
\begin{equation}
\label{grouin}
R_{\varepsilon} = \frac{N_\varepsilon\partial_x^3 N_\varepsilon}{6 (1 - \frac{\varepsilon^2}{6} N_\varepsilon)} + \frac{(\partial_x N_\varepsilon) (\partial_x^2 N_\varepsilon)}{3 (1 - \frac{\varepsilon^2}{6} N_\varepsilon)^2} + \frac{\varepsilon^2}{36} \frac{(\partial_x N_\varepsilon)^3}{(1 - \frac{\varepsilon^2}{6} N_\varepsilon)^3}.
\end{equation}
The left-hand side of equation \eqref{slow1} corresponds to the \eqref{KdV} operator applied to $U_\varepsilon$: a major step in the proof is therefore to establish that the right-hand side is small in suitable norms. This amounts in particular, as already mentioned, to show that $V_\varepsilon$, which is assumed to be small at time $\tau = 0$ remains small, and that $U_\varepsilon$, which is assumed to be bounded at time $\tau = 0$, remains bounded in appropriate Sobolev norm. To establish these estimates, we rely among other things on several conservation laws which are provided by the integrability of the one-dimensional \eqref{GP} equation. To illustrate the argument, we next present it for the $L^2$-norm, where we only need to invoke the conservation of energy and momentum.

In the rescaled setting, the Ginzburg-Landau energy may be written as
\begin{equation}
\label{sirbu1}
E(\Psi) = \frac{\varepsilon^3}{144} \Bigg( \int_\R \Big( (\partial_x \Theta_\varepsilon)^2 + N_\varepsilon^2 \Big) + \frac{\varepsilon^2}{2} \int_\R \bigg( \frac{(\partial_x N_\varepsilon)^2}{1 - \frac{\varepsilon^2}{6} N_\varepsilon} -\frac{1}{3} N_\varepsilon(\partial_x \Theta_\varepsilon)^2 \bigg) \Bigg)
\equiv \frac{\varepsilon^3}{18} \boE_1(N_\varepsilon, \Theta_\varepsilon),
\end{equation}
so that assumption \eqref{grinzing1} implies that
\begin{equation}
\label{pauli1}
\boE_1(N_\varepsilon^0, \Theta_\varepsilon^0) \leq K_0.
\end{equation}
On the other hand, when the energy $E(\Psi)$ is sufficiently small, which is the case at the limit $\eps \to 0$, we may assume that
$$\frac{1}{2} \leq |\Psi| \leq 2,$$
which may be translated as
\begin{equation}
\label{borninf}
\frac{1}{4} \leq 1 - \frac{\varepsilon^2}{6} N_\varepsilon \leq 4,
\end{equation}
so that the rescaled energy $\boE_1$ satisfies
\begin{equation}
\label{L2-control}
\int_\R \Big( (\partial_x \Theta_\varepsilon)^2 + N_\varepsilon^2 \Big) \leq K \boE_1(N_\varepsilon, \Theta_\varepsilon),
\end{equation}
where $K$ is some universal constant. Similarly, the momentum may be written as
\begin{equation}
\label{sirbu2}
P(\Psi) = \frac{1}{2} \int_\R \eta \partial_\x \varphi = \frac{\varepsilon^3}{72 \sqrt{2}} \int_\R N_\varepsilon \partial_x \Theta_\varepsilon \equiv \frac{\varepsilon^3}{18} \boP_1(N_\varepsilon, \Theta_\varepsilon).
\end{equation}
Next, we compute
$$\boE_1(N_\varepsilon, \Theta_\varepsilon) - \sqrt{2} \boP_1(N_\varepsilon, \Theta_\varepsilon) = \frac{1}{8} \int_\R (N_\varepsilon - \partial_x \Theta_\varepsilon)^2 + \frac{\varepsilon^2}{8} \int_\R \bigg( \frac{\partial_x N_\varepsilon^2}{1 - \frac{\varepsilon^2}{6} N_\varepsilon} - \frac{1}{3} N_\varepsilon (\partial_x \Theta_\varepsilon)^2 \bigg),$$
so that
\begin{equation}
\label{wachovien}
\Big| \boE_1(N_\varepsilon^0, \Theta_\varepsilon^0) - \sqrt{2} \boP_1(N_\varepsilon^0, \Theta_\varepsilon^0) \Big| \leq K_0.
\end{equation}
Moreover, by the Sobolev embedding theorem and the inequality $2 a b \leq a^2 + b^2$,
\begin{equation}
\label{eq:wachovia}
\boE_1(N_\varepsilon, \Theta_\varepsilon) - \sqrt{2} \boP_1(N_\varepsilon,
\Theta_\varepsilon) \geq K^{-1} \Big( \int_\R V_\varepsilon^2 + \varepsilon^2 \int_\R (\partial_x N_\varepsilon)^2\Big) - K \varepsilon^2 \bigg( \int_\R (\partial_x \Theta_\varepsilon)^2 \bigg)^2,
\end{equation}
where $K$ refers to some universal constant. By conservation, we then have
\begin{equation}
\label{conserve}
\frac{d}{d\tau} \big( \boE_1(N_\varepsilon, \Theta_\varepsilon) \big) = 0, \ {\rm and } \ \frac{d}{d\tau} \big( \boP_1(N_\varepsilon, \Theta_\varepsilon) \big) = 0.
\end{equation}
Invoking \eqref{pauli1} and \eqref{L2-control}, we are led to
$$\| N_\eps(\cdot, \tau)\|_{L^2(\R)}^2 + \|\partial_x \Theta_\eps(\cdot, \tau) \|_{L^2(\R)}^2 \leq K_0,$$
for any $\tau \in \R$. In turn, using \eqref{wachovien}, \eqref{eq:wachovia} and \eqref{conserve} yields
$$\|V_\eps(\cdot, \tau)\|_{L^2(\R)}^2 \leq K \big( \|V_\eps(0)\|_{L^2(\R)}^2 + \eps^2 \big).$$

It turns out that the other conservation laws for the Gross-Pitaevskii equation involve quantities which behave as higher order energies and others which behave as higher order momenta. We denote $\boE_k(N_\eps, \Theta_\eps)$ and $\boP_k(N_\eps, \Theta_\eps)$, respectively these quantities (precise expressions are provided in Section \ref{Invariants}). Using these invariants, we may perform a similar argument to control higher Sobolev norms. This gives

\begin{prop}
\label{H3-control}
Let $\Psi$ be a solution to \eqref{GP} in $\boC^0(\R, H^4(\R))$ with initial data $\Psi^0$. Assume that \eqref{grinzing1} holds. Then, there exists some positive constant $K$, which does not depend on $\varepsilon$ nor $\tau$, such that
\begin{equation}
\label{grinzing1bis}
\| N_\varepsilon(\cdot, \tau) \|_{H^3(\R)} + \eps \| \partial_x^4 N_\varepsilon(\cdot, \tau) \|_{L^2(\R)} + \|\partial_x \Theta_\eps(\cdot, \tau) \|_{H^3(\R)} \leq K,
\end{equation}
and
\begin{equation}
\label{dobling1bis}
\| N_\varepsilon(\cdot, \tau) \pm \partial_x \Theta_\varepsilon(\cdot, \tau) \|_{H^3(\R)} \leq K \big( \| N_\varepsilon^0 \pm \partial_x \Theta_\varepsilon^0 \|_{H^3(\R)} + \eps \big),
\end{equation}
for any $\tau \in \R$.
\end{prop}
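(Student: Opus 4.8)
The plan is to establish \eqref{grinzing1bis} and \eqref{dobling1bis} by an induction on the order of differentiation, reproducing at each level the $L^2$ argument already carried out for the energy $\boE_1$ and the momentum $\boP_1$. For each $k \in \{1, 2, 3, 4\}$, the integrability of \eqref{GP} supplies two functionals $\boE_k(N_\eps, \Theta_\eps)$ and $\boP_k(N_\eps, \Theta_\eps)$, obtained by rescaling the corresponding Gross-Pitaevskii invariants as in \eqref{sirbu1} and \eqref{sirbu2}, whose explicit form will be recorded in Section \ref{Invariants}, and which are conserved along the flow exactly as in \eqref{conserve}. The hypothesis $\Psi \in \boC^0(\R, H^4(\R))$, guaranteed for the given data by Theorem \ref{thm:existe}, is precisely what renders these functionals finite and their conservation rigorous, so that all the manipulations below are licit.

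The heart of the matter is a pair of coercivity estimates at each level $k$. First, the leading quadratic part of $\boE_k$ should dominate the top-order seminorm together with an $\eps^2$-weighted seminorm one order higher, namely
\begin{equation*}
\boE_k(N_\eps, \Theta_\eps) \geq K^{-1} \Big( \int_\R (\partial_x^{k-1} N_\eps)^2 + \int_\R (\partial_x^k \Theta_\eps)^2 + \eps^2 \int_\R (\partial_x^k N_\eps)^2 \Big) - (\text{corrections}),
\end{equation*}
which for $k = 1$ reduces to \eqref{L2-control} combined with \eqref{borninf}. Second, forming the combination $\boE_k - \sqrt{2} \boP_k$ (respectively $\boE_k + \sqrt{2} \boP_k$) should complete the square and yield control of $\int_\R \big( \partial_x^{k-1}(N_\eps - \partial_x \Theta_\eps) \big)^2$ (respectively with a $+$ sign), exactly as in the computation leading to \eqref{eq:wachovia}. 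The correction terms are sums of integrals of products of derivatives of $N_\eps$ and $\partial_x \Theta_\eps$, each either of strictly lower order or carrying an explicit $\eps^2$ prefactor.

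I would then run the induction as follows. Assuming \eqref{grinzing1bis} and \eqref{dobling1bis} hold for all orders up to $k-1$, every correction term above is bounded either directly by the induction hypothesis, or, when it involves the borderline top-order factor $\partial_x^k N_\eps$, by absorbing it through its $\eps^2$ prefactor against the $\eps$-weighted quantity $\eps \| \partial_x^k N_\eps \|_{L^2(\R)}$. Invoking the conservation \eqref{conserve} of $\boE_k$ and of $\boE_k \pm \sqrt{2} \boP_k$, together with the initial bound \eqref{pauli1} and its higher analogues following from \eqref{grinzing1}, then closes both the absolute bound \eqref{grinzing1bis} at order $k$ and the difference bound \eqref{dobling1bis}, just as \eqref{wachovien} and \eqref{eq:wachovia} closed the case $k = 1$.

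The main obstacle is the treatment of the nonlinear correction terms in the high-order invariants $\boE_3$ and $\boE_4$. Unlike the quadratic leading parts, the cubic and higher contributions can a priori contain a factor carrying the top-order derivative $\partial_x^4 N_\eps$, which is controlled only in the $\eps$-weighted norm $\eps \| \partial_x^4 N_\eps \|_{L^2(\R)}$ and not in $L^2(\R)$ itself. The delicate point is therefore to verify, by careful integration by parts and Sobolev/interpolation estimates, that each such term indeed comes with an $\eps^2$ gain sufficient to absorb this borderline derivative, so that the induction closes at exactly the regularity recorded in \eqref{grinzing1bis}. This is where the tedious but essential bookkeeping of the explicit invariants from Section \ref{Invariants} becomes unavoidable, and it is the reason why the present method is confined to the lower order invariants, as anticipated in the discussion preceding \eqref{fortisse3}.
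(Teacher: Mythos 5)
Your proposal follows essentially the same route as the paper's proof: conservation of the rescaled invariants $\boE_k$ and $\boP_k$, coercivity of the $\boE_k$ (the paper's Lemma \ref{Controlhk}) yielding \eqref{grinzing1bis}, and an induction on $k$ using the combinations $\boE_k \pm \sqrt{2} \boP_k$ to obtain \eqref{dobling1bis}. The only presentational difference is that the paper identifies these combinations, up to $\boO(\eps^2)$, with the \eqref{KdV} invariants $E_{k-1}^{KdV} \big( \frac{N_\eps \pm \partial_x \Theta_\eps}{2} \big)$ (Proposition \ref{Controluv} together with Lemma \ref{Kdvcontrol}), which is exactly your ``completed square plus lower-order or $\eps^2$-weighted corrections'', the lower-order nonlinear corrections being handled by the induction hypothesis just as you describe.
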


The proof of Theorem \ref{Bobby} follows directly from Proposition \ref{H3-control}.
Using a standard energy method applied to the system \eqref{slow1-0} and \eqref{slow2-0}
and taking advantage of the fact that the left-hand side of equation \eqref{slow2-0} is a transport operator with speed $\frac{8}{\eps^2}$, we obtain Theorem \ref{H3-controlbis}.
Finally, the proof of Theorem \ref{cochon} follows again from an energy method applied to the difference $W_\eps = N_\eps - \boN_\eps$ (and the equivalent for $\partial_x \Theta_\eps$).

\begin{remark}
It is worthwhile to stress that in the course of proving Proposition \ref{H3-control}, we have been led to prove a number of facts which, we believe, are of independent interest, and represent actually the bulk contribution of our paper. First, we have given expressions of the invariant quantities and proved that they are well-defined on the spaces $X^k(\R)$: their expressions are not a straightforward consequence of the inductive formulae for the conservation laws provided by the inverse scattering method of \cite{ShabZak2}. Indeed, various renormalizations have to be applied to give a sound mathematical meaning to the expressions. Moreover, we have rigorously established that these quantities are conserved by the \eqref{GP} flow in the appropriate functional spaces.

In a related direction, we have highlighted a strong and somewhat striking relationship between the \eqref{GP} invariants and the \eqref{KdV} invariants. More precisely, we have shown that, for any $1 \leq k \leq 4$ and for any functions in the appropriate spaces,
$$\boE_k(N, \partial_x \Theta) - \sqrt{2} \boP_k(N, \partial_x \Theta) = E_k^{KdV} \Big( \frac{N - \partial_x \Theta}{2} \Big) + \boO(\eps^2),$$
where $E_k^{KdV}$ refers to the \eqref{KdV} invariants (for more precise statements, see Proposition \ref{Controluv}). In particular, the \eqref{GP} invariants $\boE_k$ and $\boP_k$, as well as the \eqref{KdV} invariants, provide control on the $H^k$-norms.
\end{remark}

\begin{remark}
It would be of interest to investigate further the relationships between \eqref{GP} and \eqref{KdV}, in particular at the level of the spectral problems associated to the corresponding inverse scattering methods. Indeed, recall that $\eqref{KdV}$ can be resolved using scattering and inverse scattering methods for the linear Schr\"odinger equation
$$L_\boN(\Phi) = -\partial_x^2 \Phi + \boN \Phi,$$
whereas $\eqref{GP}$ is known to be tractable using the scattering and inverse scattering methods for the Dirac operator
$$D_\Psi(\Phi_1, \Phi_2) = i \begin{pmatrix} 1 + \sqrt{3} & 0\\0 & 1 - \sqrt{3} \end{pmatrix} \begin{pmatrix} \partial_x \Phi_1 \\ \partial_x \Phi_2 \end{pmatrix} + \begin{pmatrix} 0 & \Psi^* \\ \Psi & 0 \end{pmatrix} \begin{pmatrix} \Phi_1 \\ \Phi_2 \end{pmatrix}.$$
Besides, it is known that the Schr\"odinger equation is a nonrelativistic limit of the Dirac equation. Kutznetsov and Zakharov \cite{KuznZak1} suggest that this correspondence can be carried out in the asymptotic limit considered here. Notice however that rigorous scattering and inverse scattering methods require decay and regularity assumptions on the data (see e.g. \cite{GeraZha1} where the datum is required to decay at least as $|x|^{- 4}$, as well as its first three derivatives). 
\end{remark}

Let us emphasize again that our paper focuses on the left going waves. Our proof requires to impose conditions on the initial data to ensure that the right going wave is small. An interesting problem is to remove this assumption, i.e. to consider simultaneously both left and right going waves, and to study their interaction. We hope to handle this problem in a forthcoming paper, as well as the already mentioned optimal bounds.

The paper is organized as follows. The next section is devoted to properties of the Cauchy problem. In Section \ref{Invariants}, we compute the invariants of the \eqref{GP} flow needed for our proofs, and show that they are conserved. In Section \ref{Rescaledinv}, we recast these invariants in the asymptotics considered here, and show the convergence to the \eqref{KdV} invariants. In Section \ref{Notime}, we give the proofs to Proposition \ref{H3-control} and Theorem \ref{Bobby}. Finally, in Section \ref{Expansion}, we present the energy methods which yield the proofs to Theorems \ref{cochon} and \ref{H3-controlbis}.

While completing this work, we learned that D. Chiron and F. Rousset \cite{ChirRou2} were obtaining at the same time several results which are related to our analysis of the \eqref{KdV} limit, and also treated the higher dimensional case.

\begin{merci}
The authors are grateful to the referee for his forward looking remarks which helped to improve the manuscript.\\
A large part of this work was completed while the four authors were visiting the Wolfgang Pauli Institute in Vienna. We wish to thank warmly this institution, as well as Prof. Norbert Mauser for the hospitality and support. We are also thankful to Dr. Martin Sepp for fruitful digressions.\\
F.B., P.G. and D.S. are partially sponsored by project JC05-51279 of the Agence Nationale de la Recherche. J.-C. S. acknowledges support from project ANR-07-BLAN-0250 of the Agence Nationale de la Recherche.
\end{merci}

\numberwithin{cor}{section}
\numberwithin{equation}{section}
\numberwithin{lemma}{section}
\numberwithin{prop}{section}
\numberwithin{remark}{section}
\numberwithin{theorem}{section}
\section{Global well-posedness for the Gross-Pitaevskii equation}
\label{Gwp}

The purpose of this section is to present the proof of Theorem \ref{thm:existe}. It is presumably well-known to the experts, but we did not find it stated in the literature, and therefore we provide a proof here for the sake of completeness.

Notice that Gallo \cite{Gallo3} already established the local well-posedness of \eqref{GP} in the spaces $X^k(\R)$ for any $k \geq 1$ (see also \cite{Zhidkov1,Gerard2}). More precisely, we have

\begin{theorem}[\cite{Gallo3,Gerard2}]
\label{Augalop}
Let $k \geq 2$. Given any function $\Psi_0 \in X^k(\R^N)$, consider the unique solution $\Psi(\cdot, t)$ to \eqref{GP} in $\boC^0(\R, X^1(\R))$ with initial data $\Psi_0$. Then, there exist $(T_-, T_+) \in (0, + \infty]^2$ such that the map $t \mapsto \Psi(\cdot, t)$ belongs to $\boC^0((- T_-, T_+), X^k(\R))$. Moreover, either $T_+$ is equal to $+ \infty$, respectively $T_- = + \infty$, or
\begin{equation}
\label{impossible}
\| \partial_\x \Psi(\cdot, t) \|_{H^{k-1}(\R)} \to + \infty, \ {\rm as} \ t \to T_+ \ ({\rm resp.} \ t \to - T_-).
\end{equation}
If $\Psi_0$ belongs to $X^{k+2}(\R)$, then the map $t \mapsto \Psi(\cdot, t)$ belongs to $\boC^1((- T_-, T_+), X^k(\R))$ and $\boC^0((- T_-, T_+), X^{k+2}(\R))$. Moreover, the flow map $\Psi_0 \mapsto \Psi(\cdot, T)$ is continuous on $X^k(\R)$ for any fixed $- T_- < T < T_+$.
\end{theorem}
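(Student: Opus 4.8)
The plan is to upgrade the global energy-space solution to higher regularity by a priori energy estimates together with a continuation (blow-up) criterion. One starts from the solution $\Psi \in \boC^0(\R, X^1(\R))$ furnished by the energy-space theory and shows that the $H^{k-1}$-regularity of $\partial_\x\Psi$ propagates. A preliminary step, crucial because of the nonzero condition at infinity, is to secure a global pointwise bound $\|\Psi(\cdot,t)\|_{L^\infty(\R)} \leq C\big(E(\Psi_0)\big)$. In one dimension this follows from a standard argument: $\partial_\x\Psi \in L^2(\R)$ makes $\Psi$ uniformly continuous, and combined with $1 - |\Psi|^2 \in L^2(\R)$ it forces $|\Psi|$ to be bounded with $|\Psi| \to 1$ at $\pm\infty$, the bound being controlled by the conserved energy. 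Once $\Psi \in L^\infty(\R)$, one also gets $\partial_\x(1 - |\Psi|^2) = -2\Re(\overline{\Psi}\,\partial_\x\Psi) \in L^2(\R)$, so that $1 - |\Psi|^2 \in H^1(\R) \hookrightarrow L^\infty(\R)$ with quantitative control.

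The core of the argument is an energy estimate at the level $H^{k-1}$. Writing $\Psi^{(j)} = \partial_\x^j\Psi$ and differentiating \eqref{GP} $j$ times gives
$$i\partial_t \Psi^{(j)} + \partial_\x^2 \Psi^{(j)} = \partial_\x^j\big(\Psi(|\Psi|^2 - 1)\big).$$
Multiplying by $\overline{\Psi^{(j)}}$, integrating over $\R$, and taking the imaginary part, the dispersive term drops out (it is real after integration by parts), leaving
$$\frac{1}{2}\frac{d}{dt}\|\Psi^{(j)}\|_{L^2(\R)}^2 = \Im \int_\R \partial_\x^j\big(\Psi(|\Psi|^2 - 1)\big)\,\overline{\Psi^{(j)}}.$$
The linear contribution $-\Psi$ is harmless, while the cubic term $|\Psi|^2\Psi$ is handled by tame (Moser-type) product estimates and Kato-Ponce commutator bounds: the top-order term, in which all derivatives fall on a single factor, produces after integration by parts a quantity controlled by $\|\partial_\x\Psi\|_{L^\infty}$ times $\|\Psi^{(j)}\|_{L^2}^2$, and the remaining terms are of lower order. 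Summing over $0 \leq j \leq k-1$ and using the $L^\infty$ bound on $\Psi$ together with the embedding $H^{k-1}(\R) \hookrightarrow W^{1,\infty}(\R)$ valid for $k \geq 2$, one obtains a differential inequality of the form
$$\frac{d}{dt}\|\partial_\x\Psi\|_{H^{k-1}(\R)}^2 \leq C\big(E(\Psi_0)\big)\,\Phi\big(\|\partial_\x\Psi\|_{H^{k-1}(\R)}\big)$$
for a suitable continuous function $\Phi$.

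Grönwall's lemma then yields local-in-time control of $\|\partial_\x\Psi\|_{H^{k-1}(\R)}$ and, by a standard continuation argument, the maximal existence interval $(-T_-, T_+)$ together with the blow-up alternative \eqref{impossible}: if $T_+ < +\infty$, then $\|\partial_\x\Psi(\cdot,t)\|_{H^{k-1}(\R)} \to +\infty$ as $t \to T_+$, and likewise at $-T_-$. The $\boC^1$ and $X^{k+2}$ statements follow by reading the equation itself: if $\Psi_0 \in X^{k+2}(\R)$, then $\partial_\x^2\Psi \in \boC^0((-T_-,T_+), X^k(\R))$ and the nonlinearity lies in the same class, so that $\partial_t\Psi = i\partial_\x^2\Psi - i\Psi(|\Psi|^2 - 1)$ belongs to $\boC^0((-T_-,T_+), X^k(\R))$, whence $\Psi \in \boC^1((-T_-,T_+), X^k(\R))$. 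Continuity of the flow map is obtained by energy estimates on the difference of two solutions, combined with a Bona-Smith type regularization to close the argument at the endpoint regularity.

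I expect the main obstacle to be twofold. First, the nonzero boundary condition at infinity precludes any direct use of $L^2$-based machinery on $\Psi$ itself; every estimate must be phrased through $\partial_\x\Psi$ and $1 - |\Psi|^2$, and one must secure the global $L^\infty$ bound on $\Psi$ before the nonlinear estimates acquire any meaning. Second, the continuity of the flow map at the critical endpoint regularity is delicate: a naive difference estimate loses a derivative through the cubic nonlinearity, so one must either smooth the data in the spirit of Bona-Smith or exploit a frequency decomposition to recover continuity without assuming extra regularity.
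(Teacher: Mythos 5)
Your proposal cannot be checked against an in-paper proof, because the paper does not prove Theorem \ref{Augalop}: it is quoted from Gallo \cite{Gallo3} and G\'erard \cite{Gerard2}, and the only related argument the paper carries out itself is the globalization in Theorem \ref{thm:existe}, which takes Theorem \ref{Augalop} as a black box and combines the Duhamel formula, the fact that $S(t)$ maps $X^k(\R)$ into $X^k(\R)$, the tame estimate of Lemma \ref{Tamise}, the $L^\infty$ bound \eqref{holtz} and energy conservation. Measured on its own merits, your sketch assembles the right ingredients (the $L^\infty$ bound from the energy, the vanishing of the dispersive term under $\Im$, tame product estimates), but it has a genuine gap: you never construct an $X^k(\R)$-valued solution. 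All of your computations --- differentiating \eqref{GP} $j$ times, pairing with $\overline{\partial_\x^j \Psi}$, integrating by parts in space, and differentiating $t \mapsto \| \partial_\x^j \Psi(\cdot,t) \|_{L^2(\R)}^2$ --- are performed on a function only known to lie in $\boC^0(\R, X^1(\R))$, i.e.\ they presuppose exactly the regularity you are trying to establish. A priori estimates propagate bounds for solutions already known to be regular; they do not by themselves upgrade regularity, and since the statement at issue \emph{is} the foundational local theory, you cannot invoke any higher-regularity existence result to legitimize them without circularity. (Your Bona--Smith remark concerns only the flow map, not this justification step.)

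The standard repair, and in substance the route of \cite{Gallo3,Gerard2}, is a contraction-mapping argument on the Duhamel formulation $\Psi(\cdot,t) = S(t)\Psi_0 - \int_0^t S(t-s)\big[\Psi(1-|\Psi|^2)\big](\cdot,s)\,ds$ in $\boC^0([-T,T], X^k(\R))$: the group preserves $X^k(\R)$, the nonlinearity is locally Lipschitz there by tame estimates of the type \eqref{clyde}, uniqueness in $X^1(\R)$ identifies the fixed point with the given global energy solution, and the fixed-point structure yields the blow-up alternative \eqref{impossible} \emph{and} local Lipschitz continuity of the flow map for free. This last point also corrects a misconception in your closing paragraph: the \eqref{GP} nonlinearity $\Psi(|\Psi|^2-1)$ contains no derivatives, so the equation is semilinear and difference estimates lose nothing; Bona--Smith regularization is a device for quasilinear problems (KdV-type) and is only forced on you because you chose the pure energy-method framework. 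Two further slips: your sum over $0 \leq j \leq k-1$ includes $j=0$, but $\Psi \notin L^2(\R)$ by the boundary condition \eqref{bdinfini}, so the estimates must run over $\partial_\x^j \Psi$ for $1 \leq j \leq k$, with $1-|\Psi|^2$ handled as a separate $H^k$ quantity; and the assertion that $X^{k+2}$ regularity persists on the \emph{same} interval $(-T_-,T_+)$ determined by the $X^k$ blow-up does not follow from merely ``reading the equation'' --- it requires the tame (linear-in-top-norm) structure of the estimates, precisely as in the paper's proof of Theorem \ref{thm:existe}.
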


In view of Theorem \ref{Augalop}, the proof of Theorem \ref{thm:existe} reduces to establish that the $H^{k - 1}$-norm of the function $\partial_\x \Psi$ cannot blow-up in finite time. In \cite{Gallo3,Gerard1}, it is proved that the linear Schr\"odinger propagator $S(t)$ maps $X^k(\R)$ into $X^k(\R)$, so that we may invoke the Duhamel formula
$$\Psi(\cdot, t) = S(t) \Psi_0 - \int_0^t S(t-s) \Psi(\cdot, s) \big( 1 - |\Psi(\cdot, s)|^2 \big) ds,$$
to estimate the $H^{k - 1}$-norm of the function $\partial_\x \Psi$ by
\begin{equation}
\label{grogne}
\| \partial_\x \Psi(\cdot, t) \|_{H^{k-1}(\R)} \leq \| \partial_\x \Psi_0 \|_{H^{k-1}(\R)} + \bigg| \int_0^t \| \partial_\x \big( \Psi(\cdot, s) \big( 1 - |\Psi(\cdot, s)|^2 \big) \big) \|_{H^{k-1}(\R)} ds \bigg|.
\end{equation}
To estimate the second term on the left-hand side, we invoke the following tame estimates.

\begin{lemma}
\label{Tamise}
Let $k \geq 1$ and $(\psi_1,\psi_2) \in X^k(\R)^2$. Given any $1 \leq j \leq k$, there exists some constant $K(j, k)$, depending only on $j$ and $k$, such that
\begin{equation}
\label{clyde}
\big\| \partial_\x^j \big( \psi_1 \psi_2 \big) \big\|_{L^2(\R)} \leq K(j, k) \Big( \| \psi_1 \|_{L^\infty(\R)} \| \partial_\x^k \psi_2 \|_{L^2(\R)} + \| \psi_2 \|_{L^\infty(\R)} \| \partial_\x^k \psi_1 \|_{L^2(\R)} \Big).
\end{equation}
\end{lemma}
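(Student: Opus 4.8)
The plan is to establish the estimate by the classical combination of the Leibniz rule, Gagliardo--Nirenberg interpolation, and Young's inequality, treating the two extreme terms of the expansion directly and the interior ones by interpolation. First I would observe that the assumption $k \geq 1$ guarantees, through the finiteness of the energy and the embedding of $X^1(\R)$ into $L^\infty(\R)$, that $\| \psi_1 \|_{L^\infty(\R)}$ and $\| \psi_2 \|_{L^\infty(\R)}$ are finite, so that the right-hand side is meaningful. Expanding by the Leibniz rule,
$$\partial_\x^j (\psi_1 \psi_2) = \sum_{l = 0}^j \binom{j}{l} \partial_\x^l \psi_1 \, \partial_\x^{j - l} \psi_2,$$
the terms $l = 0$ and $l = j$ are already of the desired form, since by H\"older's inequality $\| \psi_1 \, \partial_\x^j \psi_2 \|_{L^2(\R)} \leq \| \psi_1 \|_{L^\infty(\R)} \| \partial_\x^j \psi_2 \|_{L^2(\R)}$, and symmetrically for $l = j$.

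For an interior index $1 \leq l \leq j - 1$, I would apply H\"older's inequality with the conjugate exponents $p = \frac{2j}{l}$ and $q = \frac{2j}{j - l}$, which satisfy $\frac{1}{p} + \frac{1}{q} = \frac{1}{2}$, to obtain
$$\big\| \partial_\x^l \psi_1 \, \partial_\x^{j - l} \psi_2 \big\|_{L^2(\R)} \leq \| \partial_\x^l \psi_1 \|_{L^p(\R)} \, \| \partial_\x^{j - l} \psi_2 \|_{L^q(\R)}.$$
Each factor is then interpolated by the one-dimensional Gagliardo--Nirenberg inequality between its $L^\infty$ norm and a top-order $L^2$ derivative norm, namely $\| \partial_\x^l \psi_1 \|_{L^p(\R)} \leq K \| \psi_1 \|_{L^\infty(\R)}^{1 - l/j} \| \partial_\x^j \psi_1 \|_{L^2(\R)}^{l/j}$ and $\| \partial_\x^{j - l} \psi_2 \|_{L^q(\R)} \leq K \| \psi_2 \|_{L^\infty(\R)}^{l/j} \| \partial_\x^j \psi_2 \|_{L^2(\R)}^{(j - l)/j}$; the interpolation exponents are precisely the scale-invariant ones, which is exactly what dictates the above choice of $p$ and $q$. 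Writing $X = \| \psi_2 \|_{L^\infty(\R)} \| \partial_\x^j \psi_1 \|_{L^2(\R)}$ and $Y = \| \psi_1 \|_{L^\infty(\R)} \| \partial_\x^j \psi_2 \|_{L^2(\R)}$, the product of the two interpolation bounds equals exactly $X^{l/j} Y^{(j - l)/j}$, and since the two exponents sum to one, Young's inequality gives $X^{l/j} Y^{(j - l)/j} \leq X + Y$. Summing over $l$ against the binomial coefficients then yields the estimate with the top order $j$ appearing on the right-hand side.

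The step I expect to be genuinely delicate is the passage from this top-order-$j$ right-hand side to the order $k$ that figures in the statement, that is, the replacement of $\| \partial_\x^j \psi_i \|_{L^2(\R)}$ by $\| \partial_\x^k \psi_i \|_{L^2(\R)}$ under the sole hypothesis $1 \leq j \leq k$. This is where the full $X^k(\R)$ regularity, rather than mere $X^j(\R)$ regularity, must enter, and where the dilation scaling of the two sides has to be reconciled, since on the whole line $\| \partial_\x^j \psi_i \|_{L^2(\R)}$ and $\| \partial_\x^k \psi_i \|_{L^2(\R)}$ do not share the same homogeneity. I would therefore isolate this reduction as the main technical point, handling it through the interpolation inequalities available for the intermediate derivatives on $X^k(\R)$ together with the hypothesis $j \leq k$. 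The remaining bookkeeping --- collecting the binomial constants into a single $K(j, k)$ and verifying that the Gagliardo--Nirenberg exponents lie in the admissible range for each $l$ --- is routine.
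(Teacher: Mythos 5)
Your first half is correct and complete: Leibniz, H\"older with the exponents $2j/l$ and $2j/(j-l)$, the Gagliardo--Nirenberg inequalities (which are indeed valid for bounded functions whose derivatives lie in $L^2$, so no decay at infinity is needed), and Young's inequality yield the homogeneous tame estimate
\[
\big\| \partial_\x^j (\psi_1 \psi_2) \big\|_{L^2(\R)} \leq K(j) \Big( \| \psi_1 \|_{L^\infty(\R)} \| \partial_\x^j \psi_2 \|_{L^2(\R)} + \| \psi_2 \|_{L^\infty(\R)} \| \partial_\x^j \psi_1 \|_{L^2(\R)} \Big),
\]
with the \emph{same} order $j$ on both sides; this is more self-contained than the paper's route, which truncates by cut-offs $\chi_R$, invokes ``standard tame estimates'' for the compactly supported functions $\chi_R \psi_i$ (inequality \eqref{coe}), and passes to the limit $R \to + \infty$. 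But the step you postponed --- upgrading $\| \partial_\x^j \psi_i \|_{L^2(\R)}$ to $\| \partial_\x^k \psi_i \|_{L^2(\R)}$ when $j < k$ --- is not merely delicate: it cannot be done, because \eqref{clyde} as stated is false whenever $1 \leq j < k$. The scaling obstruction you sensed is fatal, not technical. Take $\psi_1 = \psi_2 = 1 + v(\lambda \, \cdot)$ with $v \in \boC^\infty(\R)$ compactly supported, nonzero, $\| v \|_{L^\infty(\R)} \leq 1/2$; these functions belong to $X^k(\R)$ for every $\lambda > 0$. Then $\psi_1 \psi_2 = 1 + w(\lambda \, \cdot)$ with $w = 2v + v^2$ compactly supported and nonzero, so the left-hand side of \eqref{clyde} equals $\lambda^{j - \frac{1}{2}} \| \partial_\x^j w \|_{L^2(\R)}$, while the right-hand side is at most $3 K(j,k) \, \lambda^{k - \frac{1}{2}} \| \partial_\x^k v \|_{L^2(\R)}$; letting $\lambda \to 0$ shows that no constant $K(j,k)$ can exist when $j < k$. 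This also explains why the interpolation you hoped to use must fail: the correct inequality is $\| \partial_\x^j \psi \|_{L^2(\R)} \leq K \| \psi \|_{L^\infty(\R)}^{1 - \theta} \| \partial_\x^k \psi \|_{L^2(\R)}^{\theta}$ with $\theta = \frac{2j-1}{2k-1}$, and Young's inequality then unavoidably produces the additional term $\| \psi_1 \|_{L^\infty(\R)} \| \psi_2 \|_{L^\infty(\R)}$, absent from \eqref{clyde} --- and the counterexample shows precisely that this term cannot be dispensed with.

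You should also know that the paper's own proof breaks at exactly the same point: its inequality \eqref{coe} asserts, for $j < k$ and with a constant independent of $R$, the very estimate that the dilation argument above rules out (let $R \to + \infty$); the genuinely standard tame estimates carry the same derivative order on both sides. The statement that is both true and sufficient for the only use made of Lemma \ref{Tamise} (namely the passage from \eqref{grogne} to \eqref{wall-e} in the proof of Theorem \ref{thm:existe}) is obtained by replacing $\| \partial_\x^k \psi_i \|_{L^2(\R)}$ in \eqref{clyde} with $\| \partial_\x \psi_i \|_{H^{k-1}(\R)}$: since $\| \partial_\x^j \psi_i \|_{L^2(\R)} \leq \| \partial_\x \psi_i \|_{H^{k-1}(\R)}$ for $1 \leq j \leq k$, that corrected version follows immediately from the order-$j$ estimate you have already proved. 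So the way to complete your proposal is not to repair the deferred reduction, but to delete it and weaken the statement of the lemma accordingly.
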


We postpone the proof of Lemma \ref{Tamise} and first complete the proof of Theorem \ref{thm:existe}.

\begin{proof}[Proof of Theorem \ref{thm:existe}]
In view of \eqref{clyde}, inequality \eqref{grogne} yields
\begin{equation}
\label{wall-e}
\| \partial_\x \Psi(\cdot, t) \|_{H^{k-1}(\R)} \leq \| \partial_\x \Psi_0 \|_{H^{k-1}(\R)} + K(k) \bigg| \int_0^t \big( 1 + \| \Psi(\cdot, s) \|_{L^\infty(\R)}^2 \big) \| \partial_\x \Psi(\cdot, s) \|_{H^{k-1}(\R)} ds \bigg|,
\end{equation}
where $K(k)$ is some constant depending only on $k$. Notice that $\Psi_0$ belongs to $X^1(\R)$, so that in view of the conservation of energy proved in \cite{Gerard1} (see Theorem \ref{ConsE} below), we have
$$E(\Psi(\cdot, t)) = E(\Psi_0).$$
Next, given any function $\psi \in X^1(\R)$, there exists some universal positive constant $K$ such that
\begin{equation}
\label{holtz}
\| \psi \|_{L^\infty(\R)} \leq K \big( 1 + E(\psi) \big)^\frac{1}{2}.
\end{equation}
In particular, it follows from \eqref{holtz} that
$\| \Psi(s) \|_{L^\infty(\R)} \leq K \big( 1 + E(\Psi_0) \big)^\frac{1}{2},$
so that by \eqref{wall-e}, we are led to
$$\| \partial_\x \Psi(\cdot, t) \|_{H^{k-1}(\R)} \leq K(k, \Psi_0) \bigg( 1 + \bigg| \int_0^t \| \partial_\x \Psi(\cdot, s) \|_{H^{k-1}(\R)} ds \bigg| \bigg),$$
where $K(k, \Psi_0)$ is some constant only depending on $k$, $E(\Psi_0)$ and $\| \partial_\x \Psi_0 \|_{H^{k-1}(\R)}$. Therefore, we have by integration
$$\| \partial_\x \Psi(\cdot, t) \|_{H^{k-1}(\R)} \leq K(k, \Psi_0) \exp \big( K(k, \Psi_0) |t| \big), $$
and it follows, going back to \eqref{impossible}, that
$$T_- = T_+ = + \infty,$$
which completes the proof.
\end{proof}

We now provide the proof of Lemma \ref{Tamise}.

\begin{proof}[Proof of Lemma \ref{Tamise}]
We introduce some cut-off function $\chi \in \boC^\infty(\R, [0, 1])$ such that
\begin{equation}
\label{ness}
\chi = 1 \ {\rm on} \ (-1, 1), \ {\rm and} \ \chi = 0 \ {\rm on} \ \R \setminus (-2, 2),
\end{equation}
and set
\begin{equation}
\label{maree}
\chi_R(\x) = \chi \Big( \frac{\x}{R} \Big), \ \forall \x \in \R,
\end{equation}
for any $R > 1$. Using standard tame estimates, we have
\begin{equation}
\begin{split}
\label{coe}
& \big\| \partial_\x^j \big( \chi_R \psi_1 \chi_R \psi_2 \big) \big\|_{L^2(\R)}\\
\leq K(j, k) \Big( \| & \chi_R \psi_1 \|_{L^\infty(\R)} \| \partial_\x^k (\chi_R \psi_2) \|_{L^2(\R)} + \| \chi_R \psi_2 \|_{L^\infty(\R)} \| \partial_\x^k (\chi_R \psi_1) \|_{L^2(\R)} \Big)\\
\leq K(j, k) \Big( \| & \psi_1 \|_{L^\infty(\R)} \| \partial_\x^k (\chi_R \psi_2) \|_{L^2(\R)} + \| \psi_2 \|_{L^\infty(\R)} \| \partial_\x^k (\chi_R \psi_1) \|_{L^2(\R)} \Big).
\end{split}
\end{equation}
We now claim that
\begin{equation}
\label{affric}
\| \partial_\x^j (\chi_R \psi) \|_{L^2(\R)} \to \| \partial_\x^j \psi \|_{L^2(\R)}, \ {\rm as} \ R \to + \infty.
\end{equation}
for any function $\psi \in X^k(\R)$ and any $1 \leq j \leq k$. As a matter of fact, by the Leibniz formula, we have
\begin{equation}
\label{leibniz}
\partial_\x^j (\chi_R \psi) = \sum_{m = 1}^j C_j^m \partial_\x^m \chi_R \partial_\x^{j - m} \psi.
\end{equation}
We next deduce from the dominated convergence theorem that
$$\chi_R \partial_\x^j \psi \to \partial_\x^j \psi \ {\rm in} \ L^2(\R), \ {\rm as} \ R \to + \infty,$$
whereas, when $m \geq 1$, we similarly have using \eqref{ness} and \eqref{maree},
\begin{align*}
\int_\R \big| \partial_\x^m \chi_R \partial_\x^{j - m} \psi \big|^2 = & \frac{1}{R^{2 m-1}} \bigg( \int_1^2 \big| \partial_\x^m \chi(x) \partial_\x^{j - m} \psi(R x) \big|^2 dx + \int_{- 2}^{- 1} \big| \partial_\x^j \chi(x) \partial_\x^{j - m} \psi(R x) \big|^2 dx \bigg)\\
\leq & \frac{K}{R^{2 m - 1}} \| \partial_\x^{j - m} \psi \|_{L^\infty(\R)}^2 \to 0, \ {\rm as} \ R \to + \infty.
\end{align*}
Hence, in view of \eqref{leibniz}, we are led to
$$\partial_\x^j (\chi_R \psi) \to \partial_\x^j \psi \ {\rm in} \ L^2(\R), \ {\rm as} \ R \to + \infty,$$
which ends the proof of claim \eqref{affric}. Combining \eqref{coe} with \eqref{affric}, and noticing that \eqref{affric} remains valid replacing $\chi_R$ by $\chi_R^2$, we obtain \eqref{clyde} at the limit $R \to + \infty$. This concludes the proof of Lemma \ref{Tamise}.
\end{proof}

\section{Invariants of the Gross-Pitaevskii equations}
\label{Invariants}

\subsection{Formal derivation of the invariants}
\label{Formol}

In \cite{ShabZak2}, Shabat and Zakharov established that the one-dimensional Gross-Pitaevskii equation is integrable, and admits an infinite number of conservation laws $f_n(\Psi)$, leading to an infinite family of invariants $I_n(\Psi)$. Set 
\begin{equation}
\label{f1}
f_1(\Psi) = - \frac{1}{2} |\Psi|^2.
\end{equation}
and let
\begin{equation}
\label{recurfn}
f_{n+1}(\Psi) = \overline{\Psi} \partial_\x \Big( \frac{f_n(\Psi)}{\overline{\Psi}} \Big) + \sum_{j=1}^{n-1} f_j(\Psi) f_{n-j}(\Psi).
\end{equation}
Using the inverse scattering method, it is shown formally in \cite{ShabZak2} that the functions $f_n(\Psi)$ are conservation laws for \eqref{GP}, so that the related integral quantities $I_n(\Psi)$ defined by
\begin{equation}
\label{densities}
I_n(\Psi) = \int_{\R} \big( f_n(\Psi)(\x) - f_n(\Psi)(\infty) \big) d\x,
\end{equation}
are invariants for \eqref{GP}. Here, the notation $f_n(\Psi)(\infty)$ stands for the limit at infinity of the map $f_n(\Psi)$ assuming that
$$\Psi(\x) \to 1, \ {\rm as} \ |\x| \to + \infty, \ {\rm and} \ \partial_\x^k \Psi(\x) \to 0, \ {\rm as} \ |\x| \to + \infty,$$
for any $k \in \N^*$. The first five conservation laws are computed in \cite{ShabZak2}, namely \eqref{f1} and
\begin{align}
\label{f2}
f_2(\Psi) & = - \frac{1}{2} \overline{\Psi} \partial_\x \Psi,\\
\label{f3}
f_3(\Psi) & = - \frac{1}{2} \overline{\Psi} \partial_\x^2 \Psi + \frac{1}{4} |\Psi|^4,\\
\label{f4}
f_4(\Psi) & = - \frac{1}{2} \overline{\Psi} \partial_\x^3 \Psi + |\Psi|^2 \overline{\Psi} \partial_\x \Psi + \frac{1}{4} |\Psi|^2 \Psi \partial_\x \overline{\Psi},\\
\label{f5}
f_5(\Psi) & = - \frac{1}{2} \overline{\Psi} \partial_\x^4 \Psi + \frac{3}{2} |\Psi|^2 \overline{\Psi} \partial_\x^2 \Psi + \frac{1}{4} |\Psi|^2 \Psi \partial_\x^2 \overline{\Psi} + \frac{3}{2} |\Psi|^2 |\partial_\x \Psi|^2 + \frac{5}{4} (\overline{\Psi})^2 (\partial_\x \Psi)^2 - \frac{1}{4} |\Psi|^6.
\end{align}

The purpose of this section is to give a rigorous meaning to these quantities, to prove that they are conserved, and to extend the explicit list of invariants. As a matter of fact, these invariants enter directly in our analysis of the transonic limit.

The first step is to compute the additional conservation laws using formula \eqref{recurfn}. Notice first that formula \eqref{recurfn} is singular at the points where $\Psi$ vanishes. A first task is therefore to show that \eqref{recurfn} can be used to define the functionals $f_n(\Psi)$ even in the case the function $\psi$ vanishes somewhere. To remove the singularity in \eqref{recurfn}, we check by induction that the function $f_n(\Psi)$ may be written as
\begin{equation}
\label{formfn}
f_n(\Psi) = \overline{\Psi} \times \boF_n(\Psi),
\end{equation}
where the map $\boF_n$ is inductively defined by
\begin{equation}
\label{boF1}
\boF_1(\Psi) = - \frac{\Psi}{2},
\end{equation}
and
\begin{equation}
\label{recurFn}
\boF_{n+1}(\Psi) = \partial_\x \boF_n(\Psi) + \overline{\Psi} \sum_{j=1}^{n-1} \boF_j(\Psi) \boF_{n-j}(\Psi).
\end{equation}
In particular, the map $\boF_n(\Psi)$ is a polynomial functional of the functions $\Psi$, $\overline{\Psi}$, $\cdots$, $\partial_\x^{n-2} \Psi$, $\partial_\x^{n-2} \overline{\Psi}$ and $\partial_\x^{n-1} \Psi$, which is defined without additional assumptions on $\Psi$. This leads to explicit expressions of $f_6(\Psi)$, $f_7(\Psi)$, $f_8(\Psi)$ and $f_9(\Psi)$, which are given by
\begin{align*}
f_6(\Psi) = & - \frac{1}{2} \overline{\Psi} \partial_\x^5 \Psi + 2 |\Psi|^2 \overline{\Psi} \partial_\x^3 \Psi + \frac{1}{4} |\Psi|^2 \Psi \partial_\x^3 \overline{\Psi} + 2 |\Psi|^2 \partial_\x \Psi \partial_\x^2 \overline{\Psi} + 3 |\Psi|^2 \partial_\x \overline{\Psi} \partial_\x^2 \Psi\\
\notag & + \frac{9}{2} (\overline{\Psi})^2 \partial_\x \Psi \partial_\x^2 \Psi + \frac{11}{4} |\partial_\x \Psi|^2 \overline{\Psi} \partial_\x \Psi - \frac{3}{4} |\Psi|^4 \Psi \partial_\x \overline{\Psi} - 2 |\Psi|^4 \overline{\Psi} \partial_\x \Psi,\\
f_7(\Psi) = & - \frac{1}{2} \overline{\Psi} \partial_\x^6 \Psi + \frac{1}{4} |\Psi|^2 \Psi \partial_\x^4 \overline{\Psi} + \frac{5}{2} |\Psi|^2 \overline{\Psi} \partial_\x^4 \Psi + \frac{5}{2} |\Psi|^2 \partial_\x \Psi \partial_\x^3 \overline{\Psi} + 5 |\Psi|^2 \partial_\x \overline{\Psi} \partial_\x^3 \Psi\\
\notag & + 7 (\overline{\Psi})^2 \partial_\x \Psi \partial_\x^3 \Psi + 5 |\Psi|^2 |\partial_\x^2 \Psi|^2 + \frac{19}{4} (\partial_\x \Psi)^2 \overline{\Psi} \partial_\x^2 \overline{\Psi}+ \frac{19}{4} (\overline{\Psi})^2 (\partial_\x^2 \Psi)^2 + 13 |\partial_\x \Psi|^2 \overline{\Psi} \partial_\x^2 \Psi\\
\notag & - |\Psi|^4 \Psi \partial_\x^2 \overline{\Psi} - \frac{15}{4} |\Psi|^4 \overline{\Psi} \partial_\x^2 \Psi - \frac{3}{4} |\Psi|^2 (\Psi)^2 (\partial_\x \overline{\Psi})^2 - 8 |\Psi|^4 |\partial_\x \Psi|^2 - \frac{25}{4} |\Psi|^2 (\overline{\Psi})^2 (\partial_\x \Psi)^2\\
\notag & + \frac{5}{16} |\Psi|^8,\\
f_8(\Psi) = & - \frac{1}{2} \overline{\Psi} \partial_\x^7 \Psi + \frac{1}{4} |\Psi|^2 \Psi \partial_\x^5 \overline{\Psi} + 3 |\Psi|^2 \overline{\Psi} \partial_\x^5 \Psi + 3 |\Psi|^2 \partial_\x \Psi \partial_\x^4 \overline{\Psi} + \frac{15}{2} |\Psi|^2 \partial_\x \overline{\Psi} \partial_\x^4 \Psi\\
\notag & + 10 (\overline{\Psi})^2 \partial_\x \Psi \partial_\x^4 \Psi + \frac{15}{2} |\Psi|^2 \partial_\x^2 \Psi \partial_\x^3 \overline{\Psi} + \frac{29}{4} (\partial_\x \Psi)^2 \overline{\Psi} \partial_\x^3 \overline{\Psi} + 10 |\Psi|^2 \partial_\x^2 \overline{\Psi} \partial_\x^3 \Psi\\
\notag & + 17 (\overline{\Psi})^2 \partial_\x^2 \Psi \partial_\x^3 \Psi + 25 |\partial_\x \Psi|^2 \overline{\Psi} \partial_\x^3 \Psi + \frac{55}{2} |\partial_\x^2 \Psi|^2 \overline{\Psi} \partial_\x \Psi + \frac{71}{4} (\partial_\x^2 \Psi)^2 \overline{\Psi} \partial_\x \overline{\Psi} - \frac{5}{4} |\Psi|^4 \Psi \partial_\x^3 \overline{\Psi}\\
\notag & - 6 |\Psi|^4 \overline{\Psi} \partial_\x^3 \Psi - \frac{5}{2} |\Psi|^2 (\Psi)^2 \partial_\x \overline{\Psi} \partial_\x^2 \overline{\Psi} - \frac{53}{4} |\Psi|^4 \partial_\x \Psi \partial_\x^2 \overline{\Psi} - \frac{75}{4} |\Psi|^4 \partial_\x \overline{\Psi} \partial_\x^2 \Psi\\
\notag & - 27 |\Psi|^2 (\overline{\Psi})^2 \partial_\x \Psi \partial_\x^2 \Psi - \frac{41}{4} |\Psi|^2 |\partial_\x \Psi|^2 \Psi \partial_\x \overline{\Psi} - \frac{131}{4} |\Psi|^2 |\partial_\x \Psi|^2 \overline{\Psi} \partial_\x \Psi - \frac{15}{2} (\overline{\Psi})^3 (\partial_\x \Psi)^3\\
\notag & + \frac{29}{16} |\Psi|^6 \Psi \partial_\x \overline{\Psi} + 4 |\Psi|^6 \overline{\Psi} \partial_\x \Psi,
\end{align*}
and
\begin{align*}
f_9(\Psi) = & - \frac{1}{2} \overline{\Psi} \partial_\x^8 \Psi + \frac{1}{4} |\Psi|^2 \Psi \partial_\x^6 \overline{\Psi} + \frac{7}{2} |\Psi|^2 \overline{\Psi} \partial_\x^6 \Psi + \frac{7}{2} |\Psi|^2 \partial_\x \Psi \partial_\x^5 \overline{\Psi} + \frac{21}{2} |\Psi|^2 \partial_\x \overline{\Psi} \partial_\x^5 \Psi\\
& + \frac{27}{2} (\overline{\Psi})^2 \partial_\x \Psi \partial_\x^5 \Psi + \frac{21}{2} |\Psi|^2 \partial_\x^2 \Psi \partial_\x^4 \overline{\Psi} + \frac{41}{4} (\partial_\x \Psi)^2 \overline{\Psi} \partial_\x^4 \overline{\Psi} + \frac{35}{2} |\Psi|^2 \partial_\x^2 \overline{\Psi} \partial_\x^4 \Psi\\
& + \frac{55}{2} (\overline{\Psi})^2 \partial_\x^2 \Psi \partial_\x^4 \Psi + \frac{85}{2} |\partial_\x \Psi|^2 \overline{\Psi} \partial_\x^4 \Psi + \frac{35}{2} |\Psi|^2 |\partial_\x^3 \Psi|^2 + \frac{99}{2} \overline{\Psi} \partial_\x \Psi \partial_\x^2 \Psi \partial_\x^3 \overline{\Psi}\\
& + \frac{69}{4} (\overline{\Psi})^2 (\partial_\x^3 \Psi)^2 + \frac{125}{2} \overline{\Psi} \partial_\x \Psi \partial_\x^2 \overline{\Psi} \partial_\x^3 \Psi + \frac{155}{2} \overline{\Psi} \partial_\x \overline{\Psi} \partial_\x^2 \Psi \partial_\x^3 \Psi + \frac{181}{4} |\partial_\x^2 \Psi|^2 \overline{\Psi} \partial_\x^2 \Psi\\
& - \frac{3}{2} |\Psi|^4 \Psi \partial_\x^4 \overline{\Psi} - \frac{35}{4} |\Psi|^4 \overline{\Psi} \partial_\x^4 \Psi - \frac{15}{4} |\Psi|^2 (\Psi)^2 \partial_\x \overline{\Psi} \partial_\x^3 \overline{\Psi} - \frac{79}{4} |\Psi|^4 \partial_\x \Psi \partial_\x^3 \overline{\Psi}\\
& - 36 |\Psi|^4 \partial_\x \overline{\Psi} \partial_\x^3 \Psi - 49 |\Psi|^2 (\overline{\Psi})^2 \partial_\x \Psi \partial_\x^3 \Psi - \frac{5}{2} |\Psi|^2 (\Psi)^2 (\partial_\x^2 \overline{\Psi})^2 - \frac{149}{4} |\Psi|^4 |\partial_\x^2 \Psi|^2\\
& - \frac{165}{4} |\Psi|^2 |\partial_\x \Psi|^2 \Psi \partial_\x^2 \overline{\Psi} - 66 |\Psi|^2 \overline{\Psi} (\partial_\x \Psi)^2 \partial_\x^2 \overline{\Psi} - \frac{133}{4} |\Psi|^2 (\overline{\Psi})^2 (\partial_\x^2 \Psi)^2\\
& - 29 |\Psi|^2 \Psi (\partial_\x \overline{\Psi})^2 \partial_\x^2 \Psi - \frac{349}{2} |\Psi|^2 |\partial_\x \Psi|^2 \overline{\Psi} \partial_\x^2 \Psi - \frac{221}{4} (\overline{\Psi})^3 (\partial_\x \Psi)^2 \partial_\x^2 \Psi - \frac{213}{4} |\Psi|^2 |\partial_\x \Psi|^4\\
& - \frac{101}{2} |\partial_\x \Psi|^2 (\overline{\Psi})^2 (\partial_\x \Psi)^2 + \frac{47}{16} |\Psi|^6 \Psi \partial_\x^2 \overline{\Psi} + \frac{35}{4} |\Psi|^6 \overline{\Psi} \partial_\x^2 \Psi + \frac{71}{16} |\Psi|^4 (\Psi)^2 (\partial_\x \overline{\Psi})^2\\
& + \frac{117}{4} |\Psi|^6 |\partial_\x \Psi|^2 + \frac{175}{8} |\Psi|^4 (\overline{\Psi})^2 (\partial_\x \Psi)^2 - \frac{7}{16} |\Psi|^{10}.
\end{align*}

The second step is to provide explicit expressions of the invariants $I_n(\Psi)$ associated to each conservation law $f_n(\Psi)$ for an arbitrary function $\Psi$ in the appropriate $X^k(\R)$ space. This raises some serious difficulties since the integrands are not in general integrable when $\Psi$ belongs to $X^k(\R)$. For instance, according to definition \eqref{densities}, the invariants $I_1(\Psi)$, $I_2(\Psi)$ and $I_3(\Psi)$ should be given by
\begin{equation}
I_1(\Psi) = \frac{1}{2} \int_\R \big( 1 - |\Psi|^2 \big), I_2(\Psi) = - \frac{1}{2} \int_\R \overline{\Psi} \partial_\x \Psi, \ {\rm and} \ I_3(\Psi) = - \frac{1}{2} \int_\R \overline{\Psi} \partial_\x^2 \Psi + \frac{1}{4} \int_\R \big( |\Psi|^4 - 1 \big).
\end{equation}
For an arbitrary function $\Psi$ in $X^k(\R)$, none of the above integrands belong to $L^1(\R)$. Some quantities like $\overline{\Psi} \partial_\x^2 \Psi$ can be handled using integration by parts. This is not possible for $1 - |\Psi|^2$ or $|\Psi|^4 - 1$, which do not involve derivatives. Even the quantity $\overline{\Psi} \partial_\x\Psi$ cannot be immediately treated by integration by parts. In particular, the renormalization process as used in formula \eqref{densities} is not sufficient to give a sense to the invariants $I_n(\Psi)$ in the spaces $X^k(\R)$.

When $n = 2 m + 1$ is an odd number, a simple way to remove this difficulty is to introduce linear combinations of the conservation laws. More precisely, we consider the integral quantities formally defined by
\begin{align}
\label{renorE1}
E_1(\Psi) & = \int_{\R} \Big( f_3(\Psi) + f_1(\Psi) + \frac{1}{4} \Big),\\
\label{renorE2}
E_2(\Psi) & = - \int_{\R} \Big( f_5(\Psi) + 3 f_3(\Psi) + \frac{3}{2} f_1(\Psi) + \frac{1}{4} \Big),\\
\label{renorE3}
E_3(\Psi) & = \int_{\R} \Big( f_7(\Psi) + 5 f_5(\Psi) + \frac{15}{2} f_3(\Psi) + \frac{5}{2} f_1(\Psi) + \frac{5}{16} \Big),
\end{align}
and
\begin{equation}
\begin{split}
\label{renorE4}
E_4(\Psi) & = - \int_{\R} \Big( f_9(\Psi) + 7 f_7(\Psi) + \frac{35}{2} f_5(\Psi) + \frac{35}{2} f_3(\Psi) + \frac{35}{8} f_1(\Psi) + \frac{7}{16} \Big).
\end{split}
\end{equation}
Setting $\eta \equiv 1 - |\Psi|^2$ as usual, formal integrations by parts lead to the expressions
\begin{align}
\label{E1}
E_1(\Psi) \equiv & E(\Psi) = \frac{1}{2} \int_{\R} |\partial_\x \Psi|^2 + \frac{1}{4} \int_{\R} \eta^2,\\
\label{E2}
E_2(\Psi) \equiv & \frac{1}{2} \int_{\R} |\partial_\x^2 \Psi|^2 - \frac{3}{2} \int_{\R} \eta |\partial_\x \Psi|^2 + \frac{1}{4} \int_{\R} (\partial_\x \eta)^2 - \frac{1}{4} \int_\R \eta^3,\\
\label{E3}
E_3(\Psi) \equiv & \frac{1}{2} \int_{\R} |\partial_\x^3 \Psi|^2 + \frac{1}{4} \int_{\R} |\partial_\x^2 \eta|^2 + \frac{5}{4} \int_{\R} |\partial_\x \Psi|^4 + \frac{5}{2} \int_{\R} \partial_\x^2 \eta |\partial_\x \Psi|^2 - \frac{5}{2} \int_{\R} \eta |\partial_\x^2 \Psi|^2\\
\notag - & \frac{5}{4} \int_\R \eta (\partial_\x \eta)^2 + \frac{15}{4} \int_\R \eta^2 |\partial_\x \Psi|^2 + \frac{5}{16} \int_\R \eta^4,
\end{align}
and
\begin{equation}
\begin{split}
\label{E4}
E_4(\Psi) \equiv & \frac{1}{2} \int_{\R} |\partial_\x^4 \Psi|^2 + \frac{1}{4} \int_{\R} |\partial_\x^3 \eta|^2 - \frac{7}{4} \int_\R \eta (\partial_\x^2 \eta)^2 - \frac{7}{2} \int_\R \eta |\partial_\x^3 \Psi|^2 + \frac{35}{8} \int_{\R} \eta^2 (\partial_\x \eta)^2\\
+ & \frac{35}{4} \int_\R \eta^2 |\partial_\x^2 \Psi|^2 - \frac{35}{4} \int_{\R} (\partial_\x \eta)^2 |\partial_\x \Psi|^2 - \frac{7}{2} \int_\R |\partial_\x \Psi|^2 |\partial_\x^2 \Psi|^2 - 7 \int_\R \partial_\x^2 \eta \langle \partial_\x \Psi, \partial_\x^3 \Psi\rangle\\
- & 7 \int_\R |\partial_\x \Psi|^2 \langle \partial_\x \Psi, \partial_\x^3 \Psi \rangle - \frac{35}{2} \int_\R \eta \partial_\x^2 \eta |\partial_\x \Psi|^2 - \frac{35}{4} \int_\R \eta^3 |\partial_\x \Psi|^2 - \frac{35}{4} \int_\R \eta |\partial_\x \Psi|^4\\
- & \frac{7}{16} \int_\R \eta^5.
\end{split}
\end{equation}
These expressions involve only integrable integrands, and therefore provide a rigorous definition of the corresponding integrals. We will refer to $E_k(\Psi)$ as the $k^{\rm th}$-order energy.

When $n = 2 m$, with $m \geq 2$, the same strategy can be applied to define the $k^{\rm th}$-order momentum. We first introduce the formal linear combinations of even conservation laws
\begin{align}
\label{renorP2}
P_2(\Psi) & = i \int_{\R} \Big( f_4(\Psi) + \frac{3}{2} f_2(\Psi) \Big),\\
\label{renorP3}
P_3(\Psi) & = - i \int_{\R} \Big( f_6(\Psi) + 5 f_4(\Psi) + 5 f_2(\Psi) \Big),
\end{align}
and
\begin{equation}
\begin{split}
\label{renorP4}
P_4(\Psi) & = i \int_{\R} \Big( f_8(\Psi) + 7 f_6(\Psi) + \frac{35}{2} f_4(\Psi) + \frac{105}{8} f_2(\Psi) \Big).
\end{split}
\end{equation}
After some integrations by parts, these expressions are transformed into the well-defined quantities
\begin{align}
\label{P2}
P_2(\Psi) & \equiv \frac{1}{2} \int_{\R} \langle i \partial_\x^2 \Psi, \partial_\x \Psi \rangle - \frac{3}{4} \int_{\R} \eta \langle i \partial_\x \Psi, \Psi \rangle,\\
\label{P3}
P_3(\Psi) & \equiv \frac{1}{2} \int_{\R} \langle i \partial_\x^3 \Psi, \partial_\x^2 \Psi \rangle - \frac{5}{2} \int_{\R} \eta \langle i \partial_\x^2 \Psi, \partial_\x \Psi \rangle + \frac{5}{4} \int_{\R} (\eta^2 + \eta) \langle i \partial_\x \Psi, \Psi \rangle,
\end{align}
and
\begin{equation}
\begin{split}
\label{P4}
P_4(\Psi) \equiv & \frac{1}{2} \int_{\R} \langle i \partial_\x^4 \Psi, \partial_\x^3 \Psi \rangle - \frac{7}{2} \int_{\R} \eta \langle i \partial_\x^3 \Psi, \partial_\x^2 \Psi \rangle + \frac{7}{2} \int_{\R} \partial_\x^2 \eta \langle i \partial_\x^2 \Psi, \partial_\x \Psi \rangle + \frac{7}{4} \int_{\R} |\partial_\x \Psi|^2 \langle i \partial_\x^2 \Psi, \partial_\x \Psi \rangle\\
+ & \frac{35}{4} \int_{\R} \eta^2 \langle i \partial_\x^2 \Psi, \partial_\x \Psi \rangle - \frac{35}{16} \int_{\R} (\eta^3 + \eta^2 + \eta) \langle i \partial_\x \Psi, \Psi \rangle.
\end{split}
\end{equation}
 
The case $n = 2$ has to be discussed separately. The invariant $I_2(\Psi)$ is formally equal, up to some integration by parts, to
$$I_2(\Psi) = \frac{1}{4} \int_\R \Big( \Psi \partial_\x \overline{\Psi} - \overline{\Psi} \partial_\x \Psi \Big).$$
This quantity is purely imaginary. Its imaginary part is equal to the momentum, i.e.
\begin{equation}
\label{P1}
\Im(I_2(\Psi)) = P_1(\Psi) \equiv P(\Psi) = \frac{1}{2} \int_{\R} \langle i \partial_\x \Psi, \Psi \rangle.
\end{equation}
However, the definition of the momentum raises some difficulty. As a matter of fact, the quantity $P(\Psi)$ is not well-defined for any arbitrary map $\Psi$ in the energy space $X^1(\R)$. We refer to \cite{BeGrSaS1} for a proof of this claim, and a discussion about the different ways to provide a rigorous definition of the momentum in the energy space. Notice that in our analysis of the transonic limit, we handle with maps $\Psi$ with small energy. In particular, we may assume that they satisfy
\begin{equation}
\label{toutpetit}
E(\Psi) < \frac{2 \sqrt{2}}{3},
\end{equation}
so that we may lift $\Psi$ as
\begin{equation}
\label{lift}
\Psi = \varrho \exp i \varphi.
\end{equation}
Then, we may define a so-called renormalized momentum by
\begin{equation}
\label{p1}
p_1(\Psi) = p(\Psi) \equiv \frac{1}{2} \int_{\R} \eta \partial_\x \varphi
\end{equation}
(see \cite{BetGrSa2,BeGrSaS1} for more details), which is also, at least formally, an invariant for the Gross-Pitaevskii equation, since it verifies
\begin{equation}
\label{renorp1}
p_1(\Psi) = - i \int_{\R} f_2(\Psi),
\end{equation}
when $\Psi$ is sufficiently smooth and integrable at infinity.

We will also consider the renormalized momenta $p_k$, which are linear combinations of $P_k$ and $p_1$. They are defined by
\begin{align}
\label{p2}
p_2(\Psi) \equiv P_2(\Psi) - \frac{3}{2} p_1(\Psi) = & \frac{1}{2} \int_{\R} \langle i \partial_\x^2 \Psi, \partial_\x \Psi \rangle - \frac{3}{4} \int_{\R} \eta \langle i \partial_\x \Psi, \Psi \rangle - \frac{3}{4} \int_{\R} \eta \partial_\x \varphi,\\
\label{p3}
p_3(\Psi) \equiv P_3(\Psi) + \frac{5}{2} p_1(\Psi) = & \frac{1}{2} \int_{\R} \langle i \partial_\x^3 \Psi, \partial_\x^2 \Psi \rangle - \frac{5}{2} \int_{\R} (\eta - 1) \langle i \partial_\x^2 \Psi, \partial_\x \Psi \rangle\\
\notag & + \frac{5}{4} \int_{\R} (\eta^2 + \eta) \langle i \partial_\x \Psi, \Psi \rangle + \frac{5}{4} \int_{\R} \eta \partial_\x \varphi,
\end{align}
and
\begin{equation}
\begin{split}
\label{p4}
p_4(\Psi) \equiv P_4(\Psi) - \frac{35}{8} p_1(\Psi) = & \frac{1}{2} \int_{\R} \langle i \partial_\x^4 \Psi, \partial_\x^3 \Psi \rangle - \frac{7}{2} \int_{\R} \eta \langle i \partial_\x^3 \Psi, \partial_\x^2 \Psi \rangle + \frac{35}{4} \int_{\R} \eta^2 \langle i \partial_\x^2 \Psi, \partial_\x \Psi \rangle\\
& - \frac{35}{16} \int_{\R} (\eta^3 + \eta^2 + \eta) \langle i \partial_\x \Psi, \Psi \rangle - \frac{35}{16} \int_{\R} \eta \partial_\x \varphi,
\end{split}
\end{equation}
provided that the function $\Psi$ satisfies condition \eqref{toutpetit}. As a matter of fact, the renormalized momenta $p_k$, more than the momenta $P_k$, will be involved in the analysis of the transonic limit. 

We may summarize some of our previous discussion in 

\begin{lemma}
\label{DefEkPk}
The functionals $E_k$, for $1 \leq k \leq 4$, and $P_k$, for $2\leq k \leq 4$, are well-defined and continuous on $X^k(\R)$. The functionals $p_k(\Psi)$ are well-defined for any function $\Psi \in X^k(\R)$ which satisfies \eqref{toutpetit}.
\end{lemma}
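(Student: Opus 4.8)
The functionals are displayed, after integration by parts, in the explicit forms \eqref{E1}--\eqref{E4}, \eqref{P2}--\eqref{P4} and \eqref{p1}--\eqref{p4}. Each is a finite linear combination of integrals whose integrands are products of the factors $\eta$, $\partial_\x^j \Psi$, $\partial_\x^j \overline{\Psi}$ and $\partial_\x^j \eta$ (with $1 \leq j \leq k$), together with, in the case of the renormalized momenta $p_k$, the factor $\partial_\x \varphi$. The plan is to show, term by term, that every such integrand lies in $L^1(\R)$, which yields that the functionals are well-defined, and then to read the same bounds multilinearly so as to obtain continuity on $X^k(\R)$.

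The first step is to record the regularity of the available factors for $\Psi \in X^k(\R)$. By \eqref{holtz}, $\Psi \in L^\infty(\R)$; since $\partial_\x \Psi \in H^{k-1}(\R)$, we have $\partial_\x^j \Psi \in L^2(\R)$ for $1 \leq j \leq k$ and, by the embedding $H^{k-j}(\R) \hookrightarrow L^\infty(\R)$, also $\partial_\x^j \Psi \in L^\infty(\R)$ for $1 \leq j \leq k-1$. The crucial additional ingredient concerns $\eta = 1 - |\Psi|^2$: it belongs to $L^2(\R) \cap L^\infty(\R)$ by finite energy and the $L^\infty$-bound on $\Psi$, while its derivative $\partial_\x \eta = -2\,\Re(\overline{\Psi}\,\partial_\x \Psi)$ belongs to $H^{k-1}(\R)$. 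Indeed, expanding $\partial_\x^m \eta$ by the Leibniz rule for $1 \leq m \leq k$, the top-order term $\overline{\Psi}\,\partial_\x^m \Psi$ lies in $L^2(\R)$ because $\Psi \in L^\infty$ and $\partial_\x^m \Psi \in L^2$, whereas each mixed term $\partial_\x^i \overline{\Psi}\,\partial_\x^{m-i} \Psi$ with $1 \leq i \leq m-1 \leq k-1$ has one factor in $L^\infty$ and the other in $L^2$; hence $\partial_\x^m \eta \in L^2(\R)$, and in particular $\partial_\x^j \eta \in L^\infty(\R)$ for $1 \leq j \leq k-1$. This is morally the content of the tame estimate of Lemma \ref{Tamise}.

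With these facts in hand, the $L^1$-integrability of each integrand follows from H\"older's inequality, placing every factor but one into $L^\infty(\R)$ and the one or two remaining factors into $L^2(\R)$. For instance, in \eqref{E4}, the term $\eta (\partial_\x^2 \eta)^2$ is controlled by $\| \eta \|_{L^\infty} \| \partial_\x^2 \eta \|_{L^2}^2$, the mixed term $\partial_\x^2 \eta\, \langle \partial_\x \Psi, \partial_\x^3 \Psi \rangle$ by $\| \partial_\x^2 \eta \|_{L^2} \| \partial_\x \Psi \|_{L^\infty} \| \partial_\x^3 \Psi \|_{L^2}$, and $\eta^5$ by $\| \eta \|_{L^\infty}^3 \| \eta \|_{L^2}^2$; all remaining terms of \eqref{E1}--\eqref{E4} and \eqref{P2}--\eqref{P4} are treated in the same way. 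Read as multilinear forms, these estimates show that each functional is bounded, and in fact locally Lipschitz, on $X^k(\R)$: writing a difference $E_k(\Psi) - E_k(\tilde{\Psi})$ as a telescoping sum in which a single factor is replaced by its difference, each resulting term is bounded by the distance between $\Psi$ and $\tilde{\Psi}$ in $X^k(\R)$ times a constant depending only on their $X^k$-norms, so that continuity follows. The principal source of tedium here, rather than of difficulty, is the number of terms in $E_3$, $E_4$, $P_3$ and $P_4$.

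It remains to treat the renormalized momenta $p_k$, which differ from the $P_k$ only through the factor $p_1(\Psi) = \frac{1}{2} \int_\R \eta\, \partial_\x \varphi$ arising in \eqref{p1}. Here the genuinely delicate point is that the phase $\varphi$ must first be defined. The assumption \eqref{toutpetit}, $E(\Psi) < \frac{2 \sqrt{2}}{3}$, is precisely the threshold---the energy of the stationary black soliton $\tanh(\x/\sqrt{2})$---below which a finite-energy map cannot vanish; combined with $|\Psi| \to 1$ at infinity, this provides a uniform lower bound $\inf_\R |\Psi| > 0$ and hence a global lift \eqref{lift}, $\Psi = \varrho \exp i \varphi$. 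With $|\Psi|$ bounded away from $0$, the identity $\partial_\x \varphi = |\Psi|^{-2}\, \Im(\overline{\Psi}\, \partial_\x \Psi)$ together with $\Psi \in L^\infty(\R)$ and $\partial_\x \Psi \in L^2(\R)$ gives $\partial_\x \varphi \in L^2(\R)$, so that $\eta\, \partial_\x \varphi \in L^1(\R)$ by Cauchy--Schwarz and $p_1$---hence $p_2, p_3, p_4$---is well-defined. This lifting argument, and the attendant control of $\partial_\x \varphi$, is the main obstacle; the remaining verifications proceed exactly as above.
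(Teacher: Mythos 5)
Your proof is correct and follows essentially the same route as the paper's: regularity of the factors ($\Psi \in L^\infty$, $\partial_\x^j \Psi$ and $\partial_\x^j \eta$ in $L^2$ or $L^\infty$ via Sobolev embedding) combined with H\"older's inequality on the explicit integrands \eqref{E1}--\eqref{E4}, \eqref{P2}--\eqref{P4} for well-definedness and continuity, and the lifting $\Psi = \varrho \exp i \varphi$ under \eqref{toutpetit} together with $\varrho |\partial_\x \varphi| \leq |\partial_\x \Psi|$ for the $p_k$. The only differences are ones of detail: the uniform lower bound $\inf_{\R} |\Psi| > 0$, which you justify by the black-soliton energy threshold, is precisely the fact the paper imports from \cite{BetGrSa2}, and your term-by-term H\"older and telescoping verifications are what the paper compresses into the observation that $\eta \in H^k(\R)$ and that ``continuity raises no difficulty.''
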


\begin{proof}
The proof follows from the definition of the space $X^1(\R)$ for the functional $E_1 = E$. For the momentum $p_1 = p$, it is proved in \cite{BetGrSa2} that any function $\Psi \in X^1(\R)$ such that \eqref{toutpetit} holds, verifies
$$\rho_{\min} = \underset{x \in \R}{\inf} |\Psi(x)| > 0,$$
so that, denoting $\Psi = \varrho \exp i \varphi$ as above,
$$|\eta \partial_\x \varphi| \leq \frac{1}{\rho_{\min}} \big| \eta \big| \big| \varrho \partial_\x \varphi \big| \leq \frac{1}{\rho_{\min}} \big| \eta \big| \big| \partial_\x \Psi \big|.$$
Hence, the quantity $\eta \partial_\x \varphi$ belongs to $L^1(\R)$, so that $p(\Psi)$ is well-defined as well. Finally, for the higher order invariants, notice that, by the Sobolev embedding theorem, any function $\Psi \in X^k(\R)$ belongs to $\boC^{k-1}_0(\R)$, so that, in particular, $\eta$ is in $H^k(\R)$. Continuity raises no difficulty.
\end{proof}

\subsection{Conservation of the invariants in the spaces $X^k(\R)$}
\label{Definv}

The purpose of this section is to provide a rigorous mathematical proof to the fact that the invariants are conserved along the Gross-Pitaevskii flow. As mentioned in the introduction, conservation of the energy $E_1 = E$ was already addressed in \cite{Zhidkov1} (see also \cite{Gerard2}).

\begin{theorem}[\cite{Zhidkov1,Gerard2}]
\label{ConsE}
Let $\Psi_0 \in X^1(\R)$. Then, the unique solution $\Psi(\cdot, t)$ to \eqref{GP} in $\boC^0(\R, X^1(\R))$ with initial data $\Psi_0$ given by Theorem \ref{thm:existe} satisfies
$$E \big( \Psi(\cdot, t) \big) = E(\Psi_0),$$
for any $t \in \R$.
\end{theorem}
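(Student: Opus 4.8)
The plan is to first establish the conservation of $E$ for sufficiently regular solutions, where every formal manipulation can be carried out, and then to recover the general case $\Psi_0 \in X^1(\R)$ by an approximation argument relying on the continuity of both the flow map and the functional $E$.

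First I would treat the case $\Psi_0 \in X^3(\R)$. By Theorem \ref{thm:existe} (with $k = 1$), the corresponding solution satisfies $\Psi \in \boC^1(\R, X^1(\R)) \cap \boC^0(\R, X^3(\R))$, so that $\partial_t \Psi \in \boC^0(\R, X^1(\R))$, and in particular $\partial_\x \partial_t \Psi \in L^2(\R)$. Moreover, writing $\eta = 1 - |\Psi|^2 \in H^3(\R)$ and using \eqref{GP} in the form $\partial_t \Psi = i \partial_\x^2 \Psi + i \Psi \eta$, one sees that $\partial_\x^2 \Psi \in H^1(\R)$ and $\Psi \eta \in L^2(\R)$, so that $\partial_t \Psi \in L^2(\R)$ and $\partial_t \Psi(\x) \to 0$ as $|\x| \to + \infty$. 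This regularity justifies differentiating $E(\Psi(\cdot, t))$ under the integral sign, which gives
$$\frac{d}{dt} E(\Psi) = \Re \int_\R \overline{\partial_\x \partial_t \Psi} \, \partial_\x \Psi - \int_\R \eta \, \Re \big( \overline{\Psi} \partial_t \Psi \big).$$

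The key step is then an integration by parts in the first integral, whose boundary terms vanish since $\partial_\x \Psi \in H^2(\R)$ and $\partial_t \Psi$ both tend to $0$ at infinity, yielding
$$\frac{d}{dt} E(\Psi) = - \Re \int_\R \partial_t \Psi \, \Big( \overline{\partial_\x^2 \Psi} + \eta \, \overline{\Psi} \Big).$$
Substituting the conjugate of \eqref{GP}, namely $\overline{\partial_\x^2 \Psi} = - \eta \overline{\Psi} + i \overline{\partial_t \Psi}$, the bracket reduces exactly to $i \overline{\partial_t \Psi}$, so that $\frac{d}{dt} E(\Psi) = - \Re \big( i \int_\R |\partial_t \Psi|^2 \big) = 0$, the integral $\int_\R |\partial_t \Psi|^2$ being real. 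Hence $E$ is conserved along the flow whenever $\Psi_0 \in X^3(\R)$.

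Finally, for an arbitrary $\Psi_0 \in X^1(\R)$, I would select a sequence $\Psi_0^n \in X^3(\R)$ with $\Psi_0^n \to \Psi_0$ in $X^1(\R)$, obtained for instance by mollification. For each fixed $t$, the continuity of the flow map on $X^1(\R)$ granted by Theorem \ref{thm:existe} gives $\Psi^n(\cdot, t) \to \Psi(\cdot, t)$ in $X^1(\R)$, while the continuity of $E$ on $X^1(\R)$ (Lemma \ref{DefEkPk}) gives both $E(\Psi^n(\cdot, t)) \to E(\Psi(\cdot, t))$ and $E(\Psi_0^n) \to E(\Psi_0)$. Passing to the limit in the identity $E(\Psi^n(\cdot, t)) = E(\Psi_0^n)$ established above then yields $E(\Psi(\cdot, t)) = E(\Psi_0)$. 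I expect the main obstacle to be precisely this approximation step: one must ensure that the regularization produces genuine $X^3(\R)$ data converging in the natural topology of $X^1(\R)$ while preserving the finite-energy structure and the boundary condition $|\Psi| \to 1$ at infinity. The differential identity itself is a short and robust cancellation once the regularity is in place.
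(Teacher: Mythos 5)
Your proposal is correct, but note first that the paper does not actually prove Theorem \ref{ConsE}: it is quoted from \cite{Zhidkov1} and \cite{Gerard2}, which is why the statement carries a citation. The closest internal analogue is the paper's proof of Theorem \ref{Invconserved} for the higher-order invariants, and your argument shares its two-step skeleton exactly: prove conservation for regularized data, then pass to general $X^1(\R)$ data using the continuity of the flow map (Theorem \ref{thm:existe}) and the continuity of the functional (Lemma \ref{DefEkPk}); the density of smoother data in the $X^k$-distance, which you single out as the main obstacle, is precisely the fact the paper invokes with a reference to \cite{Gerard1}, so that step is standard. Where you genuinely diverge is the smooth-data step: you differentiate $E$ directly and use the cancellation $\frac{d}{dt} E(\Psi) = - \Re \big( i \int_\R |\partial_t \Psi|^2 \big) = 0$, which is legitimate here because the energy density is integrable on $X^1(\R)$ and the boundary term $\overline{\partial_t \Psi}\, \partial_\x \Psi$ vanishes at infinity under the $X^3$ regularity you assume. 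The paper's machinery for Theorem \ref{Invconserved} is heavier --- conservation-law densities $f_n$ of \cite{ShabZak2} via Lemma \ref{Consfn}, cut-off functions $\chi_R$, and a limit $R \to + \infty$ --- but that weight is forced by the higher invariants, whose natural integrands are not integrable and must be renormalized by linear combinations and integrations by parts; no such difficulty arises for $E_1 = E$, so your more elementary route is the appropriate one for this statement. One small abuse: writing $\partial_t \Psi \in \boC^0(\R, X^1(\R))$ is not quite meaningful, since the time derivative lives in the underlying Zhidkov-type Banach space rather than in $X^1(\R)$ (it tends to $0$ at infinity, not to modulus $1$); what your argument actually needs, namely $\partial_t \Psi = i \partial_\x^2 \Psi + i \Psi \eta \in \boC^0(\R, H^1(\R))$, is correct and follows from $\Psi \in \boC^0(\R, X^3(\R))$ exactly as you derive it from the equation.
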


Concerning the momentum, Gallo \cite{Gallo3} established the conservation of the renormalized momentum $p_1$ (see also \cite{BeGrSaS1}).

\begin{theorem}[\cite{Gallo3, BeGrSaS1}]
\label{Defandconsp}
Let $\Psi_0$ be a function in $X^1(\R)$ which satisfies \eqref{toutpetit}. If $\Psi(\cdot, t)$ stands for the unique solution to \eqref{GP} in $\boC^0(\R, X^1(\R))$ with initial data $\Psi_0$ given by Theorem \ref{thm:existe} , then
$$p \big( \Psi(\cdot, t) \big) = p(\Psi_0),$$
for any $t \in \R$.
\end{theorem}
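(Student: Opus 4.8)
The plan is to prove conservation first for a dense class of smooth, rapidly decaying data, and then to extend it to a general $\Psi_0 \in X^1(\R)$ satisfying \eqref{toutpetit} by a density and continuity argument. The starting observation is that, since the energy is conserved along the flow by Theorem \ref{ConsE}, the smallness condition \eqref{toutpetit} is preserved for all times. Hence the lifting $\Psi(\cdot,t) = \varrho(\cdot,t) \exp i \varphi(\cdot,t)$ remains valid for every $t$, with $\inf_\R \varrho(\cdot,t)$ bounded from below uniformly in $t$ (by the result of \cite{BetGrSa2} already invoked in the proof of Lemma \ref{DefEkPk}), so that $p(\Psi(\cdot,t)) = \frac12 \int_\R \eta\, \partial_\x \varphi$ is well-defined throughout the evolution.

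For smooth and sufficiently decaying data, I would compute $\frac{d}{dt} p(\Psi(\cdot,t))$ directly in the hydrodynamic variables. Separating the imaginary and real parts of \eqref{GP} in terms of $\varrho$ and $\varphi$ yields the continuity equation $\partial_t \eta = 2 \partial_\x\big( (1 - \eta) \partial_\x \varphi \big)$ and the phase equation $\partial_t \varphi = \frac{\partial_\x^2 \varrho}{\varrho} - (\partial_\x \varphi)^2 + \eta$. Differentiating $p$, substituting these two identities, and integrating by parts, the integrand $\partial_t (\eta \partial_\x \varphi)$ reduces to a pure spatial derivative; this is exactly the conservation-law structure $\partial_t f_2(\Psi) + \partial_\x G_2(\Psi) = 0$ underlying the identity \eqref{renorp1}. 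Since $\varrho \to 1$, $\eta \to 0$, and all derivatives of $\Psi$ decay at infinity for such data, the associated flux vanishes at $\pm \infty$ and one obtains $\frac{d}{dt} p(\Psi(\cdot,t)) = 0$.

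Finally I would pass to the limit. Approximating $\Psi_0$ by smooth, well-localized data $\Psi_0^n$ with $\Psi_0^n \to \Psi_0$ in $X^1(\R)$ and $E(\Psi_0^n) < \frac{2\sqrt2}{3}$, the persistence of higher regularity in Theorem \ref{thm:existe} guarantees that each approximate solution $\Psi^n(\cdot,t)$ stays smooth and decaying, so that $p(\Psi^n(\cdot,t)) = p(\Psi_0^n)$ by the previous step. The continuity of the flow map (Theorem \ref{thm:existe}) gives $\Psi^n(\cdot,t) \to \Psi(\cdot,t)$ in $X^1(\R)$, and since all these functions satisfy \eqref{toutpetit} with a uniform lower bound on $\varrho$, the pointwise bound $|\eta \partial_\x \varphi| \leq \rho_{\min}^{-1} |\eta| |\partial_\x \Psi|$ from Lemma \ref{DefEkPk} upgrades this into the convergence $p(\Psi^n(\cdot,t)) \to p(\Psi(\cdot,t))$; passing to the limit in $p(\Psi^n(\cdot,t)) = p(\Psi_0^n)$ then concludes. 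The main obstacle is twofold: first, establishing the continuity of the renormalized momentum $p$ on the open set $\{E < \frac{2\sqrt2}{3}\}$, which is delicate since $p$ is genuinely not continuous on all of $X^1(\R)$ and requires controlling the phase through the uniform lower bound on $\varrho$; and second, arranging the approximating sequence so as to keep the energy below the threshold while retaining enough decay to legitimately discard every boundary term in the direct computation.
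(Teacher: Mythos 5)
First, a point of reference: the paper itself does not prove Theorem \ref{Defandconsp} --- it is quoted from \cite{Gallo3,BeGrSaS1} --- so the natural internal benchmark is the proof of Theorem \ref{Invconserved}, which treats $E_k$ and $P_k$ by exactly the two-step scheme you propose: establish conservation for more regular data via the conservation-law structure (Lemma \ref{Consfn}), then conclude by density in $X^k$ and continuity of the flow map and of the functionals. Your opening observation is correct and essential: conservation of the energy (Theorem \ref{ConsE}) propagates the smallness condition \eqref{toutpetit} in time, so the lifting $\Psi = \varrho \exp i \varphi$ and the uniform lower bound on $\varrho$ from \cite{BetGrSa2} persist, and $p(\Psi(\cdot,t))$ is well-defined for all $t$. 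Your hydrodynamic computation is likewise a correct, equivalent rendering of the identity $\partial_t f_2(\Psi) = i\,\partial_\x \big( f_3(\Psi) - \partial_\x \overline{\Psi}\, \boF_2(\Psi) \big)$ that underlies \eqref{renorp1}.

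There are, however, two soft spots, one of which is a genuine gap. The gap is the sentence asserting that ``the persistence of higher regularity in Theorem \ref{thm:existe} guarantees that each approximate solution $\Psi^n(\cdot,t)$ stays smooth \emph{and decaying}''. Theorem \ref{thm:existe} propagates $X^k$ regularity, not spatial localization: Schr\"odinger flows have infinite speed of propagation, and rapid decay of the datum is not inherited by the solution, so the boundary terms at time $t$ cannot be discarded on the grounds you give. What is actually available is weaker but sufficient: for data in $X^k$ with $k$ large, $\eta(\cdot,t)$ and $\partial_\x \Psi(\cdot,t)$ lie in high-order Sobolev spaces at every time, hence tend to zero at infinity; but then integrating a total derivative over $\R$ whose primitive merely vanishes at infinity requires justification. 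This is precisely why the paper's proof of Theorem \ref{Invconserved} runs the computation against cut-offs $\chi_R$ and uses that $\int_\R \partial_\x \chi_R\, f \to 0$ as $R \to +\infty$ whenever $f \to 0$ at infinity; substituting that cut-off argument for your ``rapid decay plus vanishing boundary terms'' step repairs the proof without needing any decay propagation. The second, smaller point is one you flag yourself: the continuity of $p$ on the open set $\{ E < \frac{2\sqrt{2}}{3} \}$ for the $X^1$-distance must actually be proved before you can pass to the limit in $p(\Psi^n(\cdot,t)) = p(\Psi_0^n)$. Your proposed mechanism is the right one: writing $\eta\, \partial_\x \varphi = - \frac{\eta}{1 - \eta} \langle i \partial_\x \Psi, \Psi \rangle$ and using the uniform lower bound on $1 - \eta = \varrho^2$ together with the $L^2$-convergence of $\eta_n$ and $\partial_\x \Psi_n$ makes this routine, but it is a lemma, not a remark.
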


Here, we extend the analysis to the integral quantities $P_k(\Psi)$ and $E_k(\Psi)$.

\begin{theorem}
\label{Invconserved}
Let $2 \leq k \leq 4$ and $\Psi_0 \in X^k(\R)$. Then, the unique solution $\Psi(\cdot, t)$ in the space $\boC^0(\R, X^k(\R))$ to \eqref{GP} with initial data $\Psi_0$ given by Theorem \ref{thm:existe} satisfies
\begin{equation}
\label{buffet}
P_k \big( \Psi(\cdot, t) \big) = P_k(\Psi_0), \ {\rm and} \ E_k \big( \Psi(\cdot, t) \big) = E_k(\Psi_0),
\end{equation}
for any $t \in \R$.
\end{theorem}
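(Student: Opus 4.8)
The plan is to prove the conservation laws by differentiating each functional $E_k$ and $P_k$ in time along the \eqref{GP} flow and showing that the derivative vanishes. The crucial subtlety, which distinguishes this from a purely formal computation, is that the functionals are defined on the spaces $X^k(\R)$ where the integrands are only rendered integrable after the renormalizations and integrations by parts displayed in \eqref{E1}--\eqref{P4}. A direct time-differentiation of these renormalized expressions would force us to justify that $\frac{d}{dt}$ commutes with the integral and that all boundary terms arising from integration by parts genuinely vanish, which is delicate because the individual summands (e.g. $\int \eta^2(\partial_\x\eta)^2$) are integrable but their time-derivatives produce terms whose integrability must be checked. I would therefore proceed by an approximation-and-density argument rather than a bare-hands computation on $X^k(\R)$.

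\textbf{Step 1: reduce to smooth, well-localized data.} First I would work with initial data $\Psi_0 \in X^{k+2}(\R)$ (and satisfying \eqref{toutpetit} for the momentum statements), for which Theorem \ref{thm:existe} guarantees that $t \mapsto \Psi(\cdot, t)$ is $\boC^1(\R, X^k(\R)) \cap \boC^0(\R, X^{k+2}(\R))$. The extra two derivatives of regularity are exactly what is needed to legitimately differentiate the integral expressions \eqref{E2}--\eqref{P4} under the integral sign and to integrate by parts without leftover boundary contributions, since $\Psi - 1$ and all the relevant derivatives decay at infinity. Once conservation is established on the dense subspace $X^{k+2}(\R)$, I would extend it to all of $X^k(\R)$ using the continuity of the functionals on $X^k(\R)$ (Lemma \ref{DefEkPk}) together with the continuity of the flow map $\Psi_0 \mapsto \Psi(\cdot, T)$ on $X^k(\R)$ asserted in Theorem \ref{thm:existe}: approximating $\Psi_0 \in X^k(\R)$ by $\Psi_0^{(n)} \in X^{k+2}(\R)$ in the $X^k$-norm, passing to the limit in $E_k(\Psi^{(n)}(\cdot, t)) = E_k(\Psi_0^{(n)})$ yields the identity for $\Psi_0$.

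\textbf{Step 2: the time-derivative computation.} On smooth data, the heart of the matter is to show $\frac{d}{dt} E_k(\Psi(\cdot,t)) = 0$ and $\frac{d}{dt} P_k(\Psi(\cdot,t)) = 0$. Rather than differentiating the renormalized polynomial expressions directly, the cleaner route is to exploit the origin of these functionals as linear combinations of the Shabat--Zakharov densities $f_n$. Each $f_n(\Psi)$ satisfies a local conservation law $\partial_t f_n = \partial_\x G_n$ for an appropriate flux $G_n$ (this is the content of the inverse-scattering formalism of \cite{ShabZak2}); the renormalizing constants added in \eqref{renorE1}--\eqref{renorP4} do not affect $\partial_t f_n$ since they are constants, and they are chosen precisely so that the renormalized density is integrable. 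Integrating the local law $\partial_t(\text{density}) = \partial_\x(\text{flux})$ over $\R$ and checking that the flux $G_n$ tends to its (time-independent) limits as $|\x| \to +\infty$ gives the vanishing of the derivative. Concretely, I would substitute $i\partial_t\Psi = -\partial_\x^2\Psi + \Psi(|\Psi|^2 - 1)$ into $\partial_t$ of each integrand, perform the integrations by parts needed to recognize a perfect $\x$-derivative, and verify the boundary terms vanish using the decay afforded by $X^{k+2}(\R)$.

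\textbf{Main obstacle.} The principal difficulty is bookkeeping: for $k=4$ the density $f_9$ and the functional $E_4$ contain a large number of high-degree polynomial terms in $\Psi, \overline\Psi$ and their derivatives up to order four, and organizing the time-derivative into an exact total derivative $\partial_\x G_9$ requires many cancellations among these terms. The risk is not conceptual but combinatorial. I expect the cleanest way to manage this is to verify the local conservation law $\partial_t f_n = \partial_\x G_n$ at the level of the densities $f_n$ themselves—where the recursion \eqref{recurfn}, equivalently \eqref{recurFn}, provides structural identities—and only afterward integrate the renormalized combinations, so that the algebra is done once at the density level rather than separately for each renormalized functional. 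The integrability of every term appearing in $G_n$ near infinity, and hence the vanishing of boundary contributions, must be checked termwise, but this follows from the $\boC^{k+1}_0$ decay of $\Psi - 1$ on $X^{k+2}(\R)$.
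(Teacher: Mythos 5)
Your overall strategy --- establish a local conservation law $\partial_t f_n = \partial_\x G_n$ at the level of the Shabat--Zakharov densities via the recursion \eqref{recurfn}, integrate it for regularized data while controlling boundary contributions, then extend to all of $X^k(\R)$ by density and continuity of the flow map --- is exactly the paper's (Lemmas \ref{ConsFn} and \ref{Consfn}, a cut-off argument, and a final approximation step). However, there is a concrete flaw that makes your Step 2 fail on the data produced by your Step 1: regularizing to $X^{k+2}(\R)$ is not enough. The functional $E_k$ is built from the densities $f_n$ with $n$ up to $2k+1$ (for instance $E_4$ involves $f_9$ via \eqref{renorE4}), and $f_{2k+1}$ contains $\partial_\x^{2k}\Psi$, while the associated flux, which by \eqref{lehmann} involves $f_{2k+2}$, contains $\partial_\x^{2k+1}\Psi$. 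For $k=4$ the density-level computation thus requires roughly ten classical spatial derivatives (the paper's Lemma \ref{Consfn} with $n=9$ demands $\boC^0((a,b),\boC^{10}(\R)) \cap \boC^1((a,b),\boC^{8}(\R))$), whereas $X^{6}(\R)$ provides only six derivatives in the $L^2$ sense, about five classically. Your stated justification for $X^{k+2}$ (enough to differentiate the renormalized expressions \eqref{E2}--\eqref{P4} under the integral) pertains to the route you then explicitly decline to take. The remedy is costless --- regularize to a much smoother space, as the paper does with $X^9(\R)$, since the closing density argument is insensitive to how smooth the approximating data are --- but as written the two steps of your proposal are incompatible.

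A secondary inaccuracy worth flagging: you assert that the additive constants in \eqref{renorE1}--\eqref{renorP4} "are chosen precisely so that the renormalized density is integrable." This is not true, and it obscures where the real work lies. Even after adding the constants, terms like $\overline{\Psi}\,\partial_\x^{2}\Psi$, and more generally $\overline{\Psi}\,\partial_\x^{2k}\Psi$, are not in $L^1(\R)$ for $\Psi \in X^k(\R)$, no matter how smooth $\Psi$ is; the functionals $E_k$, $P_k$ are only defined through the integrated-by-parts expressions \eqref{E1}--\eqref{P4}. Consequently "integrating the local law over $\R$" is not directly licit: one must first relate $\frac{d}{dt}$ of the genuinely integrable density $e_k$ to the formal combination $\sum_n c_n \partial_t f_n$, and both this comparison and the subsequent substitution of $\partial_t f_n = \partial_\x(\cdots)$ generate exact $\x$-derivatives of quantities that merely tend to zero at infinity without being integrable. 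The paper resolves this by multiplying by a cut-off $\chi_R$, collecting every such error as $\int_\R \partial_\x \chi_R \, Q$ with $Q \to 0$ at $\pm\infty$, and letting $R \to +\infty$. Your proposal gestures at checking boundary terms "termwise," which can indeed be made rigorous, but the carrying mechanism is this cut-off comparison, not any integrability of the renormalized density itself.
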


\begin{remark}
Theorem \ref{Invconserved} focuses on the conservation of integral quantities which play a role in the analysis of the transonic limit. As mentioned in the introduction, the mass $m(\Psi)$ defined by \eqref{alamasse} is also formally conserved. However, the quantity $m(\Psi)$ is not well-defined in the energy space $X^1(\R)$. A proof of its conservation along the Gross-Pitaevskii flow would first require to provide a precise mathematical meaning to this quantity in $X^1(\R)$.\\
Similarly, Theorem \ref{Invconserved} does not address the question of the existence and conservation of higher order energies and momenta. A more general treatment of the inductive form of the conservation laws $f_n$ would be required to define properly higher order energies and momenta. However, we believe that such integral quantities could be well-defined in the spaces $X^k(\R)$ taking linear combinations and integrating by parts as above, so that their conservation along the Gross-Pitaevskii flow would also follow from Lemma \ref{Consfn} below.
\end{remark}

At this stage, notice that, in view of Theorems \ref{Defandconsp} and \ref{Invconserved}, and definitions \eqref{p2}, \eqref{p3} and \eqref{p4}, the quantities $p_k$ are also conserved along the Gross-Pitaevskii flow.

\begin{cor}
\label{Conspk}
Let $2 \leq k \leq 4$, and let $\Psi_0$ be a function in $X^k(\R)$ such that assumption \eqref{toutpetit} holds. Then, we have 
$$p_k \big( \Psi(\cdot, t) \big) = p_k(\Psi_0),$$
for any $t \in \R$, where $\Psi$ denotes the unique solution to \eqref{GP} in $\boC^0(\R, X^k(\R))$ with initial data $\Psi_0$.
\end{cor}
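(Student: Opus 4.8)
The plan is to exploit the fact that each renormalized momentum $p_k$ is, by its very definition, a fixed linear combination of the momentum $P_k$ and the lowest renormalized momentum $p_1$, both of which are already known to be conserved. Concretely, definitions \eqref{p2}, \eqref{p3} and \eqref{p4} express
$$p_2 = P_2 - \tfrac{3}{2} p_1, \quad p_3 = P_3 + \tfrac{5}{2} p_1, \quad p_4 = P_4 - \tfrac{35}{8} p_1,$$
so that conservation of $p_k$ should follow immediately once conservation of $P_k$ and of $p_1$ are known separately.

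First, I would verify that all the quantities in sight remain well-defined along the flow. The renormalized momenta $p_1$ and $p_k$ require the lifting \eqref{lift}, which is only guaranteed under the smallness assumption \eqref{toutpetit}. The crucial observation is that, since $\Psi_0 \in X^k(\R) \subset X^1(\R)$ satisfies \eqref{toutpetit}, the conservation of energy (Theorem \ref{ConsE}) gives $E(\Psi(\cdot, t)) = E(\Psi_0) < \tfrac{2 \sqrt{2}}{3}$ for every $t \in \R$, so that \eqref{toutpetit} propagates in time and the lifting remains available at each instant. In particular, $p_1(\Psi(\cdot, t))$ and hence $p_k(\Psi(\cdot, t))$ are well-defined for all $t$.

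It then remains to invoke the two available conservation results and combine them linearly. By Theorem \ref{Defandconsp}, applied to the datum $\Psi_0$, we have $p_1(\Psi(\cdot, t)) = p_1(\Psi_0)$ for all $t \in \R$; and by Theorem \ref{Invconserved}, for each $2 \leq k \leq 4$ we have $P_k(\Psi(\cdot, t)) = P_k(\Psi_0)$. Substituting these two identities into the defining relation for $p_k$ yields at once $p_k(\Psi(\cdot, t)) = p_k(\Psi_0)$, which is the desired conclusion. There is really no substantial obstacle here: the statement is a bookkeeping consequence of results already established, the only mild subtlety being the propagation of the condition \eqref{toutpetit}, which is settled by the conservation of energy. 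No new estimate or integration by parts is required.
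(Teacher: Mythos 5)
Your proposal is correct and is exactly the argument the paper uses: the corollary is stated there as an immediate consequence of Theorems \ref{Defandconsp} and \ref{Invconserved} together with the defining relations \eqref{p2}, \eqref{p3} and \eqref{p4}. Your additional remark that condition \eqref{toutpetit} propagates in time via conservation of energy (Theorem \ref{ConsE}), so that $p_1$ and hence $p_k$ remain well-defined along the flow, is a detail the paper leaves implicit but is entirely consistent with its reasoning.
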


In the proof of Theorem \ref{Invconserved}, we will make use of the fact that the functionals $f_n$ are conservation laws for \eqref{GP}. More precisely, we have

\begin{lemma}
\label{Consfn}
Let $- \infty \leq a < b \leq + \infty$ and $n \geq 1$. Consider a solution $\Psi$ to \eqref{GP} such that
\begin{equation}
\label{aig}
\Psi \in \boC^0((a, b), \boC^{n+1}(\R)) \cap \boC^1((a, b), \boC^{n-1}(\R)).
\end{equation}
Then, the map $t \mapsto f_n(\Psi(\cdot, t))$ is in $\boC^0((a, b), \boC^1(\R)) \cap \boC^1((a, b), \boC^0(\R))$, while the function $t \mapsto f_{n+1}(\Psi(\cdot, t))$ belongs to $\boC^0((a, b), \boC^1(\R))$. Moreover, they satisfy
\begin{equation}
\label{lehmann}
\partial_t \Big( f_n(\Psi) \Big) = i \partial_\x \Big( f_{n + 1}(\Psi) - \partial_\x \overline{\Psi} \boF_n(\Psi) \Big) \ {\rm on} \ \R \times (a, b).
\end{equation}
\end{lemma}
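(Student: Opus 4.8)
The plan is to argue by induction on $n$, after first recasting the local conservation law \eqref{lehmann} into an equivalent form that contains no division by $\overline{\Psi}$, and hence stays meaningful at the zeros of $\Psi$. Throughout I write \eqref{GP} as $\partial_t \Psi = i \partial_\x^2 \Psi - i \Psi(|\Psi|^2 - 1)$, together with its conjugate $\partial_t \overline{\Psi} = - i \partial_\x^2 \overline{\Psi} + i \overline{\Psi}(|\Psi|^2 - 1)$, and I abbreviate
\[
S_n = \sum_{j=1}^{n-1} \boF_j(\Psi) \boF_{n-j}(\Psi), \qquad \text{so that} \qquad \boF_{n+1}(\Psi) = \partial_\x \boF_n(\Psi) + \overline{\Psi} \, S_n
\]
by \eqref{recurFn}.

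The first step is a reduction. Using $f_n = \overline{\Psi} \boF_n$ and $f_{n+1} = \overline{\Psi} \boF_{n+1}$ from \eqref{formfn}, I would expand both sides of \eqref{lehmann}, insert the equation for $\partial_t \overline{\Psi}$, and cancel the common second order term $- i \partial_\x^2 \overline{\Psi} \, \boF_n$. Replacing $\boF_{n+1} - \partial_\x \boF_n$ by $\overline{\Psi} S_n$ then shows that \eqref{lehmann} is exactly $\overline{\Psi}$ times the identity
\[
\partial_t \boF_n = i \partial_\x \boF_{n+1} + i \, \partial_\x \overline{\Psi} \, S_n - i (|\Psi|^2 - 1) \boF_n. \qquad (\star_n)
\]
The advantage of $(\star_n)$ is that every term is a polynomial in $\Psi$, $\overline{\Psi}$ and their $\x$-derivatives, so it carries no singularity: I would prove $(\star_n)$ everywhere, including where $\Psi$ vanishes, and then recover \eqref{lehmann} by multiplying through by $\overline{\Psi}$. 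This is precisely where the nonsingular form \eqref{formfn}--\eqref{recurFn} pays off.

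The base case $n = 1$ is a direct check: since $\boF_1 = - \Psi/2$ and $S_1 = 0$, both sides of $(\star_1)$ equal $- \tfrac{i}{2} \partial_\x^2 \Psi + \tfrac{i}{2} \Psi(|\Psi|^2 - 1)$. For the inductive step I assume $(\star_m)$ for all $m \leq n$ and derive $(\star_{n+1})$. Differentiating the recurrence in time gives
\[
\partial_t \boF_{n+1} = \partial_\x (\partial_t \boF_n) + (\partial_t \overline{\Psi}) \, S_n + \overline{\Psi} \, \partial_t S_n,
\]
where $\partial_t S_n = 2 \sum_{j=1}^{n-1} (\partial_t \boF_j) \boF_{n-j}$ by the symmetry of the convolution sum. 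Into this I substitute the equation for $\partial_t \overline{\Psi}$ and the induction hypotheses $(\star_m)$ for every $\partial_t \boF_m$ appearing (all with $m \leq n$). After collecting terms, the $\partial_\x^2 \overline{\Psi} \, S_n$ contributions cancel, and the terms proportional to $(|\Psi|^2 - 1)$ combine into $- i(|\Psi|^2 - 1) \boF_{n+1}$ via $\partial_\x \boF_n = \boF_{n+1} - \overline{\Psi} S_n$; matching the remainder against $(\star_{n+1})$ reduces the claim to a purely spatial, time-independent algebraic identity among the $\boF_j$ and the sums $S_m$.

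That spatial identity is the crux, and I expect it to be the main obstacle. Concretely, the reduction leaves one to verify
\begin{align*}
\partial_\x \overline{\Psi} \, \partial_\x S_n - \partial_\x(|\Psi|^2) \boF_n & + 2 \overline{\Psi} \sum_{j=1}^{n-1} (\partial_\x \boF_{j+1}) \boF_{n-j} + 2 \overline{\Psi} \, \partial_\x \overline{\Psi} \sum_{j=1}^{n-1} S_j \boF_{n-j}\\
& = 2 \partial_\x \overline{\Psi} \, S_{n+1} + \overline{\Psi} \, \partial_\x S_{n+1},
\end{align*}
which follows from the spatial recurrence \eqref{recurFn} and the definition of $S_m$ alone. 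I would prove it by a separate induction (or a direct combinatorial argument), repeatedly using $\partial_\x \boF_j = \boF_{j+1} - \overline{\Psi} S_j$ to remove derivatives and exploiting the symmetry $j \leftrightarrow m - j$ to telescope the convolution sums. I have checked $n = 1$ by hand, where both sides equal $- \boF_1 \partial_\x(|\Psi|^2)$; the bookkeeping for general $n$ is delicate but structurally identical.

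Finally, the regularity assertions follow from \eqref{aig} by inspection. The map $\boF_n$, and hence $f_n$, is a polynomial in $\Psi, \overline{\Psi}$ and their $\x$-derivatives up to order $n - 1$, while $f_{n+1}$ and the flux $f_{n+1} - \partial_\x \overline{\Psi} \, \boF_n$ involve derivatives up to order $n$. Thus $\Psi \in \boC^0((a,b), \boC^{n+1}(\R))$ already yields $f_n, f_{n+1} \in \boC^0((a,b), \boC^1(\R))$; and when differentiating $f_n$ in time, each factor $\partial_t \partial_\x^m \Psi = \partial_\x^m \partial_t \Psi$ with $m \leq n-1$ is controlled by rewriting $\partial_t \Psi$ through \eqref{GP}, so that $\Psi \in \boC^1((a,b), \boC^{n-1}(\R))$ gives $f_n \in \boC^1((a,b), \boC^0(\R))$.
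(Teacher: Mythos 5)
Your proposal is correct and follows essentially the same route as the paper: your nonsingular identity $(\star_n)$ is precisely the paper's identity \eqref{nomura} (Lemma \ref{ConsFn}), proved there by induction on $n$, after which \eqref{lehmann} is recovered by multiplying by $\overline{\Psi}$ exactly as in your reduction. The one step you defer --- the spatial convolution identity --- is true and is exactly what the paper verifies in \eqref{cdo1}--\eqref{cdo3}; moreover it needs no separate induction, only the direct computation you describe: writing $-\partial_\x\big(|\Psi|^2\big)\boF_n = 2\,\partial_\x\overline{\Psi}\,\boF_1\boF_n + 2\,\overline{\Psi}\,\big(\partial_\x\boF_1\big)\boF_n$ (since $\boF_1 = -\Psi/2$) supplies precisely the missing endpoint terms that extend your sums from $j \leq n-1$ to $j \leq n$, so that they reassemble into $2\,\partial_\x\overline{\Psi}\,S_{n+1} + \overline{\Psi}\,\partial_\x S_{n+1}$, while the two $\overline{\Psi}\,\partial_\x\overline{\Psi}\sum_j S_j \boF_{n-j}$ contributions cancel outright.
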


We will first consider the maps $\boF_n(\Psi)$ defined by \eqref{recurFn}, and prove

\begin{lemma}
\label{ConsFn}
Let $- \infty \leq a < b \leq + \infty$ and $n \geq 1$. Consider a solution $\Psi$ to \eqref{GP} which satisfies \eqref{aig}. Then, the map $t \mapsto \boF_n(\Psi(\cdot, t))$ is in $\boC^0((a, b), \boC^1(\R)) \cap \boC^1((a, b), \boC^0(\R))$, while the function $t \mapsto \boF_{n+1}(\Psi(\cdot, t))$ belongs to $\boC^0((a, b), \boC^1(\R))$. Moreover, they satisfy
\begin{equation}
\label{nomura}
\partial_t \big( \boF_n(\Psi) \big) = i \boF_n(\Psi) - i |\Psi|^2 \boF_n(\Psi) + i \partial_\x \overline{\Psi} \sum_{j = 1}^{n - 1} \boF_j(\Psi) \boF_{n - j}(\Psi) + i \partial_\x \big( \boF_{n+1}(\Psi) \big).
\end{equation}
\end{lemma}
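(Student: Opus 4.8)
The plan is to proceed by induction on $n$, treating the regularity assertions and the identity \eqref{nomura} together. Throughout I abbreviate $S_n \equiv \sum_{j=1}^{n-1} \boF_j(\Psi) \boF_{n-j}(\Psi)$, so that the recursion \eqref{recurFn} reads $\boF_{n+1}(\Psi) = \partial_\x \boF_n(\Psi) + \overline{\Psi} S_n$, and I record the two transport formulae obtained from \eqref{GP} and its complex conjugate, namely $\partial_t \Psi = i \partial_\x^2 \Psi - i |\Psi|^2 \Psi + i \Psi$ and $\partial_t \overline{\Psi} = - i \partial_\x^2 \overline{\Psi} + i |\Psi|^2 \overline{\Psi} - i \overline{\Psi}$.

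For the regularity, I would use that $\boF_n(\Psi)$ is a polynomial in $\Psi, \overline{\Psi}$ and their spatial derivatives up to order $n-1$. Since $\Psi \in \boC^0((a,b), \boC^{n+1}(\R))$, every spatial derivative of $\Psi$ of order at most $n+1$ is continuous in time with values in $\boC^0(\R)$; as $\boF_n(\Psi), \boF_{n+1}(\Psi)$ and their first spatial derivatives only involve derivatives of order at most $n+1$, this yields $\boF_n(\Psi), \boF_{n+1}(\Psi) \in \boC^0((a,b), \boC^1(\R))$. For the time derivative, the chain rule writes $\partial_t \boF_n(\Psi)$ as a sum of products of spatial derivatives of $\Psi, \overline{\Psi}$ with a single factor $\partial_\x^k \partial_t \Psi$, $k \le n-1$; the hypothesis $\Psi \in \boC^1((a,b), \boC^{n-1}(\R))$ makes these factors continuous with values in $\boC^0(\R)$, giving $\boF_n(\Psi) \in \boC^1((a,b), \boC^0(\R))$ and justifying the termwise differentiations below.

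The base case $n = 1$ follows by inserting $\boF_1(\Psi) = - \Psi/2$ and $\boF_2(\Psi) = - \frac{1}{2} \partial_\x \Psi$ into both sides and using the transport formula for $\partial_t \Psi$. For the induction step, I differentiate the recursion in time,
\[ \partial_t \boF_{n+1}(\Psi) = \partial_\x \big( \partial_t \boF_n(\Psi) \big) + (\partial_t \overline{\Psi}) S_n + \overline{\Psi}\, \partial_t S_n, \]
then substitute the induction hypothesis \eqref{nomura} for $\partial_t \boF_n(\Psi)$ and for each $\partial_t \boF_j(\Psi)$ of index $j \le n-1$ occurring in $\partial_t S_n$ (available by the strong induction hypothesis), and replace $\partial_t \overline{\Psi}$ by its transport formula. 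The target is to recover the right-hand side of \eqref{nomura} at order $n+1$, that is $i \boF_{n+1} - i |\Psi|^2 \boF_{n+1} + i (\partial_\x \overline{\Psi}) S_{n+1} + i \partial_\x \boF_{n+2}$ with $\boF_{n+2} = \partial_\x \boF_{n+1} + \overline{\Psi} S_{n+1}$.

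The heart of the argument, and its main obstacle, is this reorganization, which I would tame by systematically symmetrizing the sums through the change of index $j \mapsto n-j$. This gives $\partial_t S_n = 2 \sum_{j=1}^{n-1} (\partial_t \boF_j) \boF_{n-j}$ and $\partial_\x S_n = 2 \sum_{j=1}^{n-1} \boF_j \partial_\x \boF_{n-j}$, and similarly for $\partial_\x S_{n+1}$. Expanding the substituted hypothesis produces quadratic terms reassembling into $i S_n - i |\Psi|^2 S_n$, cubic terms $\sum_j S_j \boF_{n-j}$ which I rewrite via the recursion $\boF_{n+1-j} = \partial_\x \boF_{n-j} + \overline{\Psi} S_{n-j}$ to identify $\overline{\Psi} \sum_j \boF_j S_{n-j} = S_{n+1} - \sum_j \boF_j \partial_\x \boF_{n-j} - \boF_n \boF_1$, and derivative terms $\sum_j (\partial_\x \boF_{j+1}) \boF_{n-j}$ which, after reindexing and isolating the boundary contribution carrying $\boF_1 = - \Psi/2$, supply $\overline{\Psi} \partial_\x S_{n+1}$. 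The leftover scalar multiples of $\boF_n$ then cancel through the elementary identity $\partial_\x |\Psi|^2 = \overline{\Psi} \partial_\x \Psi + \Psi \partial_\x \overline{\Psi}$, while the surviving derivative sums cancel against the contribution $i (\partial_\x \overline{\Psi}) \partial_\x S_n$. I expect the only delicate point to be the careful tracking of the $j=1$ and $j=n$ boundary terms and the matching of the cubic terms against $S_{n+1}$; no analytic subtlety intervenes, the differentiations being justified by the regularity established in the second step.
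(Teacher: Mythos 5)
Your proposal is correct and follows essentially the same route as the paper: induction on $n$, time-differentiation of the recursion \eqref{recurFn}, substitution of the inductive identity \eqref{nomura} and of \eqref{GP} for $\partial_t \overline{\Psi}$, and an algebraic regrouping of the convolution sums (your symmetrized sums and boundary-term bookkeeping correspond exactly to the paper's identities \eqref{cdo1}, \eqref{cdo2} and \eqref{cdo3}). The cancellations you describe — the cubic terms recombining into $S_{n+1}$ via the recursion, the $\boF_n$ multiples cancelling through $\partial_\x |\Psi|^2 = \overline{\Psi}\partial_\x \Psi + \Psi \partial_\x \overline{\Psi}$, and the $\partial_\x \overline{\Psi}\, \partial_\x S_n$ terms cancelling each other — are precisely the ones that make the paper's computation close.
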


Lemma \ref{Consfn} is then a direct consequence of Lemma \ref{ConsFn}.

\begin{proof}[Proof of Lemma \ref{Consfn}]
Notice first that, in view of assumption \eqref{aig} and formulae \eqref{recurfn} and \eqref{recurFn}, the maps $f_j(\Psi)$ and $\boF_j(\Psi)$ belong to $\boC^0((a, b), \boC^1(\R))$ for any $1 \leq j \leq n + 1$, and the functionals $f_n(\Psi)$ and $\boF_n(\Psi)$ are also in $\boC^1((a, b), \boC^0(\R))$. Therefore, in view of \eqref{formfn}, we can write
$$\partial_t \big( f_n(\Psi) \big) = \partial_t \overline{\Psi} \boF_n(\Psi) + \overline{\Psi} \partial_t \big( \boF_n(\Psi) \big),$$
so that, by \eqref{GP} and \eqref{nomura},
$$\partial_t \big( f_n(\Psi) \big) = i \Big( - \partial_\x^2 \overline{\Psi} \boF_n(\Psi) + \overline{\Psi} \partial_\x \overline{\Psi} \sum_{j = 1}^{n - 1} \boF_j(\Psi) \boF_{n - j}(\Psi) + \overline{\Psi} \partial_\x \big( \boF_{n+1}(\Psi) \big) \Big).$$
In view of \eqref{recurFn}, we are led to
$$\partial_t \big( f_n(\Psi) \big) = i \Big( - \partial_\x^2 \overline{\Psi} \boF_n(\Psi) + \partial_\x \overline{\Psi} \boF_{n+1}(\Psi) - \partial_\x \overline{\Psi} \partial_\x \big( \boF_n(\Psi) \big) + \overline{\Psi} \partial_\x \big( \boF_{n+1}(\Psi) \big) \Big),$$
which completes the proof of \eqref{lehmann}, invoking definition \eqref{formfn}.
\end{proof}

We now provide the proof of Lemma \ref{ConsFn}.

\begin{proof}[Proof of Lemma \ref{ConsFn}]
The proof is by induction on $n \in \N^*$. For $n = 1$, it follows from \eqref{GP} that
\begin{align}
\partial_t \big( \boF_1(\Psi) \big) = - \frac{1}{2} \partial_t \Psi = \frac{i}{2} \Big( - \partial_\x^2 \Psi - \Psi + |\Psi|^2 \Psi \Big) = i \boF_1(\Psi) - i |\Psi|^2 \boF_1(\Psi) + i \partial_\x \big( \boF_2(\Psi) \big),
\end{align}
so that \eqref{nomura} holds for $n = 1$. We now turn to the case $n = N + 1$, assuming that the conclusion of Lemma \ref{Consfn} holds for any $1 \leq n \leq N$.
Notice first that, in view of assumption \eqref{aig} and formulae \eqref{recurfn} and \eqref{recurFn}, the maps $\boF_j(\Psi)$ are in $\boC^0((a, b), \boC^1(\R))$ for any $1 \leq j \leq N + 2$, while the functional $\boF_{N + 1}(\Psi)$ also belongs to $\boC^1((a, b), \boC^0(\R))$. Therefore, in view of \eqref{recurFn}, we can write
\begin{align*}
\partial_t \big( \boF_{N+1}(\Psi) \big) = & \partial_t \partial_\x \big( \boF_N(\Psi) \big) + \partial_t \overline{\Psi} \sum_{j = 1}^{N - 1} \boF_j(\Psi) \boF_{N-j}(\Psi) \\ & + \overline{\Psi} \sum_{j = 1}^{N - 1} \Big( \partial_t \big( \boF_j(\Psi) \big) \boF_{N-j}(\Psi) + \boF_j(\Psi) \big( \partial_t \boF_{N-j}(\Psi) \big) \Big).
\end{align*}
Invoking the inductive assumption combined with \eqref{GP}, we are led to
\begin{equation}
\label{stanley}
\begin{split}
\partial_t \big( \boF_{N+1}(\Psi) \big) = & i \bigg( \big( 1 - |\Psi|^2 \big) \partial_\x \big( \boF_N(\Psi) \big) - \big( \Psi \partial_\x \overline{\Psi} + \overline{\Psi} \partial_\x \Psi \big) \boF_N(\Psi)\\
+ & \partial_\x^2 \big( \boF_{N+1}(\Psi) \big) + \overline{\Psi} \big( 1 - |\Psi|^2 \big) \sum_{j = 1}^{N - 1} \boF_j(\Psi) \boF_{N-j}(\Psi)\\
+ & \partial_\x \overline{\Psi} \sum_{j = 1}^{N - 1} \Big( \partial_\x \big( \boF_j(\Psi) \big) \boF_{N-j}(\Psi) + \boF_j(\Psi) \partial_\x \big( \boF_{N-j}(\Psi) \big) \Big)\\
+ & \overline{\Psi} \sum_{j = 1}^{N - 1} \Big( \partial_\x \big( \boF_{j+1}(\Psi) \big) \boF_{N-j}(\Psi) + \boF_j(\Psi) \partial_\x \big( \boF_{N+1-j}(\Psi) \big) \Big)\\
+ & \overline{\Psi} \partial_\x \overline{\Psi} \sum_{j = 1}^{N - 1} \Big( \boF_{N-j}(\Psi) \sum_{k = 1}^{j - 1} \boF_k(\Psi) \boF_{j-k}(\Psi) + \boF_j(\Psi) \sum_{k = 1}^{N - j - 1} \boF_k(\Psi) \boF_{N-j-k}(\Psi) \Big) \bigg).
\end{split}
\end{equation}
In view of \eqref{boF1}, we first have
\begin{equation}
\begin{split}
\label{cdo1}
- \overline{\Psi} \partial_\x \Psi \boF_N(\Psi) & + \overline{\Psi} \sum_{j = 1}^{N - 1} \Big( \partial_\x \big( \boF_{j+1}(\Psi) \big) \boF_{N-j}(\Psi) + \boF_j(\Psi) \partial_\x \big( \boF_{N+1-j}(\Psi) \big) \Big)\\
= & \overline{\Psi} \sum_{j = 1}^N \Big( \partial_\x \big( \boF_j(\Psi) \big) \boF_{N+1-j}(\Psi) + \boF_j(\Psi) \partial_\x \big( \boF_{N+1-j}(\Psi) \big) \Big),
\end{split}
\end{equation}
whereas, by formula \eqref{recurFn},
\begin{equation}
\label{cdo2}
\big( 1 - |\Psi|^2 \big) \Big( \partial_\x \big( \boF_N(\Psi) \big) + \overline{\Psi} \sum_{j = 1}^{N - 1} \boF_j(\Psi) \boF_{N-j}(\Psi) \Big) = \big( 1 - |\Psi|^2 \big) \boF_{N+1}(\Psi),
\end{equation}
and
\begin{equation}
\begin{split}
\label{cdo3}
- \Psi \partial_\x \overline{\Psi} & \boF_N(\Psi) + \overline{\Psi} \partial_\x \overline{\Psi} \sum_{j = 1}^{N - 1} \Big( \boF_{N-j}(\Psi) \sum_{k = 1}^{j - 1} \boF_k(\Psi) \boF_{j-k}(\Psi) + \boF_j(\Psi) \sum_{k = 1}^{N - j - 1} \boF_k(\Psi) \boF_{N-j-k}(\Psi) \Big)\\
= & 2 \partial_\x \overline{\Psi} \sum_{j = 1}^N \boF_j(\Psi) \boF_{N+1-j}(\Psi) - \partial_\x \overline{\Psi} \sum_{j = 1}^{N - 1} \Big( \partial_\x \big( \boF_j(\Psi) \big) \boF_{N-j}(\Psi) + \boF_j(\Psi) \partial_\x \big( \boF_{N-j}(\Psi) \big) \Big).
\end{split}
\end{equation}
Hence, we deduce from \eqref{stanley}, \eqref{cdo1}, \eqref{cdo2} and \eqref{cdo3} that
\begin{equation}
\label{morgan}
\begin{split}
\partial_t \big( \boF_{N+1}(\Psi) \big) = i \bigg( \big( 1 - & |\Psi|^2 \big) \boF_{N+1}(\Psi) + \partial_\x \overline{\Psi} \sum_{j = 1}^N \boF_j(\Psi) \boF_{N+1-j}(\Psi)\\
+ & \partial_\x \Big( \partial_\x \big( \boF_{N+1}(\Psi) \big) + \overline{\Psi} \sum_{j = 1}^N \boF_j(\Psi) \boF_{N+1-j}(\Psi) \Big) \bigg).
\end{split}
\end{equation}
In view of \eqref{recurFn}, the second line in \eqref{morgan} is equal to $\partial_\x \big( \boF_{N+2}(\Psi) \big)$, so that \eqref{nomura} holds for $n = N + 1$. This completes the proof of Lemma \ref{ConsFn} by induction.
\end{proof}

We finally turn to the proof of Theorem \ref{Invconserved}

\begin{proof}[Proof of Theorem \ref{Invconserved}]
We first assume that in addition $\Psi_0 \in X^9(\R)$. In this situation, the maps $t \mapsto E_k(\Psi(\cdot, t))$ and $t \mapsto P_k(\Psi(\cdot, t))$ are in $\boC^1(\R, \R)$, while by the Sobolev embedding theorem, the map $t \mapsto \Psi(\cdot, t)$ is in $\boC^1(\R, \boC^8(\R))$ and $\boC^0(\R, \boC^{10}(\R))$. Hence, in view of Lemma \ref{Consfn},
\begin{equation}
\label{mitsu}
\partial_t \Big( f_n(\Psi) \Big) = i \partial_\x \Big( f_{n + 1}(\Psi) - \partial_\x \overline{\Psi} \boF_n(\Psi) \Big) \ {\rm on} \ \R,
\end{equation}
for any $1 \leq n \leq 9$.

Now consider, for instance, the map $t \mapsto E_2(\Psi(\cdot, t))$. In view of \eqref{renorE2}, its derivative is, at least formally, given by
$$\frac{\rm d}{\rm dt} \Big( E_2(\Psi(\cdot, t)) \Big) = - \int_{\R} \Big( \partial_t f_5(\Psi) + 3 \partial_t f_3(\Psi) + \frac{3}{2} \partial_t f_1(\Psi) \Big),$$
so that by \eqref{mitsu}, we formally have
$$\frac{\rm d}{\rm dt} \Big( E_2(\Psi(\cdot, t)) \Big) = - i \int_{\R} \partial_\x \bigg( f_6(\Psi) + 3 f_4(\Psi) + \frac{3}{2} f_2(\Psi) - \partial_\x \overline{\Psi} \Big( \boF_5(\Psi) + 3 \boF_2(\Psi) + \frac{3}{2} \boF_1(\Psi) \Big) \bigg) = 0,$$
i.e. the quantity $E_2(\Psi)$ is formally conserved by \eqref{GP}. In particular, the proof of the conservation of $E_2$ along the Gross-Pitaevskii flow reduces to drop some integrability difficulties in the above formal argument. Therefore, given any $R > 1$, we introduce some cut-off function $\chi \in \boC^\infty(\R, [0, 1])$ such that
\begin{equation}
\label{plouto}
\chi = 1 \ {\rm on} \ (-1, 1), \ {\rm and} \ \chi = 0 \ {\rm on} \ \R \setminus (-2, 2),
\end{equation}
and denote
\begin{equation}
\label{dingo}
\chi_R(\x) = \chi \Big( \frac{\x}{R} \Big), \ \forall \x \in \R.
\end{equation}
Since the map $t \mapsto \Psi(\cdot, t)$ belongs to $\boC^1(\R, X^9(\R))$, we then have
\begin{equation}
\label{mickey}
\frac{\rm d}{\rm dt} \Big( E_2(\Psi(\cdot, t)) \Big) = \int_\R \partial_t \big( e_2(\Psi(\cdot,t)) \big) = \underset{R \to + \infty}{\lim} \int_\R \chi_R(\x) \partial_t \big( e_2(\Psi(\x,t)) \big) d\x,
\end{equation}
where we let
$$E_2(\psi) \equiv \int_\R e_2(\psi).$$
We now make use of formal relation \eqref{renorE2} to compute
$$\int_\R \chi_R(\x) \partial_t \big( e_2(\Psi(\x, t)) \big) d\x = - \int_{\R} \chi_R \Big( \partial_t f_5(\Psi) + 3 \partial_t f_3(\Psi) + \frac{3}{2} \partial_t f_1(\Psi) \Big) + \int_{\R} \partial_\x \chi_R \ Q_1(\Psi, \partial_t \Psi),$$
where, using definitions \eqref{f1}, \eqref{f3}, \eqref{f5} and \eqref{E2}, and the Sobolev embedding theorem, the function $Q_1(\Psi, \partial_t \Psi)$ tends to $0$ at $\pm \infty$. Invoking \eqref{mitsu} and integrating by parts once more, we are led to
\begin{equation}
\label{minnie}
\int_\R \chi_R(\x) \partial_t \big( e_2(\Psi(\x, t)) \big) d\x = \int_{\R} \partial_\x \chi_R \ Q_2(\Psi, \partial_t \Psi),
\end{equation}
where
$$Q_2(\Psi, \partial_t \Psi) = Q_1(\Psi, \partial_t \Psi) + i f_6(\Psi) + 3 i f_4(\Psi) + \frac{3}{2} i f_2(\Psi) - i \partial_\x \overline{\Psi} \Big( \boF_5(\Psi) + 3 \boF_2(\Psi) + \frac{3}{2} \boF_1(\Psi) \Big),$$
also tends to $0$ at $\pm \infty$. Finally, notice that when
$$f(\x) \to 0, \ {\rm as} \ |\x| \to + \infty,$$
we have
$$\int_{\R} \partial_\x^j \chi_R(\x) f(\x) d\x = \frac{1}{R^{j-1}} \bigg( \int_1^2 \partial_\x^j \chi(\x) f(R \x) d\x + \int_{- 2}^{- 1} \partial_\x^j \chi(\x) f(R \x) d\x \bigg) \to 0, \ {\rm as} \ R \to + \infty,$$
so that, in view of \eqref{mickey} and \eqref{minnie}, we obtain at the limit $R \to + \infty$,
$$\frac{\rm d}{\rm dt} \Big( E_2(\Psi(\cdot, t)) \Big) = 0,$$
which gives \eqref{buffet} for the quantity $E_2$.

Using formal identities \eqref{renorE3}, \eqref{renorE4}, \eqref{renorP2}, \eqref{renorP3} and \eqref{renorP4}, the proofs are identical for the functionals $E_3$, $E_4$, $P_2$, $P_3$ and $P_4$, so that we omit them.

In the general case where we only have $\Psi_0 \in X^k(\R)$, we first approximate $\Psi_0$ by a sequence of functions $\psi_n$ in $X^9(\R)$ for the $X^k$-distance (see e.g. \cite{Gerard1}), and then use the continuity of the flow map $\Psi_0 \mapsto \Psi(\cdot, T)$ in $X^k(\R)$ for any fixed $T$, and the continuity of the functionals $E_k$ and $p_k$ with respect to the $X^k$-distance.
\end{proof}

\section{Invariants in the transonic limit}
\label{Rescaledinv}

In this section, we analyse the expressions of the invariant quantities introduced in the previous section in the slow variables. Therefore, we introduce the quantities $\boE_k(N_\eps, \Theta_\eps)$ and $\boP_k(N_\eps, \Theta_\eps)$ defined by
\begin{equation}
\label{slowEk}
E_k(\Psi) = \frac{\varepsilon^{2 k + 1}}{18} \boE_k(N_\eps, \Theta_\eps),
\end{equation}
and
\begin{equation}
\label{slowpk}
\begin{split}
p_k(\Psi) = & \frac{\varepsilon^{2 k + 1}}{18} \boP_k(N_\eps, \Theta_\eps).
\end{split}
\end{equation}
We also set
$$m_\eps = 1 - \frac{\eps^2}{6} N_\eps.$$
We now derive the precise expansions of $\boE_k$ and $\boP_k$ and stress the relationship with the corresponding \eqref{KdV} invariants.

\subsection{Formulae of the invariants in the rescaled variables}
\label{Squale}

For the $k^{th}$-energies defined by \eqref{E2}, \eqref{E3} and \eqref{E4}, a direct computation provides, in view of definitions \eqref{slow-var} of $N_\varepsilon$ and $\Theta_\varepsilon$,

\begin{lemma}
\label{Esquale}
Let $2 \leq k \leq 4$ and $\varepsilon > 0$. Given any function $\Psi$ in $X^k(\R)$ which satisfies \eqref{toutpetit}, and denoting $N_\varepsilon$ and $\Theta_\varepsilon$, the functions defined by \eqref{slow-var}, we have
\begin{equation}
\begin{split}
\label{boE2}
\boE_2(N_\eps, \Theta_\eps) & \equiv \frac{1}{8} \int_\R \bigg( \big( \partial_x N_\eps \big)^2 + m_\eps \Big( \partial_x^2 \Theta_\eps - \frac{\eps^2}{6 m_\eps} \partial_x N_\eps \partial_x \Theta_\eps \Big)^2 - \frac{1}{6} N_\eps^3 - \frac{m_\eps}{2} N_\eps \big( \partial_x \Theta_\eps \big)^2 \bigg)\\
& + \frac{\eps^2}{16} \int_\R \frac{1}{m_\eps} \bigg( \partial_x^2 N_\eps + \frac{m_\eps}{6} \big( \partial_x \Theta_\eps \big)^2 + \frac{\eps^2}{12 m_\eps} (\partial_x N_\eps)^2 \bigg)^2 - \frac{\eps^2}{32} \int_\R \boR_2(N_\eps, \Theta_\eps),
\end{split}
\end{equation}
with
\begin{equation}
\label{R2eps}
\boR_2(N_\eps, \Theta_\eps) \equiv \frac{N_\eps (\partial_x N_\eps)^2}{m_\eps},
\end{equation}
\begin{equation}
\begin{split}
\label{boE3}
\boE_3(N_\eps, \Theta_\eps) = & \frac{1}{8} \int_\R \bigg( m_\eps \Big( \partial_x^3 \Theta_\eps - \frac{\eps^2}{72} \big( \partial_x \Theta_\eps \big)^3 - \frac{\eps^2 \partial_x N_\eps \partial_x^2 \Theta_\eps}{4 m_\eps} - \frac{\eps^2 \partial_x^2 N_\eps \partial_x \Theta_\eps}{4 m_\eps} - \frac{\eps^4 (\partial_x N_\eps)^2 \partial_x \Theta_\eps}{48 m_\eps^2} \Big)^2\\
& + \big( \partial_x^2 N_\eps \big)^2 - \frac{5}{6} \Big( 2 \partial_x N_\eps \partial_x \Theta_\eps \partial_x^2 \Theta_\eps + N_\eps \big( \partial_x^2 \Theta_\eps \big)^2 + N_\eps \big( \partial_x N_\eps \big)^2 \Big) + \frac{5}{144} \Big( \big( \partial_x \Theta_\eps \big)^4\\
& + 6 N_\eps^2 \big( \partial_x \Theta_\eps \big)^2 + N_\eps^4 \Big) \bigg) + \frac{\eps^2}{16} \int_\R \frac{1}{m_\eps} \bigg( \partial_x^3 N_\eps - \frac{\eps^2}{24} \partial_x N_\eps \big( \partial_x \Theta_\eps \big)^2 + \frac{m_\eps}{2} \partial_x \Theta_\eps \partial_x^2 \Theta_\eps\\
& + \frac{\eps^2}{4 m_\eps} \partial_x N_\eps \partial_x^2 N_\eps + \frac{\eps^4}{48 m_\eps^2} \big( \partial_x N_\eps \big)^3 \bigg)^2 + \frac{\eps^2}{96} \int_\R \boR_3(N_\eps, \Theta_\eps),
\end{split}
\end{equation}
where
\begin{equation}
\begin{split}
\label{R3eps}
& \boR_3(N_\eps, \Theta_\eps) = \frac{5}{3} N_\eps^2 \big( \partial_x^2 \Theta_\eps \big)^2 - \frac{5}{4} \big( \partial_x N_\eps \big)^2 \big( \partial_x \Theta_\eps \big)^2 - 5 N_\eps \partial_x^2 N_\eps \big( \partial_x \Theta_\eps \big)^2 - \frac{5}{12} N_\eps^3 \big( \partial_x \Theta_\eps \big)^2\\
& - \frac{5}{18} N_\eps \big( \partial_x \Theta_\eps \big)^4 - \frac{5}{12} N_\eps^3 \big( \partial_x \Theta_\eps \big)^2 - \frac{5}{m_\eps} N_\eps \big( \partial_x^2 N_\eps \big)^2 + \frac{5}{4 m_\eps} N_\eps^2 \big( \partial_x N_\eps \big)^2 + \frac{\eps^2}{6} \Big( \frac{5}{24} N_\eps^2 \big( \partial_x \Theta_\eps \big)^4\\
& - \frac{5}{2 m_\eps} N_\eps \big( \partial_x N_\eps \big)^2 \big( \partial_x \Theta_\eps \big)^2 - \frac{25}{24 m_\eps^2} \big( \partial_x N_\eps \big)^4 - \frac{5}{m_\eps^2} N_\eps \big( \partial_x N_\eps \big)^2 \partial_x^2 N_\eps - \frac{5 \eps ^2}{24 m_\eps^3} N_\eps \big( \partial_x N_\eps \big)^4 \Big).
\end{split}
\end{equation}
and
\begin{equation}
\begin{split}
\label{boE4}
\boE_4 & (N_\eps, \Theta_\eps) = \frac{1}{8} \int_\R \bigg( m_\eps \Big( \partial_x^4 \Theta - \frac{\eps^2}{2 m_\eps} \partial_x^2 N_\eps \partial_x^2 \Theta_\eps - \frac{\eps^2}{3 m_\eps} \partial_x N_\eps \partial_x^3 \Theta_\eps - \frac{\eps^2}{3 m_\eps} \partial_x^3 N_\eps \partial_x \Theta_\eps\\
& - \frac{\eps^2}{12} \big( \partial_x \Theta_\eps \big)^2 \partial_x^2 \Theta_\eps - \frac{\eps^4}{24 m_\eps^2} \big( \partial_x N_\eps \big)^2 \partial_x^2 \Theta_\eps - \frac{\eps^4}{12 m_\eps^2} \partial_x N_\eps \partial_x^2 N_\eps \partial_x \Theta_\eps + \frac{\eps^4}{216 m_\eps} \partial_x N_\eps \big( \partial_x \Theta_\eps \big)^3\\
& - \frac{\eps^6}{144 m_\eps^3} \big( \partial_x N_\eps \big)^3 \partial_x \Theta_\eps \Big)^2 + \big( \partial_x^3 N_\eps \big)^2 - \frac{7}{6} \Big( N_\eps \big( \partial_x^2 N_\eps \big)^2 + m_\eps N_\eps \big( \partial_x^3 \Theta_\eps \big)^2\\
& + 2 m_\eps \partial_x^2 N_\eps \partial_x \Theta_\eps \partial_x^3 \Theta_\eps \Big) + \frac{35}{72} \Big( N_\eps^2 \big( \partial_x N_\eps \big)^2 + m_\eps \big( \partial_x \Theta_\eps \big)^2 \big( \partial_x^2 \Theta_\eps \big)^2 + 4 N_\eps \partial_x N_\eps \partial_x \Theta_\eps \partial_x^2 \Theta_\eps\\
& + m_\eps N_\eps^2 \big( \partial_x^2 \Theta_\eps \big)^2 + \big( \partial_x N_\eps \big)^2 \big( \partial_x \Theta_\eps \big)^2 \Big) -\frac{7}{864} \Big( N_\eps^5 + 5 N_\eps \big( \partial_x \Theta_\eps \big)^4 + 10 N_\eps^3 \big( \partial_x \Theta_\eps \big)^2 \Big) \bigg)\\
& + \frac{\eps^2}{16} \int_\R \frac{1}{m_\eps} \Big( \partial_x^4 N_\eps + \frac{m_\eps}{2} \big( \partial_x^2 \Theta_\eps \big)^2 + \frac{2}{3} m_\eps \partial_x \Theta_\eps \partial_x^3 \Theta_\eps + \frac{\eps^2}{3 m_\eps} \partial_x N_\eps \partial_x^3 N_\eps + \frac{\eps^2}{4 m_\eps} \big( \partial_x^2 N_\eps \big)^2\\
& - \frac{\eps^2}{12} \partial_x^2 N_\eps \big( \partial_x \Theta_\eps \big)^2 - \frac{\eps^2}{6} \partial_x N_\eps \partial_x \Theta_\eps \partial_x^2 \Theta_\eps - \frac{\eps^2}{432} m_\eps \big( \partial_x \Theta_\eps \big)^4 - \frac{\eps^4}{144 m_\eps} \big( \partial_x N_\eps \big)^2 \big( \partial_x \Theta_\eps \big)^2\\
& + \frac{\eps^4}{8 m_\eps^2} \big( \partial_x N_\eps \big)^2 \partial_x^2 N_\eps + \frac{5 \eps^6}{576 m_\eps^3} \big( \partial_x N_\eps \big)^4 \Big)^2 + \frac{\eps^2}{48} \int_\R \boR_4(N_\eps, \Theta_\eps),
\end{split}
\end{equation}
where
\begin{equation}
\begin{split}
\label{R4eps}
& \boR_4(N_\eps, \Theta_\eps) = \frac{35}{432} m_\eps N_\eps^2 \big( \partial_x \Theta_\eps \big)^4 + \frac{7}{864} m_\eps \big( \partial_x \Theta_\eps \big)^6 - \frac{35}{18} m_\eps N_\eps \big( \partial_x \Theta_\eps \big)^2 \big( \partial_x \Theta_\eps \big)^2\\
& - \frac{35}{9} N_\eps^2 \partial_x N_\eps \partial_x \Theta_\eps \partial_x^2 \Theta_\eps + \frac{35}{72} \partial_x^2 N_\eps \big( \partial_x \Theta_\eps \big)^4 - \frac{175}{72} N_\eps \big( \partial_x N_\eps \big)^2 \big( \partial_x \Theta_\eps \big)^2 - \frac{35}{24} \big( \partial_x N_\eps \big)^2 \big( \partial_x^2 \Theta_\eps \big)^2\\
& + \frac{91}{24} \big( \partial_x^2 N_\eps \big)^2 \big( \partial_x \Theta_\eps \big)^2 + \frac{35}{6} \partial_x N_\eps \partial_x^2 N_\eps \partial_x \Theta_\eps \partial_x^2 \Theta_\eps + \frac{21}{12} N_\eps \partial_x^2 N_\eps \big( \partial_x^2 \Theta_\eps \big)^2 + \frac{2}{3} N_\eps \partial_x^2 N_\eps \partial_x \Theta_\eps \partial_x^3 \Theta_\eps\\
& + \frac{7}{2} \big( \partial_x^2 N_\eps \big)^3 - \frac{35}{144 m_\eps} N_\eps^3 \big( \partial_x N_\eps \big)^2 - \frac{35}{72 m_\eps} \big( \partial_x N_\eps \big)^4 + \frac{35}{24 m_\eps} N_\eps^2 \big( \partial_x^2 N_\eps \big)^2 + \frac{\eps^2}{4} \Big( \frac{7}{2 m_\eps^2} N_\eps \big( \partial_x^2 N_\eps \big)^3\\
& - \frac{7 m_\eps}{46656} N_\eps \big( \partial_x \Theta_\eps \big)^6 - \frac{35}{54} N_\eps \partial_x^2 N_\eps \big( \partial_x \Theta_\eps \big)^4 - \frac{35}{432} \big( \partial_x N_\eps \big)^2 \big( \partial_x \Theta_\eps \big)^4 + \frac{35}{72 m_\eps} N_\eps^2 \big( \partial_x N_\eps \big)^2 \big( \partial_x \Theta_\eps \big)^2\\
& + \frac{7}{ 3 m_\eps} \big( \partial_x N_\eps \big)^2 \big( \partial_x^2 N_\eps \big)^2 - \frac{35}{12 m_\eps} N_\eps \big( \partial_x N_\eps \big)^2 \big( \partial_x^2 \Theta_\eps \big)^2 - \frac{35}{18 m_\eps} \big( \partial_x N_\eps \big)^2 \partial_x^2 N_\eps \big( \partial_x \Theta_\eps \big)^2\\
& - \frac{35}{12 m_\eps} N_\eps \big( \partial_x^2 N_\eps \big)^2 \big( \partial_x \Theta _\eps \big)^2 - \frac{35}{3 m_\eps} N_\eps \partial_x N_\eps \partial_x^2 N_\eps \partial_x \Theta_\eps \partial_x^2 \Theta_\eps - \frac{35}{54 m_\eps^2} N_\eps \big( \partial_x N_\eps \big)^4 \Big)\\
& + \frac{\eps^4}{144} \Big( \frac{35}{8 m_\eps^2} \big( \partial_x N_\eps \big)^4 \big( \partial_x \Theta_\eps \big)^2 - \frac{35}{96 m_\eps} N_\eps \big( \partial_x N_\eps \big)^2 \big( \partial_x \Theta_\eps \big)^4 + \frac{35}{m_\eps^2} N_\eps \big( \partial_x N_\eps \big)^2 \partial_x^2 N_\eps \big( \partial_x \Theta_\eps \big)^2\\
& - \frac{175}{72 m_\eps^3} N_\eps^2 \big( \partial_x N_\eps \big)^4 + \frac{147}{2 m_\eps^3} N_\eps \big( \partial_x N_\eps \big)^2 \big( \partial_x^2 N_\eps \big)^2 \Big) + \frac{\eps^6}{6912} \Big( \frac{245}{m_\eps^3} N_\eps \big( \partial_x N_\eps \big)^4 \big( \partial_x \Theta_\eps \big)^2\\
& - \frac{497}{5 m_\eps^4} \big( \partial_x N_\eps \big)^6 \Big) - \frac{7 \eps^8}{2560 m_\eps^5} N_\eps \big( \partial_x N_\eps \big)^6.
\end{split}
\end{equation}
\end{lemma}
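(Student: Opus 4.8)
The plan is to obtain the three formulae by a single mechanism: substitute the hydrodynamic (Madelung) variables into the energy invariants \eqref{E2}, \eqref{E3}, \eqref{E4}, apply the long-wave rescaling \eqref{slow-var}, and reorganize the resulting polynomials into the announced sum-of-squares normal form. Writing $\Psi = \varrho \exp i \varphi$ as in \eqref{lift}, the definitions \eqref{slow-var} together with $\eta = 1 - |\Psi|^2$ furnish the dictionary
$$\varrho^2 = m_\eps = 1 - \frac{\eps^2}{6} N_\eps, \qquad \varphi = \frac{\eps}{6 \sqrt{2}} \Theta_\eps, \qquad \eta = \frac{\eps^2}{6} N_\eps.$$
Since each $E_k(\Psi)$ is a spatial integral at fixed time, I would use the spatial change of variable $x = \eps (\x + \sqrt{2} t)$ from \eqref{martinsepp}, which at fixed $t$ reduces to the two rules $\partial_\x = \eps \, \partial_x$ and $d\x = \eps^{-1} \, dx$. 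All the powers of $\eps$ in \eqref{boE2}--\eqref{boE4} are generated by these rules: differentiating the dictionary, $\partial_\x \varphi = \frac{\eps^2}{6 \sqrt{2}} \partial_x \Theta_\eps$ and $\partial_\x \varrho = - \frac{\eps^3}{12} m_\eps^{-1/2} \partial_x N_\eps$ carry, respectively, two and three extra factors of $\eps$, and this is exactly the grading one reads off in the stated expressions.

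The core structural point is that the two perfect-square blocks appearing in each $\boE_k$ arise automatically from the identity $|\partial_\x^k \Psi|^2 = (\mathrm{Re}(e^{- i \varphi} \partial_\x^k \Psi))^2 + (\mathrm{Im}(e^{- i \varphi} \partial_\x^k \Psi))^2$, the imaginary (phase) part producing the block $\tfrac18 \int m_\eps (\cdots)^2$ and the real (density) part producing the block $\tfrac{\eps^2}{16} \int m_\eps^{-1} (\cdots)^2$. For $k = 2$, for instance, one computes directly
$$2 \partial_\x \varrho \, \partial_\x \varphi + \varrho \, \partial_\x^2 \varphi = \frac{\eps^3}{6 \sqrt{2}} \sqrt{m_\eps} \Big( \partial_x^2 \Theta_\eps - \frac{\eps^2}{6 m_\eps} \partial_x N_\eps \, \partial_x \Theta_\eps \Big),$$
whose square, after the factor $\tfrac12$ and the Jacobian $\eps^{-1}$, yields precisely the leading term of \eqref{boE2} once the normalization $E_k = \tfrac{\eps^{2k+1}}{18} \boE_k$ is applied; the density part $\partial_\x^2 \varrho - \varrho (\partial_\x \varphi)^2$ reorganizes in the same way into the $\tfrac{\eps^2}{16}\int m_\eps^{-1}(\cdots)^2$ block. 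As a consistency check fixing the normalization, I would first run this procedure on $E_1 = E$ and recover \eqref{sirbu1}, which already confirms the factor $\tfrac{\eps^{2k+1}}{18}$ and the completing-of-squares recipe.

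Once every term of \eqref{E2}--\eqref{E4} is pushed through the dictionary, the remaining work is purely algebraic: expand the lower-derivative, $\eta$-weighted terms such as $\eta |\partial_\x \Psi|^2$, $\partial_\x^2 \eta \, |\partial_\x \Psi|^2$, $\eta |\partial_\x^2 \Psi|^2$, $|\partial_\x \Psi|^4$ and the powers of $\eta$, then collect everything by order in $\eps$, complete the squares in the highest $x$-derivatives, and gather all leftovers into the explicit remainders $\boR_k$. Here genuine integration by parts in $x$ is needed to reach the stated normal form; for example one trades $\int \partial_x^2 N_\eps (\partial_x \Theta_\eps)^2$ against $-2 \int \partial_x N_\eps \, \partial_x \Theta_\eps \, \partial_x^2 \Theta_\eps$ to produce the cross terms visible in \eqref{boE3} and \eqref{boE4}. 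The main obstacle is not conceptual but the sheer scale of the bookkeeping, which culminates in $\boE_4$: one must expand $|\partial_\x^4 \Psi|^2$ together with all the quintic-in-$\eta$ and mixed terms of \eqref{E4}, keep simultaneous track of the many powers of $\eps$ and of $m_\eps^{-1}$, and identify the exact quadratic completions so that the residue matches \eqref{R4eps} term by term. I would organize the computation by recording, once and for all, the expansions of $\partial_\x^j \Psi$ for $1 \leq j \leq 4$ and of the recurring building blocks in the slow variables, and then assemble the four invariants mechanically from that table, using the verified $E_1$ and $E_2$ cases as templates.
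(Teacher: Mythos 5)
Your proposal is correct and follows essentially the same route as the paper, whose entire proof of Lemma \ref{Esquale} is the phrase ``a direct computation provides, in view of definitions \eqref{slow-var}'': your Madelung dictionary, the splitting of $|\partial_\x^k \Psi|^2$ into the phase square (producing the $\tfrac{1}{8}\int_\R m_\eps(\cdots)^2$ block) and the density square (producing the $\tfrac{\eps^2}{16}\int_\R m_\eps^{-1}(\cdots)^2$ block), and the integrations by parts needed for $k=3,4$ are exactly that computation. Your intermediate identities check out against the stated formulae, e.g. $2\partial_\x\varrho\,\partial_\x\varphi + \varrho\,\partial_\x^2\varphi = \tfrac{\eps^3}{6\sqrt{2}}\sqrt{m_\eps}\big(\partial_x^2\Theta_\eps - \tfrac{\eps^2}{6 m_\eps}\partial_x N_\eps\,\partial_x\Theta_\eps\big)$ together with the Jacobian and the normalization \eqref{slowEk} reproduces the leading block of \eqref{boE2}, and the trade of $\int_\R \partial_x^2 N_\eps (\partial_x\Theta_\eps)^2$ for $-2\int_\R \partial_x N_\eps\,\partial_x\Theta_\eps\,\partial_x^2\Theta_\eps$ is indeed how the cross terms of \eqref{boE3} arise.
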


Similarly, for the $k^{th}$-renormalized momenta, we compute

\begin{lemma}
\label{Psquale}
Let $2 \leq k \leq 4$ and $\varepsilon > 0$. Given any function $\Psi$ in $X^k(\R)$ which satisfies \eqref{toutpetit}, and denoting $N_\varepsilon$ and $\Theta_\varepsilon$, the functions defined by \eqref{slow-var}, we have
\begin{equation}
\label{boP2eps}
\boP_2(N_\eps, \Theta_\eps) \equiv \frac{1}{4 \sqrt{2}} \int_\R \bigg( \partial_x N_\eps \partial_x^2 \Theta_\eps - \frac{m_\eps}{12} \big( \partial_x \Theta_\eps \big)^3 - \frac{1}{4} N_\eps^2 \partial_x \Theta_\eps \bigg) - \frac{\eps^2}{32 \sqrt{2}} \int_\R r_2(N_\eps, \Theta_\eps),
\end{equation}
with
\begin{equation}
\label{r2eps}
r_2(N_\eps, \Theta_\eps) \equiv \frac{(\partial_x N_\eps)^2 \partial_x \Theta_\eps}{m_\eps},
\end{equation}
\begin{equation}
\label{boP3eps}
\begin{split}
\boP_3(N_\eps, \Theta_\eps) \equiv & \frac{1}{4 \sqrt{2}} \int_\R \bigg( \partial_x^2 N_\eps \partial_x^3 \Theta_\eps - \frac{5}{12} \Big( \partial_x \Theta_\eps \big( \partial_x^2 \Theta_\eps \big)^2 + 2 N_\eps \partial_x N_\eps \partial_x^2 \Theta_\eps + \big( \partial_x N_\eps \big)^2 \partial_x \Theta_\eps \Big)\\
& + \frac{5}{72} \Big( m_\eps N_\eps (\partial_x \Theta_\eps)^3 + N_\eps^3 \partial_x \Theta_\eps \Big) \bigg) + \frac{\eps^2}{48 \sqrt{2}} \int_\R r_3(N_\eps, \Theta_\eps),
\end{split}
\end{equation}
with
\begin{equation}
\begin{split}
\label{r3eps}
& r_3(N_\eps, \Theta_\eps) \equiv \frac{5}{6} N_\eps \partial_x \Theta_\eps \big( \partial_x^2 \Theta_\eps \big)^2 + \frac{5}{3} \partial_x N_\eps \big( \partial_x \Theta_\eps \big)^2 \partial_x^2 \Theta_\eps - \frac{m_\eps}{72} \big( \partial_x \Theta_\eps \big)^5 + \frac{5}{4 m_\eps} N_\eps \big( \partial_x N_\eps \big)^2 \partial_x \Theta_\eps\\
& - \frac{5}{m_\eps} \partial_x N_\eps \partial_x^2 N_\eps \partial_x^2 \Theta_\eps - \frac{5}{2 m_\eps} \big( \partial_x^2 N_\eps \big)^2 \partial_x \Theta_\eps - \frac{5 \eps^2}{72 m_\eps} \big( \partial_x N_\eps \big)^2 \big( \partial_x \Theta_\eps \big)^3 - \frac{5 \eps^2}{18 m_\eps^2} \big( \partial_x N_\eps\big)^3 \partial_x^2 \Theta_\eps\\
& + \frac{25 \eps^4}{864 m_\eps^3} \big( \partial_x N_\eps \big)^4 \partial_x \Theta_\eps,
\end{split}
\end{equation}
and
\begin{equation}
\begin{split}
\label{boP4eps}
\boP_4(N_\eps, \Theta_\eps) & \equiv \frac{1}{4 \sqrt{2}} \int_\R \bigg( \partial_x^3 N_\eps \partial_x^4 \Theta_\eps - \frac{7}{12} \Big( 2 N_\eps \partial_x^2 N_\eps \partial_x^3 \Theta_\eps + \big( \partial_x^2 N_\eps \big)^2 \partial_x \Theta_\eps + m_\eps \partial_x \Theta_\eps \big( \partial_x^3 \Theta_\eps \big)^2 \Big)\\
& + \frac{35}{72} \Big( N_\eps^2 \partial_x N_\eps \partial_x^2 \Theta_\eps + N_\eps \big( \partial_x N_\eps \big)^2 \partial_x \Theta_\eps + \partial_x N_\eps \big( \partial_x \Theta_\eps \big)^2 \partial_x^2 \Theta_\eps + m_\eps N_\eps \partial_x \Theta_\eps \big( \partial_x^2 \Theta_\eps \big)^2 \Big)\\
& - \frac{7}{1728} \Big( \big( \partial_x \Theta_\eps \big)^5 + 10 m_\eps N_\eps^2 \big( \partial_x \Theta_\eps \big)^3 + 5 N_\eps^4 \partial_x \Theta_\eps \Big) \bigg) + \frac{\eps^2}{48 \sqrt{2}} \int_\R r_4(N_\eps, \Theta_\eps),
\end{split}
\end{equation}
with
\begin{equation}
\begin{split}
\label{r4eps}
& r_4(N_\eps, \Theta_\eps) \equiv 7 \Big( \frac{5}{12 m_\eps} N_\eps \big( \partial_x^2 N_\eps \big)^2 \partial_x \Theta_\eps + \frac{5}{6 m_\eps} N_\eps \partial_x N_\eps \partial_x^2 N_\eps \partial_x^2 \Theta_\eps - \frac{5}{48 m_\eps} N_\eps^2 \big( \partial_x N_\eps \big)^2 \partial_x \Theta_\eps\\
& + \frac{1}{4} \partial_x^2 N_\eps \partial_x \Theta_\eps \big( \partial_x^2 \Theta_\eps \big)^2 - \frac{5}{12} N_\eps \partial_x N_\eps \big( \partial_x \Theta_\eps \big)^2 \partial_x^2 \Theta_\eps + \frac{1}{2} \partial_x^2 N_\eps \big( \partial_x \Theta_\eps \big)^2 \partial_x^3 \Theta_\eps -\frac{25}{144} \big( \partial_x N_\eps \big)^2 \big( \partial_x \Theta_\eps \big)^3\\
& + \frac{1}{216} N_\eps \big( \partial_x \Theta_\eps \big)^5 - \frac{1}{12} \partial_x N_\eps \big( \partial_x^2 \Theta_\eps \big)^3 - \frac{5 m_\eps}{72} \big( \partial_x \Theta_\eps \big)^3 \big( \partial_x^2 \Theta_\eps \big)^2 - \frac{5}{36 m_\eps} \big( \partial_x N_\eps \big)^3 \partial_x^2 \Theta_\eps\\
& + \frac{5}{72 m_\eps} \big( \partial_x N_\eps \big)^2 \big( \partial_x \Theta_\eps \big)^3 - \frac{1}{m_\eps} \partial_x N_\eps \partial_x^3 N_\eps \partial_x^3 \Theta_\eps - \frac{1}{2 m_\eps} \big( \partial_x^3 N_\eps \big)^2 \partial_x \Theta_\eps + \frac{5}{18 m_\eps^2} \big( \partial_x N_\eps \big)^3 \partial_x^2 \Theta_\eps \Big)\\
& + \frac{\eps^2}{4} \Big( \frac{245}{432 m_\eps^2} \big( \partial_x N_\eps \big)^4 \partial_x \Theta_\eps - \frac{21}{1296} N_\eps^2 \big( \partial_x \Theta_\eps \big)^5 -\frac{m_\eps}{1296} \big( \partial_x \Theta_\eps \big)^7 - \frac{35}{36 m_\eps} \big( \partial_x^2 N_\eps \big)^2 \big( \partial_x \Theta_\eps \big)^3\\
& - \frac{35}{6 m_\eps} \partial_x N_\eps \partial_x^2 N_\eps \big( \partial_x \Theta_\eps \big)^2 \partial_x^2 \Theta_\eps - \frac{35}{12 m_\eps} \big( \partial_x N_\eps \big)^2 \partial_x \Theta_\eps \big( \partial_x \Theta_\eps \big)^2 + \frac{7}{2 m_\eps^2} \partial_x N_\eps \big( \partial_x^2 N_\eps \big)^2 \partial_x^2 \Theta_\eps\\
& - \frac{7}{m_\eps^2} \big( \partial_x N_\eps \big)^2 \partial_x^2 N_\eps \partial_x^3 \Theta_\eps +\frac{7}{2 m_\eps^2} \big( \partial_x^2 N_\eps \big)^3 \partial_x \Theta_\eps - \frac{175}{216 m_\eps^3} \big( \partial_x N_\eps \big)^4 \partial_x \Theta_\eps - \frac{7}{108} \partial_x^2 N_\eps \big( \partial_x \Theta_\eps \big)^5 \Big)\\
& + \frac{\eps^4}{48} \Big( \frac{35}{m_\eps^3} \big( \partial_x N_\eps \big)^3 \partial_x^2 N_\eps \partial_x^2 \Theta_\eps - \frac{7}{72 m_\eps} \big( \partial_x N_\eps \big)^2 \big( \partial_x \Theta_\eps \big)^5 - \frac{25}{6 m_\eps^2} \big( \partial_x N_\eps \big)^3 \big( \partial_x \Theta_\eps \big)^2 \partial_x^2 \Theta_\eps\\
& - \frac{5}{18 m_\eps^2} \big( \partial_x N_\eps \big)^2 \partial_x^2 N_\eps \big( \partial_x \Theta_\eps \big)^3 + \frac{49}{2 m_\eps^3} \big( \partial_x N_\eps \big)^2 \big( \partial_x^2 N_\eps \big)^2 \partial_x \Theta_\eps \Big) + \frac{\eps^6}{768} \Big( \frac{5}{3 m_\eps^3} \big( \partial_x N_\eps \big)^4 \big( \partial_x \Theta_\eps \big)^3\\
& + \frac{252}{5 m_\eps^4} \big( \partial_x N_\eps \big)^5 \partial_x^2 \Theta_\eps - \frac{49 \eps^2}{10 m_\eps^5} \big( \partial_x N_\eps \big)^6 \partial_x \Theta_\eps \Big).
\end{split}
\end{equation}
\end{lemma}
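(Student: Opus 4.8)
The proof is a direct, if lengthy, computation: one inserts the polar form $\Psi = \varrho \exp i\varphi$ (legitimate under \eqref{toutpetit}) into the expressions \eqref{p2}, \eqref{p3} and \eqref{p4} for the renormalized momenta, rewrites everything in terms of $\eta = 1 - \varrho^2$ and $\partial_\x \varphi$, and finally applies the scalings \eqref{slow-var} and \eqref{martinsepp}. The plan is to proceed in three stages, using the already-established formula \eqref{sirbu2} for $\boP_1$ as both a template and a consistency check.

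\emph{First stage: reduction of the bilinear densities to hydrodynamic form.} The only nonpolynomial ingredients in \eqref{p2}--\eqref{p4} are the real densities $\langle i \partial_\x^j \Psi, \partial_\x^{j-1} \Psi \rangle$ for $1 \leq j \leq 4$ together with the terms $\eta^m \langle i \partial_\x \Psi, \Psi \rangle$. Differentiating $\Psi = \varrho \exp i \varphi$ repeatedly and using the real-bilinearity of $\langle \cdot, \cdot \rangle$, each such density becomes a polynomial in $\varrho, \partial_\x \varrho, \ldots$ and $\partial_\x \varphi, \partial_\x^2 \varphi, \ldots$; substituting $\varrho^2 = 1 - \eta$, together with $2 \varrho \partial_\x \varrho = - \partial_\x \eta$ and its consequences for higher derivatives, converts these into rational expressions in $\eta$ and $\partial_\x \varphi$ whose only denominators are powers of $\varrho^2 = 1 - \eta$. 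I expect the particular combinations defining $p_k$, chosen earlier precisely to cancel the divergent contributions, to leave only integrable integrands, exactly as in Lemma \ref{DefEkPk}.

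\emph{Second stage: rescaling and counting powers of $\varepsilon$.} Under \eqref{slow-var} one has $\eta = \frac{\varepsilon^2}{6} N_\eps$, $\partial_\x \varphi = \frac{\varepsilon^2}{6 \sqrt{2}} \partial_x \Theta_\eps$ and $\varrho^2 = m_\eps$, while each spatial derivative carries a factor $\varepsilon$ (since $\partial_\x = \varepsilon \partial_x$ at fixed $t$) and $d\x = \varepsilon^{-1} dx$. Tracking these factors in every monomial of the integrand and matching against the prefactor $\varepsilon^{2k+1}/18$ in \eqref{slowpk} isolates the leading contribution, namely the first integral in each of \eqref{boP2eps}, \eqref{boP3eps}, \eqref{boP4eps}, from the $\boO(\varepsilon^2)$ correction collected into the remainder $r_k$; the higher powers $\varepsilon^4, \varepsilon^6, \ldots$ visible in \eqref{r4eps} arise from the expansion of the denominators $m_\eps^{-1}$ and from the top-order monomials. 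The template \eqref{sirbu2} already confirms the power counting and the normalizing constant $\frac{1}{4\sqrt{2}}$ in the case $k = 1$.

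\emph{Third stage, and the main obstacle: integration by parts and bookkeeping.} After substitution, many integrands acquire the displayed shape only after repeated integration by parts over $\R$ (all boundary terms vanish because the functions decay), which is what groups the terms into the compact expressions resembling rescaled \eqref{KdV} operators. The genuine difficulty here is purely combinatorial: the number of monomials grows very fast with $k$, as the formula for $\boP_4$ and its remainder \eqref{r4eps} already illustrate, and one must keep scrupulous track of the numerical coefficients, of the exact power of $m_\eps$ in each denominator, and of the $\varepsilon$-order of each contribution, so that nothing is misassigned between the leading integral and $r_k$. I would therefore carry out $\boP_2$ in full detail to fix the method, then treat $\boP_3$ and $\boP_4$ by the identical scheme, checking each outcome against the scaling relation \eqref{slowpk} and, where the structure runs parallel, against the corresponding energy computation of Lemma \ref{Esquale}.
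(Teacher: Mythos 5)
Your proposal is correct and follows essentially the same route as the paper, which states this lemma as the outcome of ``a direct computation'' from the definitions \eqref{p2}--\eqref{p4}, the polar decomposition \eqref{lift}, the rescalings \eqref{slow-var}, and the normalization \eqref{slowpk}, without writing out the details. Your three-stage plan (hydrodynamic reduction of the densities $\langle i\partial_\x^j \Psi, \partial_\x^{j-1}\Psi\rangle$ and $\eta^m\langle i\partial_\x\Psi,\Psi\rangle$, power counting in $\varepsilon$ with $\partial_\x = \varepsilon\partial_x$ and $d\x = \varepsilon^{-1}dx$, then integration by parts and bookkeeping, calibrated against \eqref{sirbu2}) is precisely that computation.
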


\subsection{Relating the \eqref{GP} invariants to the \eqref{KdV} invariants}
\label{Kdvpitre}

Recall that the Korteweg-de Vries equation is integrable, and admits an infinite number of invariants (see \cite{GarKrMi1}). The first four invariants for \eqref{KdV} are given by
\begin{align}
\label{e0KdV}
E^{KdV}_0(v) \equiv & \frac{1}{2} \int_\R v^2,\\
\label{e1KdV}
E^{KdV}_1(v) \equiv & \frac{1}{2} \int_\R \big( \partial_x v \big)^2 - \frac{1}{6} \int_\R v^3,\\
\label{e2KdV}
E^{KdV}_2(v) \equiv & \frac{1}{2} \int_\R \big( \partial_x^2 v \big)^2 - \frac{5}{6} \int_\R v \big( \partial_x v \big)^2 + \frac{5}{72} \int_\R v^4,
\end{align}
and
\begin{equation}
\label{e3KdV}
E^{KdV}_3(v) \equiv \frac{1}{2} \int_\R \big( \partial_x^3 v \big)^2 - \frac{7}{6} \int_\R v \big( \partial_x^2 v \big)^2 + \frac{35}{36} \int_\R v^2 \big( \partial_x v \big)^2 - \frac{7}{216} \int_\R v^5.
\end{equation}
Notice that the invariants $E_k$ are bounded in terms of the $ H^k$-norm, since we have
\begin{equation}
\label{bornation1}
\big| E_k^{KdV}(v) \big| \leq K \big( \| v \|_{H^{k-1}(\R)} \big) \| v \|_{H^k(\R)}^2,
\end{equation}
where $K \big( \| v \|_{H^{k-1}(\R)} \big)$ is some constant depending only on the $H^{k-1}$-norm of $v$.

Another important observation concerning the \eqref{KdV} invariants is that, given any function $v \in H^k(\R)$, the $H^k$-norm of $v$ is controlled by the first $k^{th}$-invariants of \eqref{KdV}. This claim is straightforward for $k = 0$, whereas for $k \geq 1$, we have

\begin{lemma}
\label{Kdvcontrol}
Let $1 \leq k \leq 3$ be given. Given any function $v \in H^k(\R)$, there exists some positive constant $K = K \big( \| v \|_{H^{k-1}(\R)} \big)$, depending only on the $H^{k-1}$-norm of $v$, such that
\begin{equation}
\label{bornation2}
\| \partial_x^k v \|_{L^2(\R)}^2 \leq K \Big( \| v \|_{H^{k-1}(\R)}^2 + \big| E_k^{KdV}(v) \big| \Big).
\end{equation}
\end{lemma}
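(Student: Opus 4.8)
The plan is to exploit the fact that in each invariant $E_k^{KdV}(v)$ the only term containing the top-order derivative $\partial_x^k v$ is the quadratic leading term $\tfrac12\int_\R(\partial_x^k v)^2$. Rewriting
$$\tfrac12\|\partial_x^k v\|_{L^2(\R)}^2 = E_k^{KdV}(v) - \big(\text{the remaining nonlinear terms}\big),$$
the proof reduces to controlling each of those cubic, quartic and quintic terms, all of which involve derivatives of $v$ of order at most $k-1$, in terms of $\|v\|_{H^{k-1}(\R)}$, possibly up to a small multiple of $\|\partial_x^k v\|_{L^2(\R)}^2$ that can then be absorbed into the left-hand side.

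First I would treat the cases $k=2$ and $k=3$, which are the simplest. Here every nonlinear term in $E_k^{KdV}$ — namely $\int_\R v(\partial_x v)^2$ and $\int_\R v^4$ for $k=2$, and $\int_\R v(\partial_x^2 v)^2$, $\int_\R v^2(\partial_x v)^2$, $\int_\R v^5$ for $k=3$ — involves at most $k-1$ derivatives. Using the one-dimensional Sobolev embedding $\|v\|_{L^\infty(\R)} \leq K\|v\|_{H^1(\R)}$ together with H\"older's inequality, each such term is bounded by a power $\geq 3$ of $\|v\|_{H^{k-1}(\R)}$; for instance $|\int_\R v(\partial_x^2 v)^2| \leq \|v\|_{L^\infty(\R)}\|\partial_x^2 v\|_{L^2(\R)}^2 \leq K\|v\|_{H^2(\R)}^3$. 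Since $\|v\|_{H^{k-1}(\R)}^p \leq K(\|v\|_{H^{k-1}(\R)})\|v\|_{H^{k-1}(\R)}^2$ for $p\geq 2$, the bound \eqref{bornation2} follows at once, with no absorption needed.

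The only case requiring more care is $k=1$, which I expect to be the main (if modest) obstacle. There the single nonlinear term is $-\tfrac16\int_\R v^3$, and since $H^{k-1}(\R)=L^2(\R)$ does not control $\|v\|_{L^\infty(\R)}$, the factor $\|v\|_{L^\infty}$ can no longer be estimated by $H^{k-1}$ alone. The plan is to invoke instead the Gagliardo--Nirenberg inequality $\|v\|_{L^3(\R)}^3 \leq K\|v\|_{L^2(\R)}^2\|\partial_x v\|_{L^2(\R)}$, which yields $|\int_\R v^3| \leq K\|v\|_{L^2(\R)}^2\|\partial_x v\|_{L^2(\R)}$, and then to apply Young's inequality $ab\leq \delta a^2 + C_\delta b^2$ with $a=\|\partial_x v\|_{L^2(\R)}$ and $b=\|v\|_{L^2(\R)}^2$ to obtain
$$\tfrac16\Big|\int_\R v^3\Big| \leq \delta\|\partial_x v\|_{L^2(\R)}^2 + C_\delta\|v\|_{L^2(\R)}^4.$$
Choosing $\delta=\tfrac14$ and absorbing $\delta\|\partial_x v\|_{L^2(\R)}^2$ into $\tfrac12\|\partial_x v\|_{L^2(\R)}^2$ gives $\tfrac14\|\partial_x v\|_{L^2(\R)}^2 \leq |E_1^{KdV}(v)| + C\|v\|_{L^2(\R)}^4$; bounding $\|v\|_{L^2(\R)}^4 \leq K(\|v\|_{L^2(\R)})\|v\|_{L^2(\R)}^2$ then yields \eqref{bornation2}.

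The key point throughout is simply to verify, by inspection of \eqref{e1KdV}--\eqref{e3KdV}, that no lower-order term ever carries a full $k$-th derivative, so that the delicate absorption step is needed only in the borderline case $k=1$, while for $k=2,3$ the estimate is a direct consequence of Sobolev embedding and H\"older's inequality.
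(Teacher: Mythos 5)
Your proposal is correct in substance and follows the paper's own proof essentially verbatim: for $k=2,3$ both argue that every nonlinear term in \eqref{e2KdV} and \eqref{e3KdV} contains at most $k-1$ derivatives and is therefore controlled by the Sobolev embedding and H\"older's inequality, while for $k=1$ both isolate $\int_\R v^3$, bound it by a product $\| v \|_{L^2(\R)}^a \| \partial_x v \|_{L^2(\R)}^b$ with $b<2$, and absorb the derivative factor into the left-hand side by Young's inequality. One caveat: the inequality you invoke as Gagliardo--Nirenberg, $\| v \|_{L^3(\R)}^3 \leq K \| v \|_{L^2(\R)}^2 \| \partial_x v \|_{L^2(\R)}$, is false as stated (test $v(\lambda x)$ as $\lambda \to 0$: the left-hand side scales like $\lambda^{-1}$, the right-hand side like $\lambda^{-1/2}$); the correct one-dimensional exponents are $\| v \|_{L^3(\R)}^3 \leq \| v \|_{L^2(\R)}^{5/2} \| \partial_x v \|_{L^2(\R)}^{1/2}$, obtained from $\| v \|_{L^\infty(\R)}^2 \leq \| v \|_{L^2(\R)} \| \partial_x v \|_{L^2(\R)}$. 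This slip is shared by the paper itself (its displayed intermediate bound $\| \partial_x v \|_{L^2(\R)}^2 \leq \frac{1}{3} \| v \|_{L^2(\R)}^2 \| \partial_x v \|_{L^2(\R)} + 2 | E_1^{KdV}(v) |$ rests on the same wrong-exponent estimate) and is harmless for both of you: with the correct exponents, Young's inequality still yields $\frac{1}{3} \big| \int_\R v^3 \big| \leq \frac{1}{2} \| \partial_x v \|_{L^2(\R)}^2 + K \| v \|_{L^2(\R)}^{10/3}$, and since $\| v \|_{L^2(\R)}^{10/3} \leq K \big( \| v \|_{L^2(\R)} \big) \| v \|_{L^2(\R)}^2$, the absorption argument closes and \eqref{bornation2} follows exactly as you conclude.
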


\begin{proof}
For $k = 2$ and $k = 3$, the proof of \eqref{bornation2} is a direct application of the Sobolev embedding theorem to formulae \eqref{e2KdV} and \eqref{e3KdV}. For $k = 1$, we have in view of \eqref{e1KdV},
$$\| \partial_x v \|_{L^2(\R)}^2 \leq \frac{1}{3} \| v \|_{L^2(\R)}^2 \| \partial_x v \|_{L^2(\R)} + 2 \big| E_1^{KdV}(v) \big|,$$
so that by the inequality $2 a b \leq a^2 + b^2$,
$$\| \partial_x v \|_{L^2(\R)}^2 \leq \frac{1}{9} \| v \|_{L^2(\R)}^4 + 4 \big| E_1^{KdV}(v) \big| \leq K \Big( \| v \|_{L^2(\R)}^2 + \big| E_1^{KdV}(v) \big| \Big),$$
where $K = \max \big\{ \frac{1}{9} \| v \|_{L^2(\R)}^2, 4 \big\}$.
\end{proof}

We complete the subsection showing that the \eqref{KdV} invariant $E_{k-1}^{KdV}$ is related to the \eqref{GP} invariant quantities $\boE_k \pm \sqrt{2} \boP_k$. For that purpose, assume that
$$N_\eps \to N_0 \ {\rm in} \ H^1(\R), \ {\rm and} \ \partial_x \Theta_\eps \to \partial _x \Theta_0 \ {\rm in} \ L^2(\R), \ {\rm as} \ \eps \to 0.$$
For $k = 1$, we notice in view of expansions \eqref{sirbu1} and \eqref{sirbu2}, that
\begin{equation}
\label{limit1}
\boE_1(N_\eps, \Theta_\eps) \pm \sqrt{2} \boP_1(N_\eps, \Theta_\eps) \to \frac{1}{8} \int_\R \big( N_0 \pm \partial_x \Theta_0 \big)^2 = E_0^{KdV} \Big( \frac{N_0 \pm \partial_x \Theta_0}{2} \Big), \ {\rm as} \ \eps \to 0.
\end{equation}
Similarly, it follows from Lemmas \ref{Esquale} and \ref{Psquale} that

\begin{prop}
\label{Limitep}
Let $1 \leq k \leq 4$ and $\Psi$ in $X^k(\R)$ which satisfies \eqref{toutpetit}. Denoting $N_\varepsilon$ and $\Theta_\varepsilon$ the variables defined by \eqref{slow-var}, and assuming that
$$N_\eps \to N_0 \ {\rm in} \ H^k(\R), \ {\rm and} \ \partial_x \Theta_\eps \to \partial_x \Theta_0 \ {\rm in} \ H^{k-1}(\R), \ {\rm as} \ \eps \to 0,$$
we have
$$\boE_k(N_\eps, \Theta_\eps) \pm \sqrt{2} \boP_k(N_\eps, \Theta_\eps) \to E_{k-1}^{KdV} \Big( \frac{N_0 \pm \partial_x \Theta_0}{2} \Big), \ {\rm as} \ \eps \to 0.$$
\end{prop}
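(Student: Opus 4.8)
The case $k = 1$ is already contained in \eqref{limit1}, so I shall assume $2 \leq k \leq 4$. It is convenient to write $M_\eps = \partial_x \Theta_\eps$ and $M_0 = \partial_x \Theta_0$, so that the hypotheses read $N_\eps \to N_0$ in $H^k(\R)$ and $M_\eps \to M_0$ in $H^{k-1}(\R)$. The plan is to pass to the limit $\eps \to 0$ directly in the explicit formulae of Lemmas \ref{Esquale} and \ref{Psquale}, and then to identify the resulting polynomial integral functional of $(N_0, M_0)$ with $E_{k-1}^{KdV}\big( \frac{N_0 \pm M_0}{2} \big)$.

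First I would split each of the right-hand sides in \eqref{boE2}--\eqref{boE4} and \eqref{boP2eps}--\eqref{boP4eps} into a main part, carrying no explicit factor $\eps^2$, and a remainder carrying such a factor (the latter comprising the terms $\boR_k$, $r_k$ and the inner corrections hidden in the completed squares). For the remainder I would invoke the pointwise bound $\frac14 \leq m_\eps \leq 4$ from \eqref{borninf} to control the factors $m_\eps^{\pm 1}$, together with the fact that the convergence hypotheses keep $N_\eps$ bounded in $H^k(\R)$ and $M_\eps$ bounded in $H^{k-1}(\R)$. In every such integrand the top-order derivatives ($\partial_x^k N_\eps$ and $\partial_x^{k-1} M_\eps$) occur at most quadratically and are bounded in $L^2(\R)$, while the remaining factors are of lower order, hence bounded in $L^\infty(\R)$ by the Sobolev embedding $H^{k-1}(\R) \hookrightarrow L^\infty(\R)$; pairing an $L^\infty$ factor with an $L^1$ factor (a square, or a product of two $L^2$ functions) bounds each integral uniformly in $\eps$, so every remainder is $\boO(\eps^2)$ and drops out in the limit.

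Next I would pass to the limit in the main parts. There $m_\eps = 1 - \frac{\eps^2}{6} N_\eps \to 1$ in $L^\infty(\R)$ and the inner $\eps^2$-corrections vanish, so that the main part of $\boE_k$ (resp. $\boP_k$) converges to the functional obtained from \eqref{boE2}--\eqref{boE4} (resp. \eqref{boP2eps}--\eqref{boP4eps}) by setting $m_\eps = 1$ and discarding those corrections. Each monomial integral converges because its top-order factors converge in $L^2(\R)$ and its lower-order factors converge in $L^\infty(\R)$, so the products converge in $L^1(\R)$. For $k = 2$, for instance, these limits read
\begin{equation*}
\frac18 \int_\R \Big( (\partial_x N_0)^2 + (\partial_x M_0)^2 - \tfrac16 N_0^3 - \tfrac12 N_0 M_0^2 \Big) \quad \text{and} \quad \frac{1}{4\sqrt2} \int_\R \Big( \partial_x N_0 \, \partial_x M_0 - \tfrac1{12} M_0^3 - \tfrac14 N_0^2 M_0 \Big).
\end{equation*}

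It then remains to check the purely algebraic identity that the limit of $\boE_k \pm \sqrt2 \boP_k$ equals $E_{k-1}^{KdV}(v_\pm)$ with $v_\pm = \frac{N_0 \pm M_0}{2}$. I would first observe that the substitution $M_0 \mapsto -M_0$ leaves each energy limit invariant and reverses the sign of each momentum limit (as is visible from the formulae, every monomial of $\boE_k$ being even and every monomial of $\boP_k$ odd in $M_0$ and its derivatives, a parity reflecting the symmetry $\x \mapsto -\x$ of \eqref{GP}), while exchanging $v_+$ and $v_-$; hence it suffices to treat the $+$ sign. For that sign the top-order quadratic part is transparent: the energy contributes $\frac18 \int_\R \big( (\partial_x^{k-1} N_0)^2 + (\partial_x^{k-1} M_0)^2 \big)$, the momentum the cross term $\frac14 \int_\R \partial_x^{k-1} N_0 \, \partial_x^{k-1} M_0$, and their sum telescopes into $\frac18 \int_\R \big( \partial_x^{k-1}(N_0 + M_0) \big)^2 = \frac12 \int_\R \big( \partial_x^{k-1} v_+ \big)^2$, which is exactly the leading term of $E_{k-1}^{KdV}$ in \eqref{e1KdV}--\eqref{e3KdV}. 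The remaining lower-order terms are then matched by expanding $E_{k-1}^{KdV}(v_+)$ in powers of $N_0$ and $M_0$ and comparing coefficients monomial by monomial. This coefficient bookkeeping, and not the passage to the limit, is the genuinely laborious part of the argument — especially for $k = 4$, where cubic, quartic and quintic terms proliferate — and it is precisely in order to render it tractable that the formulae of Lemmas \ref{Esquale} and \ref{Psquale} are cast in completed-square form.
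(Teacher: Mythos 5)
Your proposal is correct and takes essentially the same route as the paper, whose proof is precisely the terse version of your argument: combine the expansions of Lemmas \ref{Esquale} and \ref{Psquale} with the formulae \eqref{e1KdV}--\eqref{e3KdV}, use the Sobolev embedding theorem to dispose of the $\boO(\eps^2)$ remainders and pass to the limit in each monomial, and conclude by a direct computation modelled on \eqref{limit1}. Your additional structure --- the main-part/remainder splitting, the uniform convergence $m_\eps \to 1$, the parity reduction to the $+$ sign, and the explicit coefficient check for $k=2$ --- merely fills in the details the paper leaves to the reader.
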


\begin{remark}
We believe that Proposition \ref{Limitep} might be extended to higher order \eqref{GP} and \eqref{KdV} invariants, provided one was first able to compute some expressions for them.
\end{remark}

\begin{proof}
Combining the expansions of Lemmas \ref{Esquale} and \ref{Psquale} with \eqref{e1KdV}, \eqref{e2KdV} and \eqref{e3KdV}, and using the Sobolev embedding theorem, the proof reduces to a direct computation similar to the proof of \eqref{limit1}.
\end{proof}

\subsection{$H^k$-estimates for $N_\eps$ and $\partial_x \Theta_\eps$}
\label{Lecontrole}

In the same spirit as Lemma \ref{Kdvcontrol} which allows to bound the $H^k$-norms by the \eqref{KdV} invariants, we next show that the $H^k$-norms of $N_\eps$ and $\partial_x \Theta_\eps$ are controlled by the quantities $\boE_k(N_\eps, \Theta_\eps)$ in the limit $\eps \to 0$. More precisely, we have

\begin{lemma}
\label{Controlhk}
Let $1 \leq k \leq 4$ be given, and assume that there exists some positive constant $A$ such that
\begin{equation}
\label{hypoener}
\boE_j(N_\eps, \Theta_\eps) \leq A,
\end{equation}
for any $1 \leq j \leq k$. Then, there exists some positive numbers $\eps_A$ and $K_A$, possibly depending on $A$, such that
\begin{equation}
\label{bornitude}
\| N_\eps \|_{H^{k-1}(\R)} + \eps \| \partial_x^k N_\eps \|_{L^2(\R)} + \| \partial_x \Theta_\eps \|_{H^{k-1}(\R)} \leq K_A,
\end{equation}
for any $0 < \eps < \eps_A$.
\end{lemma}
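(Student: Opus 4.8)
The plan is to argue by induction on $k$, extracting at each step from the explicit formula for $\boE_k$ in Lemma \ref{Esquale} the three leading positive quadratic terms and absorbing everything else. Before starting the induction I would record a preliminary smallness fact. Since $E(\Psi) = \frac{\eps^3}{18} \boE_1(N_\eps, \Theta_\eps) \leq \frac{\eps^3}{18} A$ by \eqref{sirbu1}, the energy tends to $0$ as $\eps \to 0$. Writing $\eta = 1 - |\Psi|^2$, using $\|\eta\|_{L^2}^2 \leq 4 E$, $\|\partial_\x \Psi\|_{L^2}^2 \leq 2 E$, the bound \eqref{holtz}, and the one-dimensional interpolation $\|\eta\|_{L^\infty}^2 \leq 2 \|\eta\|_{L^2} \|\partial_\x \eta\|_{L^2} \leq 4 \|\eta\|_{L^2} \|\Psi\|_{L^\infty} \|\partial_\x \Psi\|_{L^2}$, I obtain $\|\eta\|_{L^\infty}^2 \leq K E (1 + E)^\frac12$. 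Hence there is $\eps_A > 0$ such that $\|\eta\|_{L^\infty} \leq \frac12$, so that in particular \eqref{borninf} holds (that is $\frac14 \leq m_\eps \leq 4$), for all $0 < \eps < \eps_A$. Since $\frac{\eps^2}{6} N_\eps = \eta$, this also gives $\frac{\eps^2}{6} \|N_\eps\|_{L^\infty} = \|\eta\|_{L^\infty} \leq \frac12$, a fact I would use repeatedly.

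For the base case $k = 1$ I use directly \eqref{sirbu1}, namely
$$8 \boE_1 = \int_\R \big( N_\eps^2 + (\partial_x \Theta_\eps)^2 \big) + \frac{\eps^2}{2} \int_\R \frac{(\partial_x N_\eps)^2}{m_\eps} - \frac{\eps^2}{6} \int_\R N_\eps (\partial_x \Theta_\eps)^2.$$
The last term is controlled by $\frac{\eps^2}{6} \|N_\eps\|_{L^\infty} \|\partial_x \Theta_\eps\|_{L^2}^2 = \|\eta\|_{L^\infty} \|\partial_x \Theta_\eps\|_{L^2}^2 \leq \frac12 \|\partial_x \Theta_\eps\|_{L^2}^2$, while $\frac{1}{m_\eps} \geq \frac14$. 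This yields $\|N_\eps\|_{L^2}^2 + \frac12 \|\partial_x \Theta_\eps\|_{L^2}^2 + \frac{\eps^2}{8} \|\partial_x N_\eps\|_{L^2}^2 \leq 8 A$, which is exactly \eqref{bornitude} for $k = 1$.

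For the inductive step, suppose \eqref{bornitude} holds at order $k - 1$. Lemma \ref{Esquale} exhibits $8 \boE_k$ as a sum of three completed squares, whose leading parts are
$$\int_\R (\partial_x^{k-1} N_\eps)^2, \quad \int_\R m_\eps \big( \partial_x^k \Theta_\eps - \eps^2 (\cdots) \big)^2, \quad \frac{\eps^2}{2} \int_\R \frac{1}{m_\eps} \big( \partial_x^k N_\eps + (\cdots) \big)^2,$$
together with lower-order polynomial terms and the $\eps^2 \int_\R \boR_k$ remainder. Using \eqref{borninf}, the three squares bound from below $\|\partial_x^{k-1} N_\eps\|_{L^2}^2$, then $\frac14 \|\partial_x^k \Theta_\eps - \eps^2(\cdots)\|_{L^2}^2$, and finally $\frac{\eps^2}{8} \|\partial_x^k N_\eps + (\cdots)\|_{L^2}^2$. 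Every remaining term is a monomial in $N_\eps$, $\partial_x \Theta_\eps$ and their derivatives up to order $k$; each either involves only derivatives of order $\leq k - 1$, hence is estimated by the induction hypothesis after the one-dimensional Gagliardo--Nirenberg and Sobolev inequalities, or contains a top-order factor ($\partial_x^{k-1} N_\eps$, $\partial_x^k \Theta_\eps$, or $\partial_x^k N_\eps$) carrying a compensating power of $\eps$ or a small coefficient. In the latter case Young's inequality $2 a b \leq \delta a^2 + \delta^{-1} b^2$ distributes the term between $\delta$ times one of the three leading squares and $C_\delta$ times a lower-order quantity already controlled. Passing from the perturbed squares to the clean norms $\|\partial_x^k \Theta_\eps\|_{L^2}$ and $\eps \|\partial_x^k N_\eps\|_{L^2}$ by the triangle inequality (the perturbations $\eps^2 (\cdots)$ being absorbable for the same reasons), and choosing $\delta$ small and then $\eps_A$ small, I obtain \eqref{bornitude} at order $k$.

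The main obstacle is precisely the bookkeeping in this inductive step: one must verify, term by term in the lengthy expressions \eqref{boE2}--\eqref{boE4} and \eqref{R2eps}--\eqref{R4eps}, that no monomial other than the three designated leading squares obstructs the absorption. The delicate point is the $\eps$-scaling. Because the highest $N$-derivative $\partial_x^k N_\eps$ is only controlled with the weight $\eps$, every occurrence of $\partial_x^k N_\eps$, as well as every high power of intermediate derivatives appearing in $\boR_k$ (such as $(\partial_x^2 N_\eps)^3$ or $(\partial_x N_\eps)^6/m_\eps^4$), must be checked to carry enough powers of $\eps$ so that, after Gagliardo--Nirenberg interpolation and Young's inequality, it contributes a coefficient vanishing as $\eps \to 0$. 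This is exactly where the smallness encoded in \eqref{borninf} and the choice of $\eps_A$ enters.
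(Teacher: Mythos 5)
Your proposal is correct and follows essentially the same route as the paper's proof: the base case $k=1$ read off from \eqref{sirbu1}, then induction on $k$, extracting from the expansions of Lemma \ref{Esquale} the three leading completed squares $\int_\R (\partial_x^{k-1} N_\eps)^2$, $\int_\R m_\eps(\partial_x^k \Theta_\eps - \cdots)^2$ and $\frac{\eps^2}{2}\int_\R \frac{1}{m_\eps}(\partial_x^k N_\eps + \cdots)^2$, and absorbing all remaining terms using the inductive bound, the Sobolev embedding and the choice of $\eps_A$ small. The only minor deviation is your derivation of the uniform bounds on $m_\eps$: you use the interpolation inequality $\|\eta\|_{L^\infty}^2 \leq 2\|\eta\|_{L^2}\|\partial_\x \eta\|_{L^2}$ together with \eqref{holtz}, whereas the paper argues by contradiction from the H\"older estimate $|\Psi(\x) - \Psi(\x_0)| \leq \sqrt{2}\,|\x - \x_0|^{1/2} E(\Psi)^{1/2}$ and the potential part of the energy; both yield \eqref{petit} for $\eps$ small, so this is an equally valid variant.
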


\begin{remark}
We again believe that Lemma \ref{Controlhk} might be extended to higher order \eqref{GP} and \eqref{KdV} invariants, which will provide bounds for higher Sobolev norms of $N_\eps$ and $\partial_x \Theta_\eps$.
\end{remark}

\begin{proof}
We split the proof in four steps according to the value of $k$.

\begin{step}
\label{one-way}
$k = 1$.
\end{step}

In view of \eqref{sirbu1}, assumption \eqref{hypoener} may be written as
\begin{equation}
\label{hypoener1}
\int_\R \Big( m_\eps (\partial_x \Theta_\varepsilon)^2 + N_\varepsilon^2 + \frac{\varepsilon^2 (\partial_x N_\varepsilon)^2}{2 m_\eps} \Big) \leq 8 A,
\end{equation}
so that \eqref{bornitude} follows once some lower and upper uniform bounds on $m_\eps$ are established. Indeed, if we can choose $\eps_A$ so that
\begin{equation}
\label{petit}
\frac{1}{2} \leq m_\eps \leq 2,
\end{equation}
for any $0 < \eps < \eps_A$, then \eqref{bornitude} follows from \eqref{hypoener1} with $K_A = 24 \sqrt{A}$. Hence, the proof reduces to show \eqref{petit} for some suitable choice of $\eps_A$.

In order to prove \eqref{petit}, we apply the H\"older inequality and assumption \eqref{hypoener} to obtain
$$|\Psi(x) - \Psi(x_0)| \leq \sqrt{2} |x - x_0|^\frac{1}{2} E(\Psi)^\frac{1}{2} \leq \frac{\eps^\frac{3}{2}}{3} \boE_1(N_\eps, \Theta_\eps)^\frac{1}{2} \leq \frac{\sqrt{A}}{3} \eps^\frac{3}{2},$$
for any point $x_0 \in \R$ and any $x_0 - 1 \leq x \leq x_0 + 1$, so that
$$\big| 1 - |\Psi(x_0)| \big| - \frac{\sqrt{A}}{3} \eps^\frac{3}{2} \leq \big| 1 - |\Psi(x)| \big|.$$
Setting $\eps_A = (16 A)^{- \frac{1}{3}}$, and assuming by contradiction that \eqref{petit} does not hold at the point $x_0$, we obtain that
\begin{equation}
\label{hyporealestate}
\big| 1 - |\Psi(x_0)| \big| = \big| 1 - \sqrt{m_\eps(x_0)} \big| \geq 1 - \frac{1}{\sqrt{2}} \geq \frac{1}{12} = \frac{\sqrt{A}}{3} \eps_A^3 \geq \frac{\sqrt{A}}{3} \eps^3,
\end{equation}
for any $0 < \eps < \eps_A$, so that
$$\Big( \big| 1 - |\Psi(x_0)| \big| - \frac{\sqrt{A}}{3} \eps^\frac{3}{2} \Big)^2 \leq \int_{x_0 - 1}^{x_0 + 1} \big( 1 - |\Psi(x)|^2 \big)^2 dx \leq \frac{2 \eps^3}{9} \boE_1(N_\eps, \Theta_\eps) \leq \frac{2 A}{9} \eps^3,$$
and
$$\big| 1 - |\Psi(x_0)| \big| \leq \sqrt{A} \eps^\frac{3}{2} \leq \frac{1}{4}.$$
It follows that
$$\frac{9}{16} \leq m_\eps(x_0) = |\Psi(x_0)|^2 \leq \frac{25}{16},$$
which gives a contradiction with the fact that \eqref{petit} does not hold at the point $x_0$. This completes the proof of \eqref{petit}, and of Step \ref{one-way}.

\begin{step}
\label{two-ways}
$k = 2$.
\end{step}

Notice first that in view of the inductive nature of assumption \eqref{hypoener}, and of Step \ref{one-way}, we have already established \eqref{bornitude} for $k = 1$. Combining this estimate with the Sobolev embedding theorem, bounds \eqref{petit} and formulae \eqref{boE2} and \eqref{R2eps}, assumption \eqref{hypoener} may be written as
\begin{equation}
\begin{split}
\label{hypoener2}
& \int_\R \bigg( \big( \partial_x N_\eps \big)^2 + \Big( \partial_x^2 \Theta_\eps - \frac{\eps^2}{6 m_\eps} \partial_x N_\eps \partial_x \Theta_\eps \Big)^2 + \eps^2 \Big( \partial_x^2 N_\eps + \frac{m_\eps}{6} \big( \partial_x \Theta_\eps \big)^2 + \frac{\eps^2}{12 m_\eps} (\partial_x N_\eps)^2 \Big)^2 \bigg)\\
& \leq K \Big( 1 + \| N_\eps \|_{H^1(\R)} \big( \| N_\eps \|_{L^2(\R)}^2 + \| \partial_x \Theta_\eps \|_{L^2(\R)}^2 + \eps^2 \| \partial_x N_\eps \|_{L^2(\R)}^2 \big) \Big) \leq K_A \Big( 1 + \| N_\eps \|_{H^1(\R)} \Big).
\end{split}
\end{equation}
This first gives that
\begin{equation}
\label{unicredit1}
\int_\R \big( \partial_x N_\eps \big)^2 \leq K_A,
\end{equation}
so that by \eqref{hypoener2} and the Sobolev embedding theorem,
$$\| \partial_x^2 \Theta_\eps \|_{L^2(\R)} \leq K \Big( 1 + \eps^2 \| \partial_x N_\eps \partial_x \Theta_\eps \|_{L^2(\R)} \Big) \leq K_A \Big( 1 + \eps^2 \| \partial_x \Theta_\eps \|_{H^1(\R)} \Big).$$
Hence, we obtain
$$\int_\R \big( \partial_x^2 \Theta_\eps \big)^2 \leq K_A,$$
setting $\eps_A$ sufficiently small. In view of \eqref{hypoener2}, \eqref{unicredit1} and the Sobolev embedding theorem, it follows that
$$\eps \| \partial_x^2 N_\eps \|_{L^2(\R)} \leq K \Big( 1 + \eps \| \partial_x \Theta_\eps \|_{L^4(\R)}^2 + \eps^3 \| \partial_x N_\eps \|_{H^1(\R)} \| \partial_x N_\eps \|_{L^2(\R)} \Big) \leq K_A \Big( 1 + \eps^3 \| \partial_x N_\eps \|_{H^1(\R)} \Big),$$
which completes the proof of \eqref{bornitude} choosing $\eps_A$ sufficiently small.

\begin{step}
\label{freeway}
$k = 3$.
\end{step}

Notice again that in view of the inductive nature of assumption \eqref{hypoener}, and of Step \ref{two-ways}, we have already established \eqref{bornitude} for $k = 2$. Combining this estimate with the Sobolev embedding theorem, bounds \eqref{petit} and formulae \eqref{boE3} and \eqref{R3eps}, assumption \eqref{hypoener} may be written as
\begin{align*}
& \int_\R \bigg( \big( \partial_x^2 N_\eps \big)^2 + \Big( \partial_x^3 \Theta_\eps - \frac{\eps^2}{72} \big( \partial_x \Theta_\eps \big)^3 - \frac{\eps^2 \partial_x N_\eps \partial_x^2 \Theta_\eps}{4 m_\eps} - \frac{\eps^2 \partial_x^2 N_\eps \partial_x \Theta_\eps}{4 m_\eps} - \frac{\eps^4 (\partial_x N_\eps)^2 \partial_x \Theta_\eps}{48 m_\eps^2} \Big)^2\\
& + \eps^2 \Big( \partial_x^3 N_\eps - \frac{\eps^2}{24} \partial_x N_\eps \big( \partial_x \Theta_\eps \big)^2 + \frac{m_\eps}{2} \partial_x \Theta_\eps \partial_x^2 \Theta_\eps + \frac{\eps^2}{4 m_\eps} \partial_x N_\eps \partial_x^2 N_\eps + \frac{\eps^4}{48 m_\eps^2} \big( \partial_x N_\eps \big)^3 \Big)^2 \bigg)\\
& \leq K_A \Big( 1 + \| \partial_x N_\eps \|_{H^1(\R)} + \| \partial_x^2 \Theta_\eps \|_{H^1(\R)} + \eps \| \partial_x^3 N_\eps \|_{L^2(\R)} \Big).
\end{align*}
Invoking once again estimates \eqref{bornitude} (for $k = 2$) and \eqref{petit} to bound the remainder terms in the above integral, we are led to
$$\int_\R \bigg( \big( \partial_x^2 N_\eps \big)^2 + \big( \partial_x^3 \Theta_\eps \big)^2 + \eps^2 \big( \partial_x^3 N_\eps \big)^2 \Big) \leq K_A \Big( 1 + \| \partial_x N_\eps \|_{H^1(\R)} + \| \partial_x^2 \Theta_\eps \|_{H^1(\R)} + \eps \| \partial_x^3 N_\eps \|_{L^2(\R)} \Big),$$
which provides the proof of Step \ref{freeway}.

\begin{step}
\label{forwhat}
$k = 4$.
\end{step}

Notice once last time that, in view of the inductive nature of assumption \eqref{hypoener}, and of Step \ref{two-ways}, we have already established \eqref{bornitude} for $k = 3$. Combining this estimate with the Sobolev embedding theorem, bounds \eqref{petit} and formulae \eqref{boE4} and \eqref{R4eps}, assumption \eqref{hypoener} may be written as
\begin{align*}
& \int_\R \bigg( \big( \partial_x^3 N_\eps \big)^2 + \Big( \partial_x^4 \Theta_\eps - \frac{\eps^2}{2 m_\eps} \partial_x^2 N_\eps \partial_x^2 \Theta_\eps - \frac{\eps^2}{3 m_\eps} \partial_x N_\eps \partial_x^3 \Theta_\eps - \frac{\eps^2}{3 m_\eps} \partial_x^3 N_\eps \partial_x \Theta_\eps\\
& - \frac{\eps^2}{12} \big( \partial_x \Theta_\eps \big)^2 \partial_x^2 \Theta_\eps - \frac{\eps^4}{24 m_\eps^2} \big( \partial_x N_\eps \big)^2 \partial_x^2 \Theta_\eps - \frac{\eps^4}{12 m^2} \partial_x N_\eps \partial_x^2 N_\eps \partial_x \Theta_\eps + \frac{\eps^4}{216 m} \partial_x N_\eps \big( \partial_x \Theta_\eps \big)^3\\
& - \frac{\eps^6}{144 m_\eps^3} \big( \partial_x N_\eps \big)^3 \partial_x \Theta_\eps \Big)^2 + \eps^2 \int_\R \Big( \partial_x^4 N_\eps + \frac{m_\eps}{2} \big( \partial_x^2 \Theta_\eps \big)^2 + \frac{2}{3} m_\eps \partial_x \Theta_\eps \partial_x^3 \Theta_\eps + \frac{\eps^2}{3 m_\eps} \partial_x N_\eps \partial_x^3 N_\eps\\
& + \frac{\eps^2}{4 m_\eps} \big( \partial_x^2 N_\eps \big)^2 - \frac{\eps^2}{12} \partial_x^2 N_\eps \big( \partial_x \Theta_\eps \big)^2 - \frac{\eps^2}{6} \partial_x N_\eps \partial_x \Theta_\eps \partial_x^2 \Theta_\eps - \frac{\eps^2}{432} m_\eps \big( \partial_x \Theta_\eps \big)^4\\
& - \frac{\eps^4}{144 m_\eps} \big( \partial_x N_\eps \big)^2 \big( \partial_x \Theta_\eps \big)^2 + \frac{\eps^4}{8 m_\eps^2} \big( \partial_x N_\eps \big)^2 \partial_x^2 N_\eps + \frac{5 \eps^6}{576 m_\eps^3} \big( \partial_x N_\eps \big)^4 \Big)^2\\
& \leq K_A \bigg( 1 + \| \partial_x^2 N_\eps \|_{H^1(\R)} + \| \partial_x^3 \Theta_\eps \|_{H^1(\R)} \bigg),
\end{align*}
so that we similarly obtain
$$\int_\R \bigg( \big( \partial_x^3 N_\eps \big)^2 + \big( \partial_x^4 \Theta_\eps \big)^2 \bigg) \leq K_A,$$
then, combining with the Sobolev embedding theorem, we also have
$$\eps \| \partial_x^4 N_\eps \|_{L^2(\R)} \leq K_A.$$
This completes the proofs of Step \ref{forwhat} and Lemma \ref{Controlhk}.
\end{proof}

An important consequence of Lemma \ref{Controlhk} which refines the result of Lemma \ref{Limitep} is

\begin{prop}
\label{Controluv}
Let $1 \leq k \leq 3$. Given some positive constant $A$, consider some functions $N_\eps$ and $\partial_x \Theta_\eps$ which satisfy \eqref{hypoener} for any $1 \leq j \leq k + 1$. Then, there exists some positive numbers $\eps_A$ and $K_A$, possibly depending on $A$, such that
\begin{equation}
\label{bornage}
\Big| \boE_k(N_\eps, \Theta_\eps) \pm \sqrt{2} \boP_k(N_\eps, \Theta_\eps) - E_{k-1}^{KdV} \Big( \frac{N_\eps \pm \partial_x \Theta_\eps}{2} \Big) \Big| \leq K_A \eps^2.
\end{equation}
for any $0 < \eps < \eps_A$.
\end{prop}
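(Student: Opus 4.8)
The plan is to deduce \eqref{bornage} directly from the explicit expansions of Lemmas \ref{Esquale} and \ref{Psquale}, by isolating the terms that carry an explicit factor of $\eps^2$ and bounding them by means of uniform a priori estimates. First I would exploit the strengthened hypothesis: since \eqref{hypoener} is assumed for every $1 \leq j \leq k+1$, I may apply Lemma \ref{Controlhk} at the index $k+1$ (rather than $k$) to obtain positive numbers $\eps_A$ and $K_A$ such that
\begin{equation*}
\| N_\eps \|_{H^k(\R)} + \eps \| \partial_x^{k+1} N_\eps \|_{L^2(\R)} + \| \partial_x \Theta_\eps \|_{H^k(\R)} \leq K_A,
\end{equation*}
for $0 < \eps < \eps_A$, together with the pointwise bounds $\frac{1}{2} \leq m_\eps \leq 2$ from \eqref{petit}. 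By the Sobolev embedding theorem, every monomial occurring in the expansions can then be estimated by placing one top-order factor in $L^2(\R)$ and all the remaining factors (lower-order derivatives and powers of $m_\eps^{-1}$) in $L^\infty(\R)$, each such factor being controlled by $K_A$.

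Next I would split the right-hand sides of the formulae for $\boE_k$ and $\boP_k$ into a principal part, obtained by formally setting $\eps = 0$ (that is, replacing $m_\eps$ by $1$ and discarding every monomial prefactored by a positive power of $\eps$, in particular the whole integrals $\frac{\eps^2}{16} \int_\R \frac{1}{m_\eps}(\cdots)^2$ in $\boE_k$, and the remainders $\int_\R \boR_k$ and $\int_\R r_k$), and a remainder collecting all the $\eps$-carrying contributions. The core of the argument is to bound this remainder by $K_A \eps^2$. The decisive observation is that the highest-order derivative appearing with an explicit $\eps^2$ is $\partial_x^k N_\eps$, reached only inside the term $\frac{\eps^2}{16} \int_\R \frac{1}{m_\eps}(\partial_x^k N_\eps + \cdots)^2$ of $\boE_k$; every other $\eps^2$-monomial (those in $\boR_2$, $\boR_3$, $r_2$, $r_3$, the cross terms produced when the squares in the principal integrals are expanded, and the contribution $\eps^2 N_\eps$ coming from $m_\eps = 1 - \frac{\eps^2}{6} N_\eps$) involves only derivatives of order at most $k$ of $N_\eps$ and of $\partial_x \Theta_\eps$. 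Since $\| \partial_x^k N_\eps \|_{L^2(\R)} \leq K_A$ by the estimate above, one has $\eps^2 \int_\R (\partial_x^k N_\eps)^2 \leq K_A^2 \eps^2$, and the remaining monomials are all $O(\eps^2)$ by the $L^2$--$L^\infty$ splitting just described.

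It is precisely here that the main obstacle lies, and also the reason the hypothesis is imposed for $j \leq k+1$ rather than $j \leq k$: applying Lemma \ref{Controlhk} only at the index $k$ would yield merely $\eps \| \partial_x^k N_\eps \|_{L^2(\R)} \leq K_A$, i.e. a bound on $\| \partial_x^k N_\eps \|_{L^2(\R)}$ of size $O(\eps^{-1})$, so that $\eps^2 \int_\R (\partial_x^k N_\eps)^2$ would only be $O(1)$; the extra invariant bound is exactly what upgrades this to $O(\eps^2)$, and verifying this gain of one derivative is the crux of the proof. To conclude, it remains to identify the principal part of $\boE_k \pm \sqrt{2} \boP_k$ with $E_{k-1}^{KdV}\big( \frac{N_\eps \pm \partial_x \Theta_\eps}{2} \big)$. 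This is an exact polynomial identity in $N_\eps$, $\partial_x \Theta_\eps$ and their derivatives, established by the same direct computation that underlies Proposition \ref{Limitep} and formula \eqref{limit1}, now read as an algebraic identity between integrands rather than as a limit, using \eqref{e0KdV}, \eqref{e1KdV} and \eqref{e2KdV}. Combining this identity with the remainder bound of the previous paragraph yields \eqref{bornage} for $0 < \eps < \eps_A$.
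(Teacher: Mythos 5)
Your proposal is correct and follows essentially the same route as the paper: both identify the difference $\boE_k \pm \sqrt{2}\,\boP_k - E_{k-1}^{KdV}$ as an explicitly $\eps^2$-prefactored remainder read off from the expansions of Lemmas \ref{Esquale} and \ref{Psquale}, and both bound that remainder by invoking Lemma \ref{Controlhk} at the index $k+1$ (which is exactly why the hypothesis \eqref{hypoener} is assumed for $1 \leq j \leq k+1$) together with the bounds \eqref{petit} and the Sobolev embedding theorem. Your observation that the term $\eps^2 \int \frac{1}{m_\eps} (\partial_x^k N_\eps + \cdots)^2$ is the obstruction forcing the gain of one derivative is precisely the point the paper handles by citing \eqref{bornitude} ``for $k+1$'' in each case.
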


\begin{remark}
Similarly to Lemma \ref{Controlhk}, we believe that Proposition \ref{Controluv} might be extended to higher order \eqref{GP} and \eqref{KdV} invariants.
\end{remark}

\begin{proof}
Let $k = 1$. In view of \eqref{sirbu1}, \eqref{sirbu2} and \eqref{e0KdV}, we have
$$\boE_1(N_\eps, \Theta_\eps) \pm \sqrt{2} \boP_1(N_\eps, \Theta_\eps) - E_0^{KdV} \Big( \frac{N_\eps \pm \partial_x \Theta_\eps}{2} \Big) = \frac{\eps^2}{2} \int_\R \bigg( \frac{(\partial_x N_\eps)^2}{1 - \frac{\eps^2}{6} N_\eps} - \frac{1}{3} N_\eps \big( \partial_x \Theta_\eps \big)^2 \bigg).$$
Inequality \eqref{bornage} follows for $\eps_A$ sufficiently small, invoking \eqref{bornitude} (for $k = 2$) and \eqref{petit}.

For $k = 2$, we deduce from \eqref{boE2}, \eqref{R2eps} and \eqref{e1KdV} that
\begin{align*}
& \boE_2(N_\eps, \Theta_\eps) \pm \sqrt{2} \boP_2(N_\eps, \Theta_\eps) - E_1^{KdV} \Big( \frac{N_\eps \pm \partial_x \Theta_\eps}{2} \Big)\\
= \frac{\eps^2}{8} \int_\R \bigg( \frac{1}{12} N_\eps^2 \big( \partial_x \Theta_\eps \big)^2 & - \frac{1}{3} \partial_x N_\eps \partial_x \Theta_\eps \partial_x^2 \Theta_\eps - \frac{1}{6} N_\eps \big( \partial_x^2 \Theta_\eps \big)^2 \pm \frac{1}{36} N_\eps \big( \partial_x \Theta_\eps \big)^3 - \frac{N_\eps (\partial_x N_\eps)^2}{4 m_\eps}\\
\mp \frac{(\partial_x N_\eps)^2 \partial_x \Theta_\eps}{4 m_\eps} + & \frac{\eps^2 (\partial_x N_\eps)^2 (\partial_x \Theta_\eps)^2}{36 m_\eps} + \frac{1}{2 m_\eps} \Big( \partial_x^2 N_\eps + \frac{m_\eps}{6} \big( \partial_x \Theta_\eps \big)^2 + \frac{\eps^2}{12 m_\eps} \big( \partial_x N_\eps \big)^2 \Big)^2 \bigg),
\end{align*}
so that \eqref{bornage} follows again from \eqref{bornitude} (for $k = 3$) and \eqref{petit}.

Similarly, the proof of \eqref{bornage} for $k = 3$ reduces to estimate the remainder terms in \eqref{boE3} and \eqref{R3eps} using \eqref{bornitude} (for $k = 4$) and \eqref{petit}.
\end{proof}

\section{Time-independent estimates}
\label{Notime}

In this section, we use the above conservation laws to derive time-independent estimates of the functions $U_\eps$ and $V_\eps$, together with the consistency of the solutions to \eqref{GP} with the \eqref{KdV} equation in the limit $\eps \to 0$. This yields the proofs of Proposition \ref{H3-control} and Theorem \ref{Bobby}.

\subsection{Proof of Proposition \ref{H3-control}}
\label{Cestpresquefini}

Given any functions $N_\eps^0$ and $\Theta_\eps^0$ such that \eqref{grinzing1} holds, it follows from the formulae of Lemmas \ref{Esquale} and \ref{Psquale} that there exists some positive constant $A_0$, which does not depend on $\varepsilon$, such that
\begin{equation}
\label{hypoenerzero}
\boE_k(N_\eps^0, \Theta_\eps^0) \leq A_0,
\end{equation}
for any $1 \leq k \leq 4$. In view of Theorem \ref{Invconserved} and definition \eqref{slowEk}, we deduce that the solution $(N_\eps(\cdot, \tau), \Theta_\eps(\cdot, \tau))$ to system \eqref{slow1-0}-\eqref{slow2-0} with initial datum $(N_\eps^0, \Theta_\eps^0)$ satisfies
$$\boE_k(N_\eps(\cdot, \tau), \Theta_\eps(\cdot, \tau)) \leq A_0,$$
for any time $\tau \in \R$. In particular, inequality \eqref{grinzing1bis} is a direct consequence of Lemma \ref{Controlhk}, whereas in view of Proposition \ref{Controluv}, we have
$$\Big| \boE_k(N_\eps(\cdot, \tau), \Theta_\eps(\cdot, \tau)) \pm \sqrt{2} \boP_k(N_\eps(\cdot, \tau), \Theta_\eps(\cdot, \tau)) - E_{k-1}^{KdV} \Big( \frac{N_\eps(\cdot, \tau) \pm \partial_x \Theta_\eps(\cdot, \tau)}{2} \Big) \Big| \leq K_{A_0} \eps^2,$$
for any time $\tau \in \R$. Using again the conservation of $E_k$ and $p_k$ provided by Theorem \ref{Invconserved} and Corollary \ref{Conspk}, and definitions \eqref{slowEk} and \eqref{slowpk}, we are led to
$$\Big| \boE_k(N_\eps^0, \Theta_\eps^0) \pm \sqrt{2} \boP_k(N_\eps^0, \Theta_\eps^0) - E_{k-1}^{KdV} \Big( \frac{N_\eps(\cdot, \tau) \pm \partial_x \Theta_\eps(\cdot, \tau)}{2} \Big) \Big| \leq K_{A_0} \eps^2.$$
Invoking \eqref{hypoenerzero}, we apply once more Proposition \ref{Controluv} to obtain
\begin{equation}
\label{lloyds}
\Big| E_{k-1}^{KdV} \Big( \frac{N_\eps^0 \pm \partial_x \Theta_\eps^0}{2} \Big) - E_{k-1}^{KdV} \Big( \frac{N_\eps(\cdot, \tau) \pm \partial_x \Theta_\eps(\cdot, \tau)}{2} \Big) \Big| \leq K_{A_0} \eps^2.
\end{equation}
For $k = 1$, we then deduce from \eqref{e2KdV} that
\begin{equation}
\label{abbey}
\| N_\eps(\cdot, \tau) \pm \partial_x \Theta_\eps(\cdot, \tau) \|_{L^2(\R)} \leq \| N_\eps^0 \pm \partial_x \Theta_\eps^0 \|_{L^2(\R)} + K_{A_0} \eps,
\end{equation}
so that in particular, we have by \eqref{grinzing1} for $\eps$ sufficiently small,
$$\| N_\eps(\cdot, \tau) \pm \partial_x \Theta_\eps(\cdot, \tau) \|_{L^2(\R)} \leq K_{A_0},$$
where $K_{A_0}$ denotes some further constant depending only on $A_0$. Hence, for $k = 2$, we may write using \eqref{bornation2}, that
$$\big\| \partial_x N_\eps(\cdot, \tau) \pm \partial_x^2 \Theta_\eps(\cdot, \tau) \big\|_{L^2} \leq K_{A_0} \Big( \Big| E_1^{KdV} \Big( \frac{N_\eps(\cdot, \tau) \pm \partial_x \Theta_\eps(\cdot, \tau)}{2} \Big) \Big| + \| N_\eps(\cdot, \tau) \pm \partial_x \Theta_\eps(\cdot, \tau) \|_{L^2} \Big),$$
so that by \eqref{bornation1}, \eqref{lloyds} and \eqref{abbey},
$$\| \partial_x N_\eps(\cdot, \tau) \pm \partial_x^2 \Theta_\eps(\cdot, \tau) \|_{L^2(\R)} \leq K_{A_0} \Big( \| N_\eps^0 \pm \partial_x \Theta_\eps^0 \|_{H^1(\R)} + \eps \Big).$$
Using repetitively this argument to estimate the $L^2$-norms of the functions $\partial_x^2 N_\eps(\cdot, \tau) \pm \partial_x^3 \Theta_\eps(\cdot, \tau)$ and $\partial_x^3 N_\eps(\cdot, \tau) \pm \partial_x^4 \Theta_\eps(\cdot, \tau)$, we are led to \eqref{dobling1bis}, which completes the proof of Proposition \ref{H3-control}.

\subsection{Proof of Theorem \ref{Bobby}}
\label{bobbydone}

Theorem \ref{Bobby} is a consequence of Proposition \ref{H3-control}. Applying estimates \eqref{dobling1bis} to the right-hand side of \eqref{slow1}, together with the Sobolev embedding theorem, we obtain estimate \eqref{jerrard}.

\section{Energy methods}
\label{Expansion}

This section is devoted to the proofs of Theorems \ref{cochon} and \ref{H3-controlbis}, which both rely on applying standard energy methods to equations \eqref{slow1} and \eqref{slow2}.

\subsection{Proof of Theorem \ref{H3-controlbis}}
\label{Veve}

In order to estimate the $L^2$-norm of $V_\eps(\cdot, \tau)$, we multiply equation \eqref{slow2} by $V_\eps(\cdot, \tau)$ and integrate by parts. In order to simplify the presentation, we recast equation \eqref{slow2} as
\begin{equation}
\label{soleillevant}
\partial_\tau V_\varepsilon + \frac{8}{\varepsilon^2} \partial_x V_\varepsilon = \frac{1}{2} \partial_x (V_\varepsilon^2) + \partial_x f_\eps + \eps^2 R_\eps,
\end{equation}
where
$$f_\eps = \partial^2_x N_\varepsilon - \frac{1}{6} U_\varepsilon^2 - \frac{1}{3} U_\varepsilon V_\varepsilon,$$
and $R_\eps$ is defined in \eqref{grouin}. We are led to
$$\partial_\tau \bigg( \int_\R V_\eps(\cdot, \tau)^2 \bigg) = -2 \int_\R f_\eps \partial_x V_\eps(\cdot, \tau) + 2 \varepsilon^2 \int_\R R_\varepsilon(\cdot, \tau) V_\eps(\cdot, \tau).$$
We now integrate with respect to the time variable to obtain
\begin{equation}
\label{marseille}
\int_\R \big( V_\eps(\cdot, \tau) \big)^2 = \int_\R \big( V_\eps^0 \big)^2 -2 \int_0^\tau \int_\R f_\eps \partial_x V_\eps + 2 \varepsilon^2 \int_0^\tau \int_\R R_\varepsilon V_\eps.
\end{equation}
Combining inequalities \eqref{grinzing1bis} with definition \eqref{grouin} and bound \eqref{borninf} and using the Sobolev embedding theorem, we next have
\begin{equation}
\label{aubagne}
\| U_\eps(\cdot, \tau) \|_{H^3(\R)} + \| V_\eps(\cdot, \tau) \|_{H^3(\R)} + \| R_\eps(\cdot, \tau) \|_{L^2(\R)}+ \| f_\eps(\cdot, \tau) \|_{H^1(\R)} \leq K,
\end{equation}
for any $\tau \in \R$ and some positive constant $K$ not depending on $\eps$. In particular,
\begin{equation}
\label{pinard}
\bigg| 2 \eps^2 \int_0^\tau \int_\R R_\eps V_\eps \bigg| \leq C \eps^2 \bigg| \int_0^\tau \| V_\eps(\cdot, s) \|_{L^2(\R)} ds \bigg|,
\end{equation}
where $C = C(K)$ does not depend on $\eps$. In order to bound the second term in the right-hand side of \eqref{marseille}, we replace the quantity $\partial_x V_\eps$ in \eqref{marseille} according to \eqref{soleillevant}, so that
\begin{align*}
\int_0^\tau \int_\R f_\eps \partial_x V_\eps & = \frac{\eps^2}{8} \int_0^\tau \int_\R f_\eps \Big( - \partial_\tau V_\eps + \frac{1}{2} \partial_x (V_\eps^2) + \partial_x f_\eps +\eps^2 R_\eps \Big)\\
& \equiv J_1 + J_2 + J_3 + J_4.
\end{align*}
We bound each of the terms $J_k$ separately. First note that the integrand being a differential, $J_3 = 0$. Next, it follows from \eqref{aubagne} that
\begin{equation}
\label{lia}
|J_4| \leq C \eps^4 \tau.
\end{equation}
Concerning $J_2$, we have
\begin{equation}
\label{stan}
|J_2| = \bigg| \frac{\eps^2}{16} \int_0^\tau \int_\R f_\eps \partial_x (V_\eps^2) \bigg| = \bigg| \frac{\eps^2}{16} \int_0^\tau \int_\R \partial_x f_\eps V_\eps^2 \bigg| \leq C \eps^2 \bigg| \int_0^\tau \| V_\eps(\cdot, s) \|_{L^2(\R)} ds \bigg|.
\end{equation}
For $J_1$, we perform an integration by parts with respect to the time variable, so that
\begin{equation}
\label{persil}
J_1 = \frac{\eps^2}{8} \int_0^\tau \int_\R \partial_\tau f_\eps V_\eps - \frac{\eps^2}{8} \bigg[ \int_\R f_\eps V_\eps \bigg]^\tau_0.
\end{equation}
Note that by \eqref{slow1-0}, \eqref{slow1}, \eqref{grinzing1bis} and \eqref{aubagne},
\begin{align*}
\partial_\tau f_\eps & = \partial_x^2 \partial_\tau N_\eps - \frac{1}{3} U_\eps \partial_\tau U_\eps - \frac{1}{3} \partial_\tau U_\eps V_\eps - \frac{1}{3} U_\eps \partial_\tau V_\eps\\
& = - \frac{4}{\eps^2} \partial^3_x V_\eps - \frac{1}{3} U_\eps \partial_\tau V_\eps + \boO(1)
\end{align*}
uniformly in $L^2(\R)$, so that
\begin{equation}
\label{ariel}
\bigg| \frac{\eps^2}{8} \int_0^\tau \int_\R \partial_\tau f_\eps V_\eps \bigg| \leq \bigg| \frac{\eps^2}{48} \int_0^\tau \int_\R U_\eps \partial_\tau(V_\eps)^2 \bigg| + C \eps^2 \bigg| \int_0^\tau \| V_\eps(\cdot, s) \|_{L^2(\R)} ds \bigg|.
\end{equation}
A further integration by parts in time leads to
\begin{equation}
\begin{split}
\label{omo}
\frac{\eps^2}{48} \int_0^\tau \int_\R U_\eps \partial_\tau (V_\eps)^2 = - \frac{\eps^2}{48} \int_0^\tau \int_\R (\partial_\tau U_\eps) V_\eps^2 + \frac{\eps^2}{48} \bigg[ \int_\R U_\eps V_\eps^2 \bigg]^\tau_0,
\end{split}
\end{equation}
and since $\partial_\tau U_\eps$ is uniformly bounded in $L^2(\R)$ by \eqref{slow1}, \eqref{grinzing1bis} and \eqref{aubagne}, we obtain, combining \eqref{persil}, \eqref{ariel} and \eqref{omo},
\begin{equation}
\label{filou}
J_1 \leq C \eps^2 \bigg( \| V_\eps(\cdot, 0) \|_{L^2(\R)} + \| V_\eps(\cdot, \tau) \|_{L^2(\R)} + \bigg| \int_0^\tau \| V_\eps(\cdot, s) \|_{L^2(\R)} ds \bigg| \bigg).
\end{equation}
Finally, combining \eqref{marseille}, \eqref{pinard}, \eqref{lia}, \eqref{stan} and \eqref{filou}, we obtain
$$\| V_\eps(\cdot, \tau) \|_{L^2}^2 \leq \| V_\eps(\cdot, 0) \|_{L^2}^2 + C \eps^2 \bigg( \eps^2 \tau + \| V_\eps(\cdot, 0) \|_{L^2} + \| V_\eps(\cdot, \tau) \|_{L^2} + \bigg| \int_0^\tau \| V_\eps(\cdot, s) \|_{L^2} ds \bigg| \bigg).$$
The proof of Theorem \ref{H3-controlbis} then follows by the Gronwall lemma.

\subsection{Proof of Theorem \ref{cochon}}
\label{Uhu}

We first recall the equation \eqref{slow1} satisfied by $U_\eps$, namely
$$\partial_\tau U_\varepsilon + \partial_x^3 U_\varepsilon + U_\varepsilon \partial_x U_\varepsilon = - \partial_x^3 V_\varepsilon + \frac{1}{3} \partial_x \Big( U_\varepsilon V_\varepsilon + \frac{V_\varepsilon^2}{2} \Big) - \varepsilon^2 R_{\varepsilon},$$
and take the difference with the \eqref{KdV} equation
$$\partial_\tau \boN_\eps + \partial_x^3 \boN_\eps + \boN_\eps \partial_x \boN_\eps = 0,$$
so that $Z_\eps \equiv U_\eps - \boN_\eps$ satisfies the equation
\begin{equation}
\label{dede}
\partial_\tau Z_\eps + \partial_x^3 Z_\eps + Z_\eps \partial_x U_\eps + \boN_\eps \partial_x Z_\eps = - \partial_x^3 V_\varepsilon + \frac{1}{3} \partial_x \Big( U_\varepsilon V_\varepsilon + \frac{V_\varepsilon^2}{2} \Big) - \varepsilon^2 R_{\varepsilon}.
\end{equation}
We multiply \eqref{dede} by $Z_\eps$, integrate on $\R$ and perform an integration by parts to obtain
\begin{align*}
& \partial_\tau \| Z_\eps \|_{L^2(\R)}^2 \leq K \big( \| \partial_x U_\eps \|_{L^\infty(\R)} + \| \partial_x \boN_\eps \|_{L^\infty(\R)} \big) \| Z_\eps \|^2_{L^2(\R)}\\
+ K & \| Z_\eps \|_{L^2(\R)} \Big( \| V_\eps \|_{H^3(\R)} + \| V_\eps \|_{L^2(\R)} \big( \| U_\eps \|_{H^1(\R)} + \| V_\eps \|_{H^1(\R)} \big) + \eps^2 \| R_\eps \|_{L^2(\R)} \Big).
\end{align*}
Using bounds \eqref{aubagne} for $U_\eps$, $V_\eps$ and $R_\eps$, and the bound of $\boN_\eps$ in $H^3(\R)$ which follows from the integrability theory of \eqref{KdV}, we are led to
$$\partial_\tau \| Z_\eps \|_{L^2(\R)}^2 \leq K \| Z_\eps \|_{L^2(\R)}^2 + K \| Z_\eps \|_{L^2(\R)} \big( \eps^2 + \| V_\eps \|_{H^3(\R)} \big).$$
Finally, we invoke Proposition \ref{H3-control} to assert
$$\partial_\tau \| Z_\eps \|_{L^2(\R)}^2 \leq K \| Z_\eps \|_{L^2(\R)}^2 + K \| Z_\eps \|_{L^2(\R)} \big( \eps + \| V_\eps(\cdot, 0) \|_{H^3(\R)} \big),$$
so that by the Gronwall lemma,
\begin{equation}
\label{mittal}
\| Z_\eps(\cdot, \tau) \|_{L^2(\R)} \leq \| Z_\eps(\cdot, 0) \|_{L^2(\R)} + K \big( \eps + \| V_\eps(\cdot, 0) \|_{H^3(\R)} \big) \exp(K \tau).
\end{equation}
On the other hand, at time $\tau = 0$, since $\boN_\eps(\cdot, 0) = N_\eps(\cdot, 0)$, we have
\begin{equation}
\label{arcelor}
\| Z_\eps(\cdot, 0) \|_{L^2(\R)} = \| U_\eps(\cdot,0) - \boN_\eps(\cdot,0) \|_{L^2(\R)}= \| V_\eps(\cdot, 0)\|_{L^2(\R)},
\end{equation}
whereas at positive time, by definition of $V_\eps$, we have
\begin{equation}
\begin{split}
\label{cigale}
\| N_\eps(\cdot, \tau) - \boN_\eps(\cdot, \tau) \|_{L^2(\R)} & \leq \| Z_\eps(\cdot, \tau) \|_{L^2(\R)} + \| V_\eps(\cdot, \tau) \|_{L^2(\R)}\\
& \leq \| Z_\eps(\cdot, \tau) \|_{L^2(\R)} + \| V_\eps(\cdot, 0) \|_{L^2(\R)} + K \eps^2 |\tau|,
\end{split}
\end{equation}
where we have used Theorem \ref{H3-controlbis}. The conclusion for $N_\eps - \boN_\eps$ then follows from \eqref{mittal}, \eqref{arcelor} and \eqref{cigale}. The proof is similar for $\partial_x \Theta_\eps - \boM_\eps$ considering the function $Y_\eps \equiv U_\eps - \boM_\eps$ instead of $Z_\eps$, so that we omit it.

\bibliographystyle{plain}
\bibliography{Bibliogr}

\end{document}